\documentclass[12pt]{amsart}
\usepackage{a4wide}
\usepackage[T1]{fontenc}
\usepackage{amssymb,amsmath,amsthm,latexsym}
\usepackage{mathrsfs}
\usepackage[usenames,dvipsnames]{color}
\usepackage{euscript}
\usepackage{graphicx}
\usepackage{mdwlist}
\usepackage{enumerate}
\usepackage{enumitem}
\usepackage{xcolor}
\usepackage{mathtools,dsfont,wasysym}
\usepackage{stmaryrd}
\usepackage{ulem}
\usepackage{hyperref}
\usepackage{cancel}
\usepackage{float}

\hypersetup{colorlinks=true, linkcolor=blue, citecolor=red, urlcolor=cyan}

\newtheorem{theorem}{Theorem}[section]
\newtheorem{lemma}[theorem]{Lemma}
\newtheorem{corollary}[theorem]{Corollary}
\newtheorem{cor}[theorem]{Corollary}
\newtheorem{proposition}[theorem]{Proposition}

\newcounter{maintheorem}

\theoremstyle{remark}
\newtheorem{remark}[theorem]{Remark}
\newtheorem{question}[theorem]{Question}

\theoremstyle{definition}
\newtheorem{definition}[theorem]{Definition}

\newtheorem{example}[theorem]{Example}

\numberwithin{equation}{section}
\makeatother




\makeatletter
\setcounter{tocdepth}{3}

\renewcommand{\tocsection}[3]{%
\indentlabel{\@ifnotempty{#2}{\bfseries\ignorespaces#1 #2\quad}}\bfseries#3}
\renewcommand{\tocsubsection}[3]{%
\indentlabel{\@ifnotempty{#2}{\ignorespaces#1 #2\quad}}#3}

\newcommand\@dotsep{4.5}
\def\@tocline#1#2#3#4#5#6#7{\relax
\ifnum #1>\c@tocdepth 
\else
\par \addpenalty\@secpenalty\addvspace{#2}%
\begingroup \hyphenpenalty\@M
\@ifempty{#4}{%
\@tempdima\csname r@tocindent\number#1\endcsname\relax
}{%
\@tempdima#4\relax
}%
\parindent\z@ \leftskip#3\relax \advance\leftskip\@tempdima\relax
\rightskip\@pnumwidth plus1em \parfillskip-\@pnumwidth
#5\leavevmode\hskip-\@tempdima{#6}\nobreak
\leaders\hbox{$\m@th\mkern \@dotsep mu\hbox{.}\mkern \@dotsep mu$}\hfill
\nobreak
\hbox to\@pnumwidth{\@tocpagenum{\ifnum#1=1\bfseries\fi#7}}\par
\nobreak
\endgroup
\fi}
\AtBeginDocument{%
\expandafter\renewcommand\csname r@tocindent0\endcsname{0pt}
}
\def\l@subsection{\@tocline{2}{0pt}{2.5pc}{5pc}{}}
\makeatother

\setlength{\parskip}{.5\baselineskip}%
\setlength{\parindent}{10pt}%

\usepackage{todonotes}


\newcommand{\sna}{\operatorname{SNA}}
\newcommand{\SNA}{\operatorname{SNA}}
\newcommand{\bbr}{\mathbb{R}}
\newcommand{\bbn}{\mathbb{N}}

\newcommand{\N}{\mathbb{N}}

\newcommand{\calf}{\mathcal{F}}
\newcommand{\call}{\mathcal{L}}
\newcommand{\fm}{\calf(M)}
\newcommand{\na}{\operatorname{NA}}
\newcommand{\dens}{\operatorname{dens}}

\newcommand{\pna}{\operatorname{PNA}}
\newcommand{\PNA}{\operatorname{PNA}}

\newcommand{\ldira}{\operatorname{LDirA}}

\newcommand{\dira}{\operatorname{DirA}}

\newcommand{\lipa}{\operatorname{LipA}}

\newcommand{\lip}{\operatorname{Lip}_0}
\newcommand{\Lip}{\operatorname{Lip}_0}
\newcommand{\naf}{\na(\fm)}
\newcommand{\der}{\operatorname{Der}}
\newcommand{\D}{\operatorname{Der}}
\newcommand{\sign}{\operatorname{sign}}
\newcommand{\rr}{(\bbr)}

\newcommand{\F}{\mathcal{F}}
\newcommand{\eps}{\varepsilon}

\DeclareMathOperator{\conv}{conv}

\renewcommand{\phi}{\varphi}
\renewcommand{\epsilon}{\varepsilon}

\renewcommand{\subset}{\subseteq}

\newlength\Colsep
\setlength\Colsep{10pt}

\begin{document}
\title[Embeddings in the sets of norm-attaining Lipschitz functions]{Embeddings of infinite-dimensional spaces in the sets of norm-attaining Lipschitz functions}

\author[Choi]{Geunsu Choi}
\address[Choi]{Department of Mathematics Education, Sunchon National University, 57922 Jeonnam, Republic of Korea \newline
\href{http://orcid.org/0000-0002-4321-1524}{ORCID: \texttt{0000-0002-4321-1524}}}
\email{\texttt{gschoi@scnu.ac.kr}}

\author[Jung]{Mingu Jung}
\address[Jung]{School of Mathematics, Korea Institute for Advanced Study, 02455 Seoul, Republic of Korea\newline
\href{https://orcid.org/0000-0003-2240-2855}{ORCID: \texttt{0000-0003-2240-2855}}}
\email{jmingoo@kias.re.kr}

\author[Lee]{Han Ju Lee}
\address[Lee]{Department of Mathematics Education, Dongguk University, 04620 Seoul, Republic of Korea \newline
\href{https://orcid.org/0000-0001-9523-2987}{ORCID: \texttt{0000-0001-9523-2987}}}
\email{\texttt{hanjulee@dgu.ac.kr}}

\author[Rold\'an]{\'Oscar Rold\'an}
\address[Rold\'an]{Department of Mathematics Education, Dongguk University, 04620 Seoul, Republic of Korea \newline
\href{https://orcid.org/0000-0002-1966-1330}{ORCID: \texttt{0000-0002-1966-1330}}}
\email{\texttt{oscar.roldan@uv.es}}

\keywords{Lipschitz function, metric space, norm-attainment, linear subspaces}
\subjclass[2020]{Primary: 46B04;  Secondary: 46B20, 46B87, 54E50}

\date{\today}                                           


\begin{abstract}
Motivated by the result \cite{DMQR23} that there exist metric spaces for which the set of strongly norm-attaining Lipschitz functions does not contain an isometric copy of $c_0$, we introduce and study a weaker notion of norm-attainment for Lipschitz functions called the pointwise norm-attainment. As a main result, we show that for every infinite metric space $M$, there exists a metric space $M_0 \subseteq M$ such that the set of pointwise norm-attaining Lipschitz functions on $M_0$ contains an isometric copy of $c_0$. 
We also observe that there are countable metric spaces $M$ for which the set of pointwise norm-attaining Lipschitz functions contains an isometric copy of $\ell_\infty$, which is a result that does not hold for the set $\sna(M)$ of strongly norm-attaining Lipschitz functions. Several new results on $c_0$-embedding and $\ell_1$-embedding into the set $\sna(M)$ are presented as well. In particular, we show that if $M$ is a subset of an $\bbr$-tree containing all the branching points, then $\sna(M)$ contains $c_0$ isometrically. As a related result, we provide an example of metric space $M$ for which the set of norm-attaining functionals on the Lipschitz-free space over $M$ cannot contain an isometric copy of $c_0$. 
Finally, we compare the concept of pointwise norm-attainment with the several different kinds of norm-attainment from the literature.
\end{abstract}

\maketitle

\hypersetup{linkcolor=black}

\makeatletter \def\l@subsection{\@tocline{2}{0pt}{1pc}{5pc}{}} \def\l@subsection{\@tocline{2}{0pt}{3pc}{6pc}{}} \makeatother

\tableofcontents

\hypersetup{linkcolor=blue}

\section{Introduction}

In this article, a new norm-attainment notion for Lipschitz functions, namely the \textit{pointwise norm-attainment}, will be introduced and studied. On top of that, we will discuss spaceability questions for sets of real Lipschitz functions that attain their norms in several different ways.

We use standard notations and terminology for Banach spaces (see, for instance, \cite{FHHMZ11}). In particular, for a real Banach space $X$, let $X^*$, $B_X$, and $S_X$ respectively denote its topological dual, its closed unit ball, and its unit sphere. Given two Banach spaces $X$ and $Y$, let $\call(X, Y)$ denote the space of bounded and linear operators from $X$ to $Y$. Given an operator $T\in\call(X, Y)$, we say that it is \textit{norm-attaining} if there exists some point $x\in S_X$ such that $\|T(x)\|=\|T\|$. The set of norm-attaining operators from $X$ to $Y$ is denoted by $\na(X, Y)$. When the range space is $Y = \bbr$, we write them by $\call(X)$ and $\na (X)$ for simplicity. 

In recent years, the study of the lineability of sets has gained a lot of attention  \cite{ABPS, AGS, BPS, GQ}. We say that a subset $A$ of a vector space is \textit{$n$-lineable} ($n\in\bbn$) if $A\cup\{0\}$ contains some $n$-dimensional linear space, it is \textit{lineable} if $A\cup\{0\}$ contains some infinite-dimensional linear space, and it is \textit{spaceable} if $A\cup\{0\}$ contains some closed infinite-dimensional linear space. It is clear that, for instance, if $X = Z^*$ is an infinite-dimensional dual Banach space, then $\na(X)$ is spaceable as $Z$ is contained in $\na(X)$, if $X=c_0$, then $\na(X)$ is a non-closed infinite-dimensional linear space, and if $X=\ell_1$, then $\na(X)$ is not a linear space (see for instance \cite{JMR23}). Despite that, for $X=\ell_1$, like for many other spaces, $\na(X)$ is at least $2$-lineable. In 2001, G. Godefroy asked if for every Banach space $X$ of dimension at least $2$, the set $\na(X)$ is always $2$-lineable (see \cite[Problem III]{Godefroy01}). This was solved in the negative by R. Rmoutil in 2017 (see \cite{Rmoutil17}): there is a renorming $X$ of $c_0$ due to C. Read (see \cite{Read18}) for which $\na(X)$ is not 2-lineable. In this paper, we study related spaceability questions for several sets of real Lipschitz functions. 

Let $M$ be a pointed metric space (that is, a metric space $M$ with a distinguished point $0$), and let $Y$ be a real Banach space. We denote by $\lip(M, Y)$ the Banach space of Lipschitz functions $f:M\rightarrow Y$ such that $f(0)=0$ endowed with the Lipschitz norm
$$\|f\|:=\sup\left\{ \frac{\|f(q)-f(p)\|}{d(p, q)}:\, (p,q) \in \widetilde{M} \right\},$$
where the notation $\widetilde{A}$ means the set $\{(p,q)\in A \times A : p\neq q \}$ for a given set $A$. When the range space is $Y = \bbr$, we omit to include $\bbr$ in the notation for all kinds of sets of specific Lipschitz functions from $M$ to $\bbr$. For convenience, let us write the slope of $f$ from a point $p$ to a point $q$ in $M$ as
\[
S(f,p,q) = \frac{f(q)-f(p)}{d(p,q)}. 
\]
We say that a Lipschitz function $f\in\lip(M, Y)$ \textit{strongly attains its norm} if there exist $(p,q) \in \widetilde{M}$ such that $\|S(f,p,q)\|=\|f\|$. The set of strongly norm-attaining Lipschitz functions from $M$ to $Y$ is denoted by $\sna(M,Y)$, and it has been studied extensively in recent years (see for instance \cite{CCGMR19, Chiclana22, CGMR21, CM19, GPPR18, Godefroy16, JMR23, KMS16} and the references therein). In this article, unless specified otherwise, every metric space will be assumed to be pointed, and note that for our purposes the choice of the point $0$ is irrelevant since if $0$ and $0'$ are two distinguished points for $M$, the correspondence between $\lip(M, Y)$ and $\operatorname{Lip}_{0'}(M, Y)$ given by $f\mapsto f-f(0')$ is an isometric isomorphism that preserves the norm and the norm-attainment behaviour of every Lipschitz function in the space.


For simplicity, from this point onwards, we will use the following notational abuse: we will say that a Banach space $X$ is isomorphically (respectively, isometrically) contained (or embedded) in a subset $A$ of a Banach space $Y$ if the set $A$ contains a space which is linearly isomorphic (respectively, isometrically isomorphic) to $X$. 


In \cite{CDW16} and \cite{CJ17} it was respectively shown that $\ell_\infty$ can always be embedded isomorphically and isometrically inside the space $\lip(M)$ for any infinite metric space $M$. Soon after, spaceability questions were studied in detail for the set $\sna(M)$ in \cite{AMRT23,DMQR23,KR22}. We summarize the main results from those papers regarding the existence and sizes of spaces within $\sna(M)$. Recall that a metric space $M$ is \textit{discrete} if it contains no accumulation points, and it is \textit{uniformly discrete} if $\inf\{d(p, q):\, (p,q) \in \widetilde{M}\}>0$. 

\begin{enumerate}[label=(S\arabic*)]
\itemsep0.3em 
\item If $M$ has finite cardinality $|M|=n\in\bbn$, then $\sna(M)=\lip(M)$ is an $(n-1)$-dimensional Banach space, so we are only interested in infinite metric spaces.
\item If $M$ is infinite, then $\ell_1^n$ is isometrically contained in $\sna(M)$ for all $n\in\bbn$ (\cite[Theorem 1]{KR22}).
\item If $Y$ is any Banach space, then $Y$ is isometrically contained in $\sna(B_{Y^*})$ (\cite[Proposition 1]{KR22}). \label{S-Nec-Ell1}
\item If $Y$ is isometrically contained in $\sna(M)$, then $Y^*$ is separable if and only if $M$ is separable (\cite[Theorem 2]{KR22}). \label{KRS} 
\item If $M$ is $\sigma$-precompact and $Y$ is isometrically contained in $\sna(M)$, then $Y$ is isomorphically polyhedral and has separable dual (\cite[Theorem 3]{KR22}).
\item If $M$ is infinite, then $c_0$ is isomorphically contained in $\sna(M)$ (this is proven for complete spaces in \cite[Main Theorem]{AMRT23}, but in the non-complete case it is also true, see \cite[Theorem 4.2]{DMQR23}). \label{S} 
\item If $M$ is infinite and not uniformly discrete, then $c_0$ is isometrically contained in $\sna(M)$ (\cite[Theorem 4.2]{DMQR23}). \label{SS}
\item There exist uniformly discrete metric spaces $M$ such that $c_0$ cannot be isometrically contained in $\sna(M)$ (\cite[Theorems 4.1 and 4.4]{DMQR23}). \label{SSS}
\item If the set of accumulation points of $M$ has density character $\Gamma$, for some infinite cardinal $\Gamma$, then $\sna(M)$ contains $c_0(\Gamma)$ isometrically (\cite[Theorem 5.2]{DMQR23}). \label{SSSS}
\end{enumerate}

In particular, if $M$ is an infinite metric space, the set $\sna(M)$ is spaceable, as it contains an isomorphic copy of $c_0$, in contrast to what happens in the case of norm-attaining functionals. Besides the strong norm-attainment, several other natural norm-attainment notions have been extensively studied for Lipschitz functions, namely $\der(X)$, $\ldira(X)$, $\dira(X)$, and $\lipa(M)$ (see Section \ref{Section-NA-Lips} for the definitions and \cite{Choi23, CCM20, Godefroy16, KMS16} for the background). Note that by the natural set inclusions from \cite[Section 1]{CCM20}, the sets $\der(X)$, $\ldira(X)$, $\dira(X)$, and $\lipa(M)$ always contain $c_0$ isometrically by \ref{SS} and \cite[Theorem 5]{CJ17}. In this paper, we introduce a new natural norm-attainment notion that is weaker than the strong norm-attainment, namely the pointwise norm-attainment of Lipschitz functions (see Definition \ref{def:pna}). We will see in Theorem \ref{theorem:negative-example-ell1} that the set of pointwise norm-attaining Lipschitz functions on $M$, which we denote by $\pna(M)$, needs not contain $\ell_\infty$ isometrically, which further motivates our main question: whether or not it is always possible to isometrically embed $c_0$ in $\pna(M)$ (see Question \ref{Q:Main}). Our main result, Theorem \ref{Main-Theorem-c0-PNA} provides a partial positive answer to this question. We also observe that for a metric space $M$ such that $c_0$ is known not to be isometrically embedded into the set $\sna(M)$, the space $c_0$ can in fact be isometrically embedded if one considers the set $\pna (M)$ instead. 


The paper is structured as follows. First, we introduce the main concept of the article, namely the \textit{pointwise norm-attainment} of Lipschitz functions (see Definition \ref{def:pna}), and study spaceability questions for the set $\pna(M)$ of pointwise norm-attaining Lipschitz functions, and this study is split into several sections. In Section \ref{Section-PNA-Ell-infty}, we provide preliminary spaceability results for the set $\pna(M)$. We will show in particular that sometimes it is possible to embed $\ell_\infty$ in $\pna(M)$ but it is not always the case. We will also show that removing one single point from a space $M$ can completely change the spaceability properties of $\pna(M)$. 
In Section \ref{Section-SNA-c0}, we present several new positive results on the isometric embedding of $c_0$ into $\sna(M)$. Moreover, we relate the isometric embedding of $c_0$ into $\sna(M)$ with the isometric embedding of $\ell_1$ into $\fm$, which allows us to present new examples of metric spaces $M$ for which $\sna(M)$ does not contain $c_0$ isometrically. We also show that if $M$ is a subset of an $\bbr$-tree containing all the branching points, then $c_0$ is isometrically contained in $\sna(M)$. We finish the section by showing that there exists a metric space $M$ such that $c_0$ is not contained in $\na(\fm)$ with the aid of some recent results about convex integrals of molecules. 
In Section \ref{Section-PNA-c0}, we tackle our main question: whether or not it is possible to isometrically embed $c_0$ in $\pna(M)$ for any infinite metric space $M$ (see Question \ref{Q:Main}). Throughout the section, we will provide several constructions of an isometric copy of $c_0$ in $\pna(M)$ for several classes of metric spaces $M$, which contrast with some of the negative results for $\sna(M)$ from Section \ref{Section-SNA-c0}. Our main result, Theorem \ref{Main-Theorem-c0-PNA}, states that if $M$ is infinite, then it contains a metric subspace $M_0\subset M$ such that $c_0$ is isometrically contained in $\pna(M_0)$. In Section \ref{Section-PNA-Ell-1} we will discuss some results about the possibility of embedding $\ell_1$ in $\sna(M)$ and in $\pna(M)$, remarking the differences between both sets once more. Finally, in Section \ref{Section-NA-Lips} as an appendix, we will recall several norm-attainment notions for Lipschitz functions from the literature, and study spaceability questions and the set relations among them.

\section{Pointwise norm-attainment and preliminary results}\label{Section-PNA-Ell-infty}

We will first introduce the main concept of the article.

\begin{definition}\label{def:pna}
Let $M$ be a pointed metric space, and let $Y$ be a real Banach space. We say that $f \in \Lip(M,Y)$ \textit{attains its pointwise norm} if there exists $p \in M$ such that
$$\sup_{q \in M \setminus \{p\}} \| S(f,p,q)\| = \|f\|.$$
In this case, we write $f \in \PNA(M,Y)$.
\end{definition}

By definition, the inclusions
$$\SNA(M,Y) \subseteq \PNA(M,Y) \subseteq \Lip(M,Y)$$
clearly hold. The following example, which will appear again later in Theorem \ref{theorem:negative-example-ell1}, may help to understand better that the pointwise norm attainment is a natural notion, which lies in between the set of strongly norm attaining Lipschitz functions and the space of Lipschitz functions.

\begin{example}\label{example:discrete-metric}
Let $M$ be the usual discrete infinite metric space, that is, $d(p,q) = 1$ for all $(p,q) \in \widetilde{M}$, and let $f \in \lip(M)$ be given. Then, $f \in \sna(M)$ if and only if both $\sup \{ f(p) : {p \in M}\}$ and $\inf \{ f(p) : {p \in M}\}$ are attained. In the same manner, $f \in \pna(M)$ if and only if either one of $\sup \{ f(p) : {p \in M}\}$ or $\inf \{ f(p) : {p \in M}\}$ is attained.
\end{example}

Note that this example shows, in particular, that the previous inclusions cannot be reversed in general.
Indeed, as we will show from several spaceability results in the following sections, the concepts of strong norm-attainment and pointwise norm-attainment are distinct. 
In the Appendix (Section \ref{Section-NA-Lips}) we study the relations between the notion of pointwise norm-attainment and other norm-attainment notions, which may provide a better intuition for the geometric relations between the new concept and the known ones. Moreover, we will see later that in many cases there is a Lipschitz function which does not attain its pointwise norm. Therefore, the following question arises naturally.
\begin{question}\label{Q:Non-triviality}
Is the inclusion $\pna(M)\subset \lip(M)$ always strict for every infinite metric space $M$?
\end{question}
Recall from \ref{KRS} that if $M$ is separable, any Banach space in $\sna(M)$ must have separable dual. Since the set $\pna(M)$ inherits all the positive spaceability results from $\sna(M)$, it is natural to wonder if a version of \ref{KRS} is true for $\pna(M)$. This, combined with the fact that often $\pna\subsetneq \lip(M)$, motivates the following questions.
\begin{question}\label{Q:PNA-Ell-Infty}
Is $\ell_\infty$ isometrically contained in $\pna(M)$ for every infinite metric space $M$?
\end{question}
\begin{question}\label{Q:PNA-Size}
If $M$ is a separable infinite metric space and $Y$ is a Banach space contained in $\pna(M)$, must $Y$ have a separable dual?
\end{question}

We will see in this section that the answers to all the questions above are negative. 
We first present the following result which answers Questions \ref{Q:Non-triviality} and \ref{Q:PNA-Size} in the negative.




\begin{proposition}\label{Countable-M-EllInfty}
There exists a countable uniformly discrete metric space $M$ such that $\pna(M) = \lip(M)$ and, moreover, $\pna(M)$ is isometrically isomorphic to $\ell_\infty$. 
\end{proposition}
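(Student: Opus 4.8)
The plan is to produce an explicit space for which the base point $0$ is a \emph{universal norming point}, so that norm-attainment becomes automatic. Concretely, I would take $M = \{0\} \cup \{x_n : n \in \bbn\}$ to be the countable pointed metric space determined by $d(x_n, 0) = 1$ for every $n \in \bbn$ and $d(x_n, x_m) = 2$ whenever $n \neq m$. The triangle inequalities are immediate to check: the only nontrivial constraints are $d(x_n, x_m) \le d(x_n, 0) + d(0, x_m)$, which reads $2 \le 2$, and $d(x_n, 0) \le d(x_n, x_m) + d(x_m, 0)$, which reads $1 \le 3$. Thus $M$ is a genuine metric space; it is clearly countable and uniformly discrete, since $\inf\{d(p,q) : (p,q) \in \widetilde{M}\} = 1 > 0$.

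The main computation is to identify $\lip(M)$ isometrically with $\ell_\infty$. Given $f \in \lip(M)$ (so $f(0) = 0$), the only slopes available are $S(f,0,x_n) = f(x_n)$ and $S(f,x_n,x_m) = \tfrac{1}{2}\bigl(f(x_m) - f(x_n)\bigr)$ for $n \neq m$. The key estimate is that the ``cross'' slopes are always dominated by the slopes through $0$:
\[
\frac{|f(x_m) - f(x_n)|}{2} \le \frac{|f(x_m)| + |f(x_n)|}{2} \le \sup_{k \in \bbn} |f(x_k)|,
\]
so that $\|f\| = \sup_{k \in \bbn} |f(x_k)|$. Conversely, any bounded real sequence $(a_n)_n$ defines a Lipschitz function via $f(0) = 0$, $f(x_n) = a_n$, and the same estimate gives $\|f\| = \sup_n |a_n| < \infty$. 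Hence the linear map $f \mapsto (f(x_n))_n$ is an isometric isomorphism of $\lip(M)$ onto $\ell_\infty$.

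Finally, I would observe that the computation above \emph{already} shows that $0$ attains the pointwise norm of every $f$: by the very description of the Lipschitz norm through the point $0$,
\[
\sup_{q \in M \setminus \{0\}} |S(f,0,q)| = \sup_{n \in \bbn} |f(x_n)| = \|f\|,
\]
so $f \in \pna(M)$ regardless of $f$. (Pleasantly, one does not even need the supremum $\sup_n |f(x_n)|$ to be attained at some $x_n$; only the equality of suprema matters for pointwise norm-attainment, by Definition \ref{def:pna}.) Therefore $\pna(M) = \lip(M)$, and combined with the previous paragraph this yields that $\pna(M)$ is isometrically isomorphic to $\ell_\infty$. There is no genuine obstacle here beyond the norm computation: the only step requiring care is the cross-slope domination, and it is precisely the choice $d(x_n, x_m) = 2 = d(x_n, 0) + d(0, x_m)$ that forces $0$ to be a norming point for \emph{every} Lipschitz function.
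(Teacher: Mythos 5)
Your proposal is correct and is essentially the same construction and argument as the paper's proof: the identical metric space ($d(\cdot,0)=1$, mutual distances $2$), the same cross-slope domination $\tfrac{|f(x_m)|+|f(x_n)|}{2}\le\sup_k|f(x_k)|$ to identify $\lip(M)$ with $\ell_\infty$, and the same observation that $0$ is then a pointwise norming point for every $f$. The only (welcome) additions are the explicit triangle-inequality check and the remark that Definition \ref{def:pna} does not require the supremum over $n$ to be attained.
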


\begin{proof}
Define $M = \{0\} \cup \{p_{n}\}_{n=1}^{\infty},$
endowed with the following metric:
$$d(p,q):=\begin{cases}
0,\quad &\text{if $p=q$}\\
1,\quad &\text{if }p\neq q \text{ and }0\in\{p,q\}\\
2,\quad &\text{if }p=p_{n},\, q=p_{m} \text{ for some }n,m\in\bbn,\, n\neq m.
\end{cases}$$

Note that each Lipschitz function $f\in\lip(M)$ can be expressed as a sequence $a:=(a_n)_n$ by simply writing $a_n:=f(p_n)$ for each $n\in\bbn$. If the sequence $a$ is not bounded, then in particular there is a subsequence $(a_{n_k})_k$ such that $\lim_k |a_{n_k}|= \infty$. In this case, we get that $|S(f, 0, p_{n_k})|$ converges to $\infty$; hence $f$ is not Lipschitz. It follows that $a \in \ell_\infty$. Conversely, each element $a=(a_n)_n$ in $\ell_\infty$ induces a Lipschitz function $f_a$ by means of $f_a (p_n) := a_n$ for all $n\in\bbn$ and $f_a(0):=0$. 

We claim that $\|f_a \| = \|a\|$ and $f$ attains its pointwise norm at the point $0$. Let $p, q \in M$ with $p\neq q$ be given. 
\begin{enumerate}
\itemsep0.3em
\item If $p=0$ and $q=p_n$ for some $n\in\bbn$, we get
$|S(f_a, p, q)|=|a_n| \leq \|a \|.$

\item If there are $m,n\in\bbn$ with $m\neq n$ such that $p=p_n$ and $q=p_m$, then $$|S(f_a, p, q)|\leq \frac{|a_n|+|a_m|}{2}\leq \|a\|.$$
\end{enumerate}
Consequently, the mapping $a\mapsto f_a$ is an isometric isomorphism from $\ell_\infty$ onto $\lip(M)$, and each $f_a$ attains its pointwise norm at $0$.
\end{proof}

Notice that the metric space given in Proposition \ref{Countable-M-EllInfty} is not compact. In the following result, an example of a compact metric space $M$ for which every Lipschitz function attains its pointwise norm is presented. 



\begin{proposition}
There is an infinite compact metric space $M$ such that $\pna(M) = \lip(M)$.
\end{proposition}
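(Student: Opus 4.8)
The plan is to construct a \textbf{star} (or \textbf{spider}) space: a compact subset of an $\bbr$-tree with a single center whose radial slopes dominate every other slope. Fix a strictly decreasing sequence of positive reals $(r_n)_{n=1}^{\infty}$ with $r_n \to 0$ (for instance $r_n = 2^{-n}$), and set $M = \{0\} \cup \{p_n : n \in \bbn\}$ equipped with
\[
d(0,p_n) = r_n \quad\text{and}\quad d(p_n,p_m) = r_n + r_m \quad (n\neq m).
\]
First I would check that $d$ is a genuine metric (the only nontrivial triangle inequalities reduce to estimates of the form $r_n + r_m \le r_n + r_k + (r_k + r_m)$ and are immediate) and that $M$ is compact: since $d(p_n,0) = r_n \to 0$, every sequence in $M$ that is not eventually constant has a subsequence converging to $0$, so $M$ is sequentially compact, hence compact. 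It is clearly infinite.

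Next, given $f \in \lip(M)$, write $a_n := f(p_n)$ (recall $f(0)=0$). The radial slopes are $S(f,0,p_n) = a_n/r_n$, while the transversal slopes are $S(f,p_n,p_m) = (a_m-a_n)/(r_n+r_m)$. The key step is to show that the radial slopes dominate everything, that is,
\[
\|f\| = \sup_{n\in\bbn} \frac{|a_n|}{r_n}.
\]
This follows from the mediant inequality $\frac{x+y}{s+t} \le \max\{x/s,\,y/t\}$ (valid for $s,t>0$ and $x,y\ge 0$): indeed,
\[
\frac{|a_m-a_n|}{r_n+r_m} \le \frac{|a_n|+|a_m|}{r_n+r_m} \le \max\Big\{\frac{|a_n|}{r_n},\,\frac{|a_m|}{r_m}\Big\},
\]
so no transversal slope can exceed $\sup_n |a_n|/r_n$, and the reverse inequality is trivial since the radial quotients are themselves slopes of $f$.

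Once this identity is established the conclusion is immediate: the supremum defining $\|f\|$ is attained in the pointwise sense at the center $0$, because $\sup_{q \in M\setminus\{0\}} |S(f,0,q)| = \sup_n |a_n|/r_n = \|f\|$. Hence every $f \in \lip(M)$ attains its pointwise norm at $0$, giving $\pna(M) = \lip(M)$. The only real content of the argument is the mediant domination of the previous paragraph, which is where the geometry of the star is used; everything else is routine verification. As a byproduct, the map $b \mapsto f_b$ with $f_b(p_n) = r_n b_n$ identifies $\ell_\infty$ isometrically with $\lip(M)$, exactly as in Proposition \ref{Countable-M-EllInfty}, but now over a compact space.
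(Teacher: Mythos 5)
Your proof is correct, but it is a genuinely different construction and argument from the paper's. The paper takes the lacunary set $M=\{0\}\cup\{1/2^{n^2}\}$ inside $[0,1]$ and runs a compactness argument on near-optimal pairs: it picks pairs $(p_n,q_n)$ with slope exceeding $1-1/2^n$ and splits into two cases according to whether these pairs accumulate at $0$ (where the gap condition $1/2^{(m+1)^2}\ll 1/2^{m^2}$ forces $|S(f,0,q_n)|\to\|f\|$) or at a non-zero isolated point $a$ (where the pointwise norm is attained at $a$). You instead build a compact star $\{0\}\cup\{p_n\}$ with $d(p_n,p_m)=r_n+r_m$ and prove the exact identity $\|f\|=\sup_n|f(p_n)|/r_n$ via the mediant inequality, so that \emph{every} $f\in\lip(M)$ attains its pointwise norm at the single center $0$. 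Your version is cleaner and yields more: it shows $\lip(M)$ is isometrically $\ell_\infty$ and is precisely a compact analogue of Proposition \ref{Countable-M-EllInfty}, whereas the paper's example has the (orthogonal) virtue of living inside the real line, where no such exact domination by a single point holds and the two-case accumulation argument is genuinely needed. All the steps you flag as routine (that $d$ is a metric, that $M$ is compact, and the mediant estimate) do check out.
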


\begin{proof}
Let $M = \{0\} \cup \{ 1/2^{n^2} \}_{n=1}^\infty \subseteq [0,1]$. Let $f \in \lip (M)$ with $\|f\|=1$ be given. For each $n \in \N$, take $p_n \neq q_n$ in $M$ with $p_n < q_n$ and $|S(f,p_n,q_n)| > 1 - 1/2^n$. 
Assume first that the sequence $\{q_n\}_n$ is decreasing and converges to $0$. Put $q_n = 1/2^{m_n^2}$ with $m_n \in \N$ for every $n \in \mathbb{N}$. Note that $p_n \leq 1/2^{(m_n+1)^2}$. Then 
\begin{align*}
|f(q_n)| \geq |f(q_n)-f(p_n)| - |f(p_n)| &\geq \left(1-\frac{1}{2^n}\right) \left( \frac{1}{2^{m_n^2}} - \frac{1}{2^{(m_n+1)^2}} \right) - \frac{1}{2^{(m_n+1)^2}} \\ 
&\geq \left( 1- \frac{2}{2^n} \right) \frac{1}{2^{m_n^2}}
\end{align*} 
since $m_n \geq n \geq 1$. This shows that $f$ attains its pointwise norm at $0$.

If it is not the case, then there is a subsequence of $\{p_n\}_n$ converging to some non-zero isolated point $a \in M$. This implies that $f$ attains its pointwise norm at the point $a$. 
\end{proof}

However, in the case when $M=[0,1]$, we show that there is a Lipschitz function on $M$ which does not attain its pointwise norm as follows.  


\begin{proposition}\label{thm:ell-infty-in-pna-01}
For $M=[0,1] \subseteq \mathbb{R}$, we have $\pna([0,1])\neq \lip([0,1])$, while the set $\pna([0,1])$ contains $\ell_\infty$ isometrically.
\end{proposition}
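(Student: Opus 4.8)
The plan is to handle the two claims by separate constructions, using throughout that for $f\in\lip([0,1])$ every slope is an average of the derivative, $S(f,p,q)=\frac{1}{q-p}\int_p^q f'$, and that $\|f\|$ equals the supremum of the absolute values of the slopes. Set $A_p(f):=\sup_{q\ne p}|S(f,p,q)|$. Since $\sup_{p}A_p(f)$ equals the supremum of $|S(f,p,q)|$ over all pairs, i.e.\ $\|f\|$, membership $f\in\pna([0,1])$ is equivalent to the statement that $p\mapsto A_p(f)$ \emph{attains} its supremum $\|f\|$. Both parts will be phrased in these terms.

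For the strict inclusion I would build an $f$ whose norm is approached but realized at no point. Partition $(0,1]$ into consecutive intervals accumulating at $0$, alternating pieces $J_k$ of slope $1-\tfrac1k$ with flat pieces $G_k$ of slope $0$ and length $|G_k|\ge|J_k|$, placing the steeper pieces nearer to $0$. Then $\|f\|=\sup_k\bigl(1-\tfrac1k\bigr)=1$. For $p>0$ only finitely many $J_k$ lie to the right of $p$, while the steep pieces to the left of $p$ are separated from $p$ by long flat pieces, so every average from $p$ is an average of finitely many values lying in $[0,1)$ together with zeros, giving $A_p(f)<1$; and at $p=0$ the length domination $|G_k|\ge|J_k|$ forces $\frac1q\int_0^q f'\le\tfrac12$ for all $q$, hence $A_0(f)\le\tfrac12$. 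Thus $A_p(f)<\|f\|$ for every $p$ and $f\notin\pna([0,1])$. The only point requiring care is the uniform length-domination estimate bounding all averages away from $1$.

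For the embedding I would define a linear isometry $T\colon\ell_\infty\to\lip([0,1])$ through the derivative. Choose $1=s_0>s_1>s_2>\cdots\to 0$ decaying \emph{super-geometrically}, meaning $\delta_k:=s_k/s_{k-1}\to0$ (e.g.\ $s_k=2^{-2^k}$), set $I_k=[s_k,s_{k-1}]$, and fix a map $k\mapsto n_k$ from $\bbn$ onto $\bbn$ in which every index occurs infinitely often. For $a=(a_n)\in\ell_\infty$ let $Ta$ be the function with $(Ta)(0)=0$ and slope $a_{n_k}$ on each $I_k$. Since every coordinate is used on some $I_k$, the largest absolute slope of $Ta$ is $\sup_k|a_{n_k}|=\sup_n|a_n|=\|a\|_\infty$, so $T$ is linear and isometric onto a copy of $\ell_\infty$. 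To see that $Ta\in\pna([0,1])$, with attaining point $0$, I would show that the chord from $0$ to the far endpoint $s_{k-1}$ of $I_k$ essentially samples $a_{n_k}$: as $S(Ta,0,s_{k-1})$ is the convex combination of $\{a_{n_j}:j\ge k\}$ with weights $|I_j|/s_{k-1}$, whose first weight is $1-\delta_k$ and whose remaining weights sum to $\delta_k$, one gets
\[
\bigl|S(Ta,0,s_{k-1})-a_{n_k}\bigr|\le 2\,\delta_k\,\|a\|_\infty .
\]
Fixing any $n$ and letting $k\to\infty$ through the indices with $n_k=n$ (so $\delta_k\to0$) yields $\sup_k|S(Ta,0,s_{k-1})|\ge|a_n|$, whence $A_0(Ta)=\sup_q|S(Ta,0,q)|\ge\|a\|_\infty=\|Ta\|$; the reverse inequality is automatic, so $A_0(Ta)=\|Ta\|$ and $Ta\in\pna([0,1])$.

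The step I expect to be the crux is ensuring that \emph{all} of $\ell_\infty$ lands in $\pna([0,1])$, in particular those $a$ whose sup-norm is attained by no coordinate and whose coordinates oscillate in sign. For such $a$ no individual slope equals $\|a\|_\infty$, and because every slope is an average of $(Ta)'$, any construction with uniformly spaced or separated pieces fails: the oscillations cancel in the averages and $A_p(Ta)<\|Ta\|$ at every $p$. The two features that defeat this are the super-geometric decay, which collapses the average over $[0,s_{k-1}]$ to a near-pure sample of the single value $a_{n_k}$ (the mass of $[0,s_k]$ being negligible), and the cofinal repetition of indices, which makes $\sup_k|a_{n_k}|=\|a\|_\infty$. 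It is essential here that pointwise norm-attainment requires only a \emph{supremum} over $q$ rather than a maximum; this is exactly the feature separating $\pna$ from $\sna$, and the reason the corresponding statement is false for $\sna([0,1])$ by \ref{KRS}.
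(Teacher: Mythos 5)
Your argument is correct, and for the embedding it is genuinely different from the paper's. The paper builds disjointly supported ``tent'' functions $f_n$ living on the intervals $\left]1/2^{n^2},1/2^{(n-1)^2}\right[$ and then, for $f=\sum a_nf_n$, splits into two cases: if $\|a\|_\infty=|a_{k_0}|$ is attained, $f$ strongly attains its norm at the peak pair of the $k_0$-th tent; otherwise a subsequence of slopes from $0$ realizes $\|a\|_\infty$ and $f$ attains pointwise at $0$. Your isometry $T$ instead prescribes the derivative directly, assigning the value $a_{n_k}$ on $I_k=[s_k,s_{k-1}]$ with $\delta_k=s_k/s_{k-1}\to0$ and with every index $n$ recurring cofinally; the estimate $|S(Ta,0,s_{k-1})-a_{n_k}|\le 2\delta_k\|a\|_\infty$ then gives $A_0(Ta)=\|a\|_\infty$ for \emph{every} $a$, so you get attainment uniformly at the single point $0$ with no case distinction and no appeal to the disjoint-support lemma of \cite{DMQR23} -- arguably a cleaner mechanism, at the cost of each coordinate influencing infinitely many intervals rather than one. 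For the strict inclusion the two constructions are cousins: the paper uses an oscillating sawtooth squeezed between the $x$-axis and $y=\eps x$ with slopes $\pm(1-\eta^n)$ steepening toward $0$ (so that $A_0\le\eps$ and $A_p<1$ for $p>0$), while you use a monotone staircase whose steep pieces $J_k$ of slope $1-1/k$ are diluted by flat spacers $G_k$. Your version works, but as you note the dilution estimate needs the spacers positioned on the $0$-side of each $J_k$ (so that every initial segment $[0,q]$ and every chord reaching into the steep region pairs each steep piece with a flat piece of at least equal length already inside the chord); with that ordering $A_0(f)\le 1/2$ and $A_p(f)<1$ for $p>0$ follow, the latter by combining the local bound $1-1/k$ near $p$ with the diluted bound for distant $q$. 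Both routes establish the proposition.
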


\begin{proof}
For each $n\in\bbn$, denote
$$f_n(x):=\max\left\{0,\, \frac{2^{2n-1}-1}{2^{n^2+1}} - \left| x-\frac{2^{2n-1}+1}{2^{n^2+1}} \right|\right\},\quad \text{for each }x\in [0,1].$$
We will show that the Lipschitz functions $\{f_n\}_{n=1}^{\infty}$ are isometrically equivalent to the canonical coordinate vectors $\{e_n\}_{n=1}^{\infty}\subset \ell_\infty$ in $\pna([0,1])$. Note that for each $n\in\bbn$, we have that 
\[
\{x\in [0,1]:\, f_n(x)\neq 0\}=\left] \frac{1}{2^{n^2}}, \frac{1}{2^{{(n-1)}^2}} \right[.
\]

Let $a=(a_1, a_2, \ldots)\in\ell_\infty$, and consider $f$ to be the pointwise limit of $\sum_{n=1}^{\infty} a_n f_n$.  Two scenarios can happen:
\begin{enumerate}
\item If there exists some $k_0\in\bbn$ such that $\|a\|=|a_{k_0}|$, then arguing as in the proof of \cite[Lemma 3.3]{DMQR23}, we conclude that $\|f\|=\|a\|$ and that $f$ strongly attains its norm at the pair of points $\left( \frac{1}{2^{n^2}},\, \frac{2^{2n-1}+1}{2^{n^2+1}} \right)\in\widetilde{M}$.
\item Otherwise, there exists a subsequence $(a_{n_k})_{k=1}^{\infty}$ of $(a_n)_{n=1}^{\infty}$ such that $\lim_{k\to\infty} |a_k|=\|a\|$. Arguing again as in \cite[Lemma 3.3]{DMQR23}, we know that for all $p\in [0,1]$,
$$\sup_{q\in [0,1]\backslash\{p\}} | S(f,p,q) | \leq \|a\|.$$
Moreover, note that 
$$\sup_{q\in ]0,1]} |S(f,0,q)| \geq \lim_{k\to\infty} |a_{n_k}|\frac{2^{2k+1}-1}{2^{2k+1}+1} = \|a\|,$$
which implies that $f$ has norm $\|a\|$ and it attains it pointwise at the point $0$.
\end{enumerate}
In both cases, $f\in\pna([0,1])$ with $\|f\| = \|a\|$. This proves that $\ell_\infty$ is isometrically contained in $\pna([0,1])$. 

Next, we construct a function $g \in \Lip([0,1])$ which does not attain its pointwise norm. Let $0<\eps, \eta <1$ be fixed and let
\begin{itemize}
\itemsep0.3em
\item $p_1 := (1,0)$;
\item $g_1$ be the line with slope $-(1-\eta)$ passing through $p_1$;
\item $q_1$ be the intersection point of the line $g_1$ and the line $y=\eps x$;
\item $h_1$ be the line with slope $1-\eta$ passing through $q_1$. 
\end{itemize}
Inductively, suppose we have defined the points $p_n$ and $q_n$, and the lines $g_n$ and $h_{n}$ for $n \in \mathbb{N}$. Then let
\begin{itemize}
\itemsep0.3em
\item $p_{n+1}$ be the intersection point of the line $h_n$ and the $x$-axis; 
\item $g_{n+1}$ be the line with slope $-(1-\eta^{n+1})$ passing through $p_{n+1}$;
\item $q_{n+1}$ be the intersection point of the line $g_{n+1}$ and the line $y=\eps x$; 
\item $h_{n+1}$ be the line with slope $1-{\eta^{n+1}}$ passing through $q_{n+1}$. 
\end{itemize}
Let $g \in \Lip ([0,1])$ be the function with $g(0)=0$ whose graph is the union of line segments $[p_n, q_n]$ and $[q_n, p_{n+1}]$ with $n \in \mathbb{N}$ in $\mathbb{R}^2$ (that is, the union of cones, see Figure \ref{figure}). Note that $g$ cannot belong to $\pna ([0,1])$. As a matter of fact, observe that $\|g\| = 1$ while 
\[
\sup_{q\in\bbr\backslash\{p\}} |S(g,p,q)| < 1 \text{ for any fixed } p \in (0,1] \,\, \text{ and } \sup_{q\in\bbr\backslash\{0\}} |S(g,0,q)| \leq \varepsilon.
\]
This completes the proof. \end{proof}
\vspace{0.3em}

\begin{center}
\begin{figure}[H]
\centering
\includegraphics[scale=7]{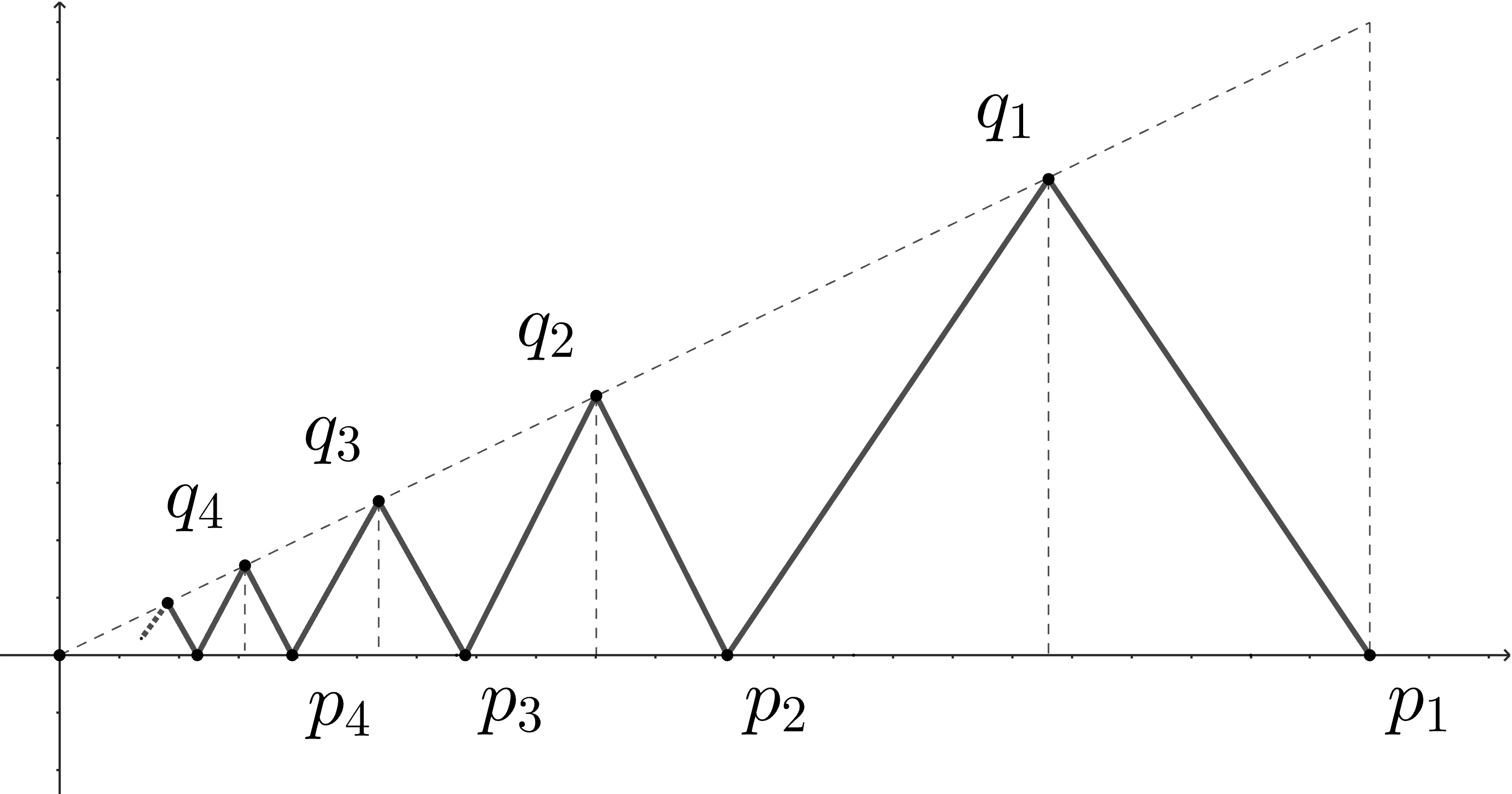}
\caption{The graph of the function $g \in \Lip([0,1])$ from Proposition \ref{thm:ell-infty-in-pna-01}}     \label{figure}
\end{figure}
\end{center}
\vspace{-2em}

We have seen some positive results about the embedding of $\ell_\infty$ in $\pna(M)$, but note that, as announced, this is not always possible, as there is a metric space $M$ such that $\pna(M)$ does not contain $\ell_1$ isometrically (in particular, it cannot contain $\ell_\infty$ either). In fact, such a space can be obtained by removing a single point from the metric space considered in the proof of Proposition \ref{Countable-M-EllInfty}, which shows that even a slight modification in the domain space can have drastic consequences in the spaceability properties of $\pna(M)$. Note that this result solves Question \ref{Q:PNA-Ell-Infty} in the negative.

\begin{theorem}\label{theorem:negative-example-ell1}
Let $M=\{p_n\}_{n=1}^{\infty}$ be a countable pointed metric space with $p_1=0$ and endowed with the usual discrete metric, $d(p,q)=1$ for all $(p,q)\in\widetilde{M}$. Then $\pna(M)$ cannot contain $\ell_1$ isometrically.
\end{theorem}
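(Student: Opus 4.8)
The plan is to argue by contradiction via a cardinality obstruction. Suppose there were a linear isometry $T\colon \ell_1 \to \lip(M)$ whose range, minus the origin, is contained in $\pna(M)$. I would identify $\lip(M)$ with the space of bounded real sequences $(a_n)_n$ satisfying $a_1=0$, equipped with the oscillation norm $\|(a_n)\|=\sup_n a_n-\inf_n a_n$, and recall from Example \ref{example:discrete-metric} that $(a_n)\in\pna(M)$ precisely when either $\sup_n a_n$ or $\inf_n a_n$ is attained. Writing $g_k:=T(e_k)=(g_{k,n})_n$ and setting $s_k:=\sup_n g_{k,n}$, $t_k:=\inf_n g_{k,n}$, the normalization $g_{k,1}=0$ and $\|g_k\|=1$ give $s_k\geq 0\geq t_k$, $s_k-t_k=1$, so in particular $s_k\neq t_k$ and $|g_{k,n}|\leq 1$ for all $k,n$.

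First I would fix the strictly positive weights $\lambda_k:=2^{-k}$ and, for each sign sequence $\epsilon=(\epsilon_k)_k\in\{-1,1\}^{\bbn}$, consider the norm-one element $c_\epsilon:=\sum_k \epsilon_k\lambda_k e_k\in\ell_1$. Setting $f_\epsilon:=T(c_\epsilon)=\sum_k \epsilon_k\lambda_k g_k$, the isometry forces $\sup_n f_\epsilon(p_n)-\inf_n f_\epsilon(p_n)=1$. The point of choosing positive weights is the termwise estimate
\[
\sup_n f_\epsilon(p_n)\leq \sum_k \lambda_k\,\sup_n(\epsilon_k g_{k,n}),\qquad \inf_n f_\epsilon(p_n)\geq \sum_k \lambda_k\,\inf_n(\epsilon_k g_{k,n}),
\]
whose difference is exactly $\sum_k\lambda_k(s_k-t_k)=1$; hence both inequalities must be equalities.

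The crucial rigidity step, which I expect to be the main obstacle, is to upgrade this equality into a pointwise statement using the positivity of the weights together with the pointwise norm-attainment hypothesis. Since $c_\epsilon\neq 0$, the function $f_\epsilon$ attains its supremum or its infimum at some index. If the supremum is attained at $n^*$, then $\sum_k\lambda_k\big(\sup_n(\epsilon_k g_{k,n})-\epsilon_k g_{k,n^*}\big)=0$ is a sum of nonnegative terms with every $\lambda_k>0$, so each term vanishes; that is, $g_{k,n^*}=s_k$ whenever $\epsilon_k=1$ and $g_{k,n^*}=t_k$ whenever $\epsilon_k=-1$, simultaneously for all $k$. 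Thus the point $(g_{k,n^*})_k$ equals the \emph{corner} $c^\epsilon:=(c^\epsilon_k)_k$ with $c^\epsilon_k=s_k$ if $\epsilon_k=1$ and $c^\epsilon_k=t_k$ if $\epsilon_k=-1$. Symmetrically, if the infimum is attained, the corresponding index realizes the antipodal corner $c^{-\epsilon}$. In either case there is an index $n\in\bbn$ with $(g_{k,n})_k\in\{c^\epsilon,c^{-\epsilon}\}$.

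Finally I would close the argument by counting. Because $s_k\neq t_k$ for every $k$, the assignment $\epsilon\mapsto c^\epsilon$ is injective, so distinct sign sequences produce distinct corners, and the antipodal pairs $\{c^\epsilon,c^{-\epsilon}\}$ are in bijection with $\{-1,1\}^{\bbn}$ modulo $\pm$, of which there are continuum many. On the other hand, each of the countably many indices $n$ determines a single point $(g_{k,n})_k$, which can coincide with a corner for at most one such antipodal pair. Since by the previous paragraph every pair must be realized by some index, we would be covering continuum-many pairs with only countably many indices, a contradiction. Therefore no such isometry $T$ exists, and $\pna(M)$ cannot contain $\ell_1$ isometrically.
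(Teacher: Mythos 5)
Your proof is correct and follows essentially the same route as the paper's: you form the same weighted sign combinations $\sum_k 2^{-k}\epsilon_k g_k$, show that the point where the pointwise norm is attained rigidly determines the sign pattern $\epsilon$ up to a global flip, and derive a contradiction by comparing continuum-many sign classes with countably many points of $M$. Your reformulation via the oscillation-norm sequence model and the explicit ``corner'' vectors is just a more concrete (and in fact slightly more carefully justified) rendering of the paper's computation with $\sup_{\ell\neq k_s}\{g_n(p_{k_s})-g_n(p_\ell)\}$.
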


\begin{proof}

Suppose that $\pna(M)$ contains $\ell_1$ isometrically. Take $\{g_n\}_{n=1}^{\infty}\subset \pna(M)$ which corresponds to the canonical basis of $\ell_1$.
For each $s=(s_n)_n \in \{-1,1\}^\mathbb{N}$, let $f_s=\sum_{n=1}^{\infty} 2^{-n} s_n g_n$ and say $f_s$ attains its pointwise norm at $p_{k_s} \in M$ for some $k_s \in \mathbb{N}$. 
By considering $-f_s$ instead of $f_s$ if necessary, we may assume that 
\begin{equation*}
\sup_{\ell \neq k_s} \{ f_s (p_{k_s}) - f_s (p_\ell) \}= 1 \text{ for each } s \in \{-1,1\}^\mathbb{N}. 
\end{equation*} 
For a fixed $s \in \{-1,1\}^{\mathbb{N}}$, observe that 
\begin{equation}\label{eq:k_s}
1 = \sup_{\ell \neq k_s} \{ f_s (p_{k_s}) - f_s (p_\ell) \} \leq \sum_{n=1}^\infty \frac{s_n}{2^n} \Big( \sup_{\ell \neq k_s} \{g_n (p_{k_s})- q_n (p_\ell) \} \Big) \leq 1, 
\end{equation}
which implies that $ \sup_{\ell \neq k_s} \{g_n (p_{k_s})- g_n (p_\ell) \} = \sign (s_n)$ for every $n \in \mathbb{N}$.

Let $s=(s_n)_n$ and $t=(t_n)_n$ be two sequences of signs from $\{-1,1\}^{\bbn}$ such that $s\neq t$ and $s\neq -t$. Let us choose $n_1,n_2\in\bbn$ such that $s_{n_1}=t_{n_1}$ and $s_{n_2}= -t_{n_2}$. We claim that ${k_s} \neq {k_t}$. Observe from \eqref{eq:k_s} that 
\begin{itemize}
\itemsep0.3em
\item $\sup_{\ell \neq k_s} \{g_n (p_{k_s})- g_n (p_\ell) \} = \sign (s_n)$ for every $n \in \N$; 
\item $\sup_{\ell \neq k_t} \{g_n (p_{k_t})- g_n (p_\ell) \} = \sign (t_n)$ for every $n \in \N$. 
\end{itemize} 
Thus, if ${k_s} = {k_t}$, then $\sign(s_n)=\sign(t_n)$ for every $n \in \mathbb{N}$ which contradicts the choice of $n_1$ and $n_2$. 
It follows that the points $p_{k_s}$ and $p_{k_t}$ are distinct in $M$ whenever $s\neq t$ and $s\neq -t$; hence $M$ must be uncountable. This contradicts that $M$ is countable, so $\pna(M)$ cannot contain $\ell_1$ isometrically. 
\end{proof}

We refer to Section \ref{Section-PNA-Ell-1} for further results about the embeddability of $\ell_1$ in the sets of pointwise or strongly norm-attaining Lipschitz functions.





\section{Embedding of \texorpdfstring{$c_0$}{c0} in strongly norm-attaining Lipschitz functions}\label{Section-SNA-c0}

Before we provide our main result of the paper in Section \ref{Section-PNA-c0} which deals with the isometric embedding of $c_0$ in the set of pointwise norm-attaining Lipschitz functions, we first focus on some new results on the isometric embedding of $c_0$ in the set of strongly norm-attaining Lipschitz functions in this section. Some of the results in this section will be mentioned in Section \ref{Section-PNA-c0} and we will highlight some notable differences between strongly norm-attainment and pointwise norm-attainment. To this end, we begin with some positive results on the embeddings of $c_0$ in $\sna(M)$ and, on the other hand, find some classes of spaces where it is not possible to embed $c_0$ isometrically in $\sna(M)$ (extending \cite[Theorems 4.1 and 4.4]{DMQR23}). The rest of the section will be devoted to some related results and applications involving Lipschitz-free spaces. We show in particular that there exists a metric space $M$ such that $\na(\fm)$ does not contain $c_0$ isometrically, and that if $M$ is a subset of an $\bbr$-tree that contains all its branching points, then $c_0$ is isometrically contained in $\sna(M)$ (the necessary definitions will be given later).





Recall from \cite[Lemma 3.3]{DMQR23} that if for some infinite index set $\Gamma$ a metric space $M$ contains $\{p_\gamma, q_\gamma\}_{\gamma \in \Gamma} \subseteq \widetilde{M}$ such that 
\begin{equation*}\label{eq:lem3.3_dmqr23}
d(p_\alpha,q_\beta) \geq d(p_\alpha,q_\alpha)+d(p_\beta,q_\beta) \quad \text{for every } \alpha \neq \beta \in \Gamma, 
\end{equation*}
then $\sna(M)$ contains $c_0(\Gamma)$ isometrically. 
To the best of our knowledge, this result and the one about infinite non-uniformly discrete metric spaces (see \ref{SS}) stand as the only known positive results concerning the isometric embedding of $c_0$ into $\sna(M)$.
We will see that there are more cases, as the following results show. We first provide a version of \cite[Lemma 3.3]{DMQR23} for uniformly discrete spaces. For simplicity, from now on we shall use the following notation: given $p\in M$, let $R(p):=\inf\{d(p,q):\, q\in M\backslash\{p\}\}$.

\begin{proposition}\label{prop:c0_characterizationQ}
Let $M$ be an infinite uniformly discrete pointed metric space and $\Gamma$ be a nonempty index set. Suppose that $M$ contains distinct points $\{p_\gamma\}_{\gamma \in \Gamma}$. 
The following claims are equivalent: \begin{itemize}
\item[\textup{(a)}] $\sna(M)$ contains an isometric copy of $c_0(\Gamma)$ with functions $\{f_\gamma\}_{\gamma \in \Gamma}$ given by
\[
f_\gamma (p_\gamma) = a_\gamma \in \mathbb{R}, \quad f_\gamma (p) = 0 \, \text{ if } p \in M \setminus \{p_\gamma\} 
\]
that correspond to $\{e_\gamma\}_{\gamma \in \Gamma} \subseteq c_0 (\Gamma)$. 
\item[\textup{(b)}] For each $\gamma \in \Gamma$, there exists some $q_\gamma\in M\backslash \{p_\gamma\}$ such that $d(p_\gamma, q_\gamma)=R(p_\gamma)$ and the following holds
$$d(p_\alpha, p_\beta)\geq d(p_\alpha, q_\alpha) + d(p_\beta, q_\beta) \text{ for all } \alpha \neq \beta \in \Gamma. 
$$
\end{itemize}
\end{proposition}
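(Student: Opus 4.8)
The plan is to prove the two implications separately, using throughout the explicit form of the slopes of the ``spike'' functions supported at a single distinguished point. First I would record the basic computation that if a function $f$ satisfies $f(p_\gamma)=a_\gamma$ and $f\equiv 0$ off $p_\gamma$, then every nonzero slope has the shape $|a_\gamma|/d(p_\gamma,q)$ with $q\neq p_\gamma$, so that $\|f\|=|a_\gamma|/R(p_\gamma)$ (finite precisely because $M$ is uniformly discrete). Hence the normalization $\|f_\gamma\|=1$ is equivalent to $|a_\gamma|=R(p_\gamma)$, and $f_\gamma$ strongly attains its norm if and only if the infimum $R(p_\gamma)$ is attained by some $q_\gamma\in M\setminus\{p_\gamma\}$. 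This single observation organizes both directions.

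For (b) $\Rightarrow$ (a), I would set $a_\gamma:=R(p_\gamma)$ and check that $c=(c_\gamma)\mapsto f_c:=\sum_\gamma c_\gamma f_\gamma$ is a well-defined isometry from $c_0(\Gamma)$ into $\sna(M)$. The upper bound $\|f_c\|\le\|c\|_\infty$ follows from a short case analysis on a pair $(p,q)\in\widetilde{M}$: if both points lie among the $p_\gamma$, the separation inequality in (b) together with the triangle inequality turns the slope into a weighted average of $|c_\alpha|$ and $|c_\beta|$; if exactly one of them is some $p_\gamma$, the bound $d(p_\gamma,q)\ge R(p_\gamma)$ suffices; otherwise the slope is $0$. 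The crucial observation for attainment is that the separation inequality forces each chosen $q_\gamma$ to be a \emph{non-distinguished} point, since $d(p_\gamma,p_\beta)\ge R(p_\gamma)+R(p_\beta)>R(p_\gamma)=d(p_\gamma,q_\gamma)$ for $\beta\neq\gamma$. Consequently $f_c(q_\gamma)=0$, so the slope of $f_c$ at $(p_\gamma,q_\gamma)$ equals $|c_\gamma|$; choosing $\gamma_0$ with $|c_{\gamma_0}|=\|c\|_\infty$ (a genuine maximum, since $c\in c_0(\Gamma)$) yields both $\|f_c\|=\|c\|_\infty$ and strong norm-attainment at $(p_{\gamma_0},q_{\gamma_0})$.

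For (a) $\Rightarrow$ (b), from $\|f_\gamma\|=1$ I obtain $|a_\gamma|=R(p_\gamma)$, and from $f_\gamma\in\sna(M)$ a norm-attaining pair must involve $p_\gamma$ (as $f_\gamma$ vanishes elsewhere), producing $q_\gamma$ with $d(p_\gamma,q_\gamma)=R(p_\gamma)$. To get the separation inequality I would test the given isometry on the two-term combinations $e_\alpha+\eps e_\beta$, where $\eps=-\sign(a_\alpha)\sign(a_\beta)\in\{-1,1\}$ is chosen so that $|a_\alpha-\eps a_\beta|=|a_\alpha|+|a_\beta|=R(p_\alpha)+R(p_\beta)$. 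Since $\|e_\alpha+\eps e_\beta\|_\infty=1$, the isometry gives $\|f_\alpha+\eps f_\beta\|=1$, while the slope of $f_\alpha+\eps f_\beta$ at $(p_\alpha,p_\beta)$ is exactly $(R(p_\alpha)+R(p_\beta))/d(p_\alpha,p_\beta)$; comparing the two yields $d(p_\alpha,p_\beta)\ge R(p_\alpha)+R(p_\beta)$, which is (b).

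I expect the main obstacle to be the attainment part of (b) $\Rightarrow$ (a): establishing the norm equality alone is routine, but verifying \emph{strong} norm-attainment requires exhibiting an explicit pair, and this works only after noticing that hypothesis (b) prevents the nearest point $q_\gamma$ from coinciding with another distinguished point $p_\beta$. The sign bookkeeping in (a) $\Rightarrow$ (b) is the only other delicate point, but it is settled by the single choice of $\eps$ above.
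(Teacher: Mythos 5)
Your proof is correct and follows essentially the same route as the paper: for (a) $\Rightarrow$ (b) both arguments extract $q_\gamma$ from the strong norm-attainment of the spike $f_\gamma$ and derive the separation inequality from $\|f_\alpha\pm f_\beta\|=1$, while for (b) $\Rightarrow$ (a) the paper simply cites \cite[Lemma 3.3]{DMQR23}, whose content is the case analysis you carry out explicitly. Your observation that the separation inequality together with uniform discreteness forces $q_\gamma\notin\{p_\beta\}_{\beta\neq\gamma}$, so that $f_c(q_\gamma)=0$ and the pair $(p_{\gamma_0},q_{\gamma_0})$ witnesses strong attainment, is exactly the point that makes that direction work.
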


\begin{proof} 
As the implication $(b) \Rightarrow (a)$ is proved in \cite[Lemma 3.3]{DMQR23}, we only prove $(a) \Rightarrow (b)$. By the definition of $f_\gamma$, we see that there exist some point $q_\gamma \in M\backslash\{p_\gamma \}$ such that $f_\gamma$ strongly attains its norm at $(p_\gamma, q_\gamma)$. Note that this implies that $d(p_\gamma, q_\gamma)=R(p_\gamma)$ and that $a_\gamma=\pm R(p_\gamma)$. Finally, consider $\alpha \neq \beta \in\Gamma$. Since the Lipschitz functons $f_\alpha\pm f_\beta$ must have norm $1$, we get in particular that 
$$\frac{R(p_\alpha)+R(p_\beta)}{d(p_\alpha, p_\beta)}=\frac{d(p_\alpha, q_\alpha)+d(p_\beta, q_\beta)}{d(p_\alpha, p_\beta)}\leq 1,$$
which proves the claim.
\end{proof}

The previous result (in fact \cite[Lemma 3.3]{DMQR23}) can be applied for instance if $M$ contains a sequence of distinct aligned points, that is, if there exists $\{p_n\}_n \subseteq M$ such that $d(p_n,p_{n+2}) = d(p_n,p_{n+1}) + d(p_{n+1},p_{n+2})$ for every $n \in \N$. An application of the following lemma will be shown later in Theorem \ref{prop:trees}.

\begin{lemma}\label{Lemma:Aligned}
Let $M$ be an infinite pointed metric space. If $M$ contains a sequence of distinct aligned points, then $c_0$ is isometrically contained in $\sna(M)$.
\end{lemma}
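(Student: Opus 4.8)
The plan is to extract from the aligned sequence a configuration to which \cite[Lemma 3.3]{DMQR23} (equivalently, Proposition \ref{prop:c0_characterizationQ}) applies, after first disposing of the non-uniformly-discrete case. Write $r_n := d(p_n, p_{n+1})$ for the consecutive gaps, so that the alignment hypothesis reads $d(p_n, p_{n+2}) = r_n + r_{n+1}$ for every $n$. First I would reduce to the uniformly discrete setting: if $\inf_n r_n = 0$, then $M$ is not uniformly discrete, and hence $c_0$ is already isometrically contained in $\sna(M)$ by \ref{SS}. Thus I may assume $\delta := \inf_n r_n > 0$, so that every consecutive gap is at least $\delta$ and, by alignment, $d(p_n, p_{n+2}) = r_n + r_{n+1} \geq 2\delta$.

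The heart of the argument is then to choose indices $n_1 < n_2 < \cdots$ so that the pairs $(x_k, y_k) := (p_{n_k}, p_{n_k+1})$ satisfy the separation inequality
\[
d(x_j, y_k) \;\geq\; d(x_j, y_j) + d(x_k, y_k) \qquad (j \neq k),
\]
which is precisely the hypothesis of \cite[Lemma 3.3]{DMQR23}; feeding these pairs into that lemma yields an isometric copy of $c_0$ whose basis functions are supported near the pairs $(p_{n_k}, p_{n_k+1})$ and each strongly attain their norm there. The cleanest way to guarantee the displayed inequality is to pass to a subsequence that is \emph{globally} aligned, i.e. along which $d(p_{n_i}, p_{n_l}) = \sum_{i \leq t < l} d(p_{n_t}, p_{n_{t+1}})$; for such a subsequence the cross-distances $d(x_j, y_k)$ are additive and the separation inequality holds with room to spare.

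The selection itself I would carry out greedily. Having chosen $n_1, \dots, n_k$, I would use the alignment relations, which say that each $p_{m+1}$ lies metrically between $p_m$ and $p_{m+2}$, to pick $n_{k+1}$ far enough along the sequence that $p_{n_{k+1}}$ lies ``metrically beyond'' all previously selected pairs, so that the triangle inequality together with the alignment forces $d(x_j, y_k)$ to dominate the sum of the two short gaps $d(x_j, y_j) + d(x_k, y_k)$.

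The step I expect to be the main obstacle is exactly this extraction. Since the hypothesis only controls consecutive triples, the sequence may \emph{fold back} and bring pairs that are far apart in index close together in the metric, which would destroy the separation inequality; consecutive alignment does not by itself imply global alignment. The crux is therefore to leverage the alignment, once $M$ is uniformly discrete, to produce a subsequence along which the forward distances accumulate, so that long-range distances dominate the local gaps and a globally aligned (or at least separated) subsequence can be isolated. When the sequence genuinely spreads out — as it does, for example, when it lies on a ray of an $\bbr$-tree, which is the situation relevant to Theorem \ref{prop:trees} — the alignment is automatically global and this extraction is immediate, so that \cite[Lemma 3.3]{DMQR23} applies directly.
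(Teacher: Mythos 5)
Your proposal correctly identifies the target — extract pairs to which \cite[Lemma 3.3]{DMQR23} applies — but it stops exactly at the step that constitutes the proof. You write that ``the crux is therefore to leverage the alignment \dots to produce a subsequence along which the forward distances accumulate,'' and you offer a greedy selection in outline, but you never actually show how consecutive-triple alignment forces the separation inequality $d(x_j,y_k)\geq d(x_j,y_j)+d(x_k,y_k)$ for the selected pairs. You yourself observe (correctly) that consecutive alignment of triples does not imply global alignment in a general metric space — one can build finite metric spaces where every triple $(p_n,p_{n+1},p_{n+2})$ is aligned yet $d(p_1,p_4)$ is much smaller than the sum of the gaps — so the ``folding back'' obstruction you flag is real, and a greedy choice of indices ``far enough along'' has nothing to push against: being far along in index gives no lower bound on metric distance under the consecutive-only hypothesis. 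As written, the proposal is therefore a plan with an acknowledged hole at its center, not a proof.

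The paper's argument avoids the extraction problem entirely by using the alignment globally (which is what holds in the intended application, Theorem \ref{prop:trees}, where the points sit on a geodesic of an $\bbr$-tree): writing the aligned sequence as $\{r_n\}_n$, it sets $p_n:=r_{2n-1}$ and takes $q_n$ to be the nearest point of the sequence to $p_n$, which is necessarily $r_{2n-2}$ or $r_{2n}$; then for $n<m$ the telescoping identity $d(r_{2n-1},r_{2m-1})=\sum_{k=2n-1}^{2m-2}d(r_k,r_{k+1})$ contains the two disjoint gap terms $d(r_{2n-1},r_{2n})$ and $d(r_{2m-2},r_{2m-1})$, giving $d(p_n,p_m)\geq d(p_n,q_n)+d(p_m,q_m)$ at once, so \cite[Lemma 3.3]{DMQR23} applies with no case split and no reduction to the uniformly discrete setting (your use of \ref{SS} is harmless but unnecessary). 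Your observation that the parenthetical definition of ``aligned'' in the paper only constrains consecutive triples, whereas the proof needs the global betweenness property, is a fair criticism of the paper's phrasing; but to turn your proposal into a proof you would either have to adopt the global reading (in which case your extraction becomes the trivial telescoping above) or supply an argument — which you do not — that the weaker hypothesis still yields a separated subfamily of pairs.
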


\begin{proof}
Consider an ordered sequence of distinct aligned points $\{r_n\}_{n=1}^{\infty}$ in $M$. Define the subsequence $\{p_n\}_{n=1}^{\infty}$ and the subset  $\{q_n:\, n\in\mathbb{N}\}$ of $\{r_n\}_{n=1}^{\infty}$ as follows: for each $n\in\mathbb{N}$, put $p_n=r_{2n-1}$ and let $q_n$ be the closest element to $p_n$ among $\{r_n\}_{n=1}^{\infty}$ (if there are 2 points at the same distance, choose the one with lowest index). Clearly, for all $n\in\mathbb{N}$, $q_n$ is either $r_{2n-2}$ or $r_{2n}$. Then for any different positive integers $n<m$, the following is satisfied
\[
d(p_n, p_m)=d(r_{2n-1}, r_{2m-1})\geq d(r_{2n-1}, r_{2n})+d(r_{2m-2}, r_{2m-1}) \geq d(p_n, q_n)+d(q_m, p_m).
\]
Therefore, we can apply \cite[Lemma 3.3]{DMQR23} to $\{(p_n, q_n)\}_{n \in \mathbb{N}} \subseteq \widetilde{M}$ and finish the proof.
\end{proof}


However, as we know by \cite[Theorems 4.1 and 4.4]{DMQR23}, there are many uniformly discrete spaces $M$ that do not meet the conditions from Proposition \ref{prop:c0_characterizationQ}. 
For instance, consider an infinite metric space $M$ with the usual discrete metric as in Example \ref{example:discrete-metric} or Theorem \ref{theorem:negative-example-ell1}, or the space in the following example.


\begin{example}\label{prop-first-weird-example}
Let $M=\{p_n\}_n$, where $p_n=(1+1/n)e_n\in c_0$ for $n \geq 2$ with $p_1=0$. Note that for all $n<m$, we have 
\[
d(p_n, p_m)=1+\frac{1}{n}=R(p_n).
\] 
It is not difficult to check that $M$ does not satisfy the assumption in Proposition \ref{prop:c0_characterizationQ}. 
\end{example}

Nevertheless, the following result shows that $c_0$ still can be embedded isometrically in $\sna(M)$ when $M$ is an infinite metric space with the usual discrete metric or the one in Example \ref{prop-first-weird-example}. 

\begin{theorem}\label{theorem:c0-condition-1-improved}
Let $M$ be an infinite uniformly discrete pointed metric space and let $\Gamma$ be an infinite index set. Suppose that $\{(p_\gamma, q_\gamma)\}_{\gamma \in \Gamma} \subseteq \widetilde{M}$. 
The following claims are equivalent: 
\begin{itemize}
\item[\textup{(a)}] $\sna(M)$ contains an isometric copy of $c_0(\Gamma)$ with functions $\{f_\gamma\}_{\gamma \in \Gamma}$ given by
\[
f_\gamma (p_\gamma)= {d(p_\gamma, q_\gamma)}/{2}, \quad f_\gamma (q_\gamma)= -{d(p_\gamma, q_\gamma)}/{2}, \quad  f_\gamma(p) = 0 \, \text{ if } p \in M \setminus \cup_\gamma \{p_\gamma, q_\gamma\},
\]
that correspond to $\{e_\gamma\}_{\gamma \in \Gamma} \subseteq c_0(\Gamma)$.
\item[\textup{(b)}] $R(p_\gamma), R(q_\gamma) \geq d(p_\gamma, q_\gamma)/2$ for each $\gamma \in \Gamma$, and the following assertion
\begin{equation}\label{eq:pnqn/2}
d(p_\alpha,q_\alpha) + d(p_\beta, q_\beta)\leq 2\min\bigl\{d(r, s):\, r \in\{p_\alpha, q_\alpha\},\, s \in\{p_\beta, q_\beta\}\bigr\}
\end{equation}
holds for all $\alpha \neq \beta \in \Gamma$. 
\end{itemize}
\end{theorem}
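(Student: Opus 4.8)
The plan is to prove the two implications separately, writing $d_\gamma := d(p_\gamma,q_\gamma)$ throughout, so that each $f_\gamma$ equals $d_\gamma/2$ at $p_\gamma$, equals $-d_\gamma/2$ at $q_\gamma$, and vanishes elsewhere, whence $S(f_\gamma,p_\gamma,q_\gamma)=-1$. For the implication $(b)\Rightarrow(a)$ I would first note that the right-hand side of \eqref{eq:pnqn/2} is strictly positive, so (b) forces the four points $p_\alpha,q_\alpha,p_\beta,q_\beta$ to be pairwise distinct for $\alpha\neq\beta$; hence the supports $\{p_\gamma,q_\gamma\}$ are pairwise disjoint and, for $a=(a_\gamma)\in c_0(\Gamma)$, the function $f_a:=\sum_\gamma a_\gamma f_\gamma$ is well defined pointwise with $f_a(p_\gamma)=a_\gamma d_\gamma/2$ and $f_a(q_\gamma)=-a_\gamma d_\gamma/2$. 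I would then bound $\|f_a\|$ by splitting the pairs $(r,s)\in\widetilde{M}$ into three types: both points lying in one common support (giving slope $|a_\gamma|$); one point $r$ in a support $\{p_\gamma,q_\gamma\}$ and the other outside every support (where $|S(f_a,r,s)|\leq \tfrac{|a_\gamma|d_\gamma/2}{R(r)}\leq |a_\gamma|$ by the first half of (b)); and $r,s$ lying in two distinct supports (where $|f_a(s)-f_a(r)|\leq \|a\|_\infty(d_\gamma+d_\beta)/2$ combined with \eqref{eq:pnqn/2} gives $|S(f_a,r,s)|\leq\|a\|_\infty$). This yields $\|f_a\|\leq\|a\|_\infty$, and the first type gives the reverse inequality, so $a\mapsto f_a$ is a linear isometry; finally, since the supremum defining $\|a\|_\infty$ is attained at some $\gamma_0$, the pair $(p_{\gamma_0},q_{\gamma_0})$ shows $f_a\in\sna(M)$.

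For $(a)\Rightarrow(b)$, each $f_\gamma$ corresponds to $e_\gamma$, so $\|f_\gamma\|=1$; inspecting the slopes of $f_\gamma$ from $p_\gamma$ (respectively $q_\gamma$) to an arbitrary third point shows that $\|f_\gamma\|=1$ requires $d(p_\gamma,p)\geq d_\gamma/2$ (respectively $d(q_\gamma,p)\geq d_\gamma/2$) for every $p$, that is, $R(p_\gamma),R(q_\gamma)\geq d_\gamma/2$, which is the first half of (b). For the second half I fix $\alpha\neq\beta$ and use $\|f_\alpha+f_\beta\|=\|f_\alpha-f_\beta\|=1$. The first task is to rule out coincidences among $p_\alpha,q_\alpha,p_\beta,q_\beta$: if the two supports shared a single point $x$, I would pick the sign $\epsilon\in\{+1,-1\}$ for which $f_\alpha$ and $\epsilon f_\beta$ reinforce at $x$, and then the slope of $f_\alpha+\epsilon f_\beta$ from $x$ to the other point of the pair containing $x$ equals $1+\tfrac{d_\beta}{2d_\alpha}>1$ (or the symmetric quantity), contradicting $\|f_\alpha\pm f_\beta\|=1$; and if the supports coincided as sets then $f_\alpha=\pm f_\beta$, which is likewise impossible. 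Once the four points are distinct, evaluating $f_\alpha\pm f_\beta$ on each cross-pair $(r,s)$ with $r\in\{p_\alpha,q_\alpha\}$ and $s\in\{p_\beta,q_\beta\}$ — choosing each time the sign that makes the two bumps add constructively — yields $\tfrac{(d_\alpha+d_\beta)/2}{d(r,s)}\leq 1$, which is precisely \eqref{eq:pnqn/2}.

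The step I expect to be the main obstacle is this distinctness argument inside $(a)\Rightarrow(b)$: the single-point norm computation and the four cross-pair estimates are routine and essentially mirror the sufficiency argument, but one must argue carefully that no two of $p_\alpha,q_\alpha,p_\beta,q_\beta$ can coincide, treating the shared-single-point case (broken by a reinforcing choice of sign) and the coinciding-supports case (forcing $f_\alpha=\pm f_\beta$) separately. Everything else reduces to the three slope computations already carried out in the proof of $(b)\Rightarrow(a)$.
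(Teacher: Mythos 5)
Your proposal is correct and follows essentially the same route as the paper: the same three-way case split on the location of $(p,q)$ for $(b)\Rightarrow(a)$, and the same use of $\|f_\gamma\|=1$ and $\|f_\alpha\pm f_\beta\|=1$ for $(a)\Rightarrow(b)$. The only difference is that you spell out the distinctness of $p_\alpha,q_\alpha,p_\beta,q_\beta$ (via the reinforcing choice of sign), a point the paper's very terse necessity argument leaves implicit; this is added care, not a different method.
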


\begin{proof}
${(b) \Rightarrow (a)}$. Define the Lipschitz functions $f_\gamma$ as in (a). 
Let $a=\{a_\gamma\}_\gamma\in c_0(\Gamma)$ and $(p,q) \in \widetilde{M}$ be given, and put $f_a:=\sum_{\gamma \in \Gamma} a_\gamma f_\gamma$. 
If $p, q \not\in \{ p_\gamma, q_\gamma : \gamma \in \Gamma\}$, then it is clear that $S(f_a, p, q)=0$. Thus, we only consider the following: 
\begin{enumerate}
\itemsep0.3em
\item If only one of $p$ and $q$ belongs to $\{ p_\gamma, q_\gamma : \gamma \in \Gamma\}$, then $|S(f_a, p, q)|\leq \|a\|$ from the assumption on $R(p_\gamma)$ and $R(q_\gamma)$. 
\item If there is $\gamma \in \Gamma$ such that $\{p ,q\} =\{p_\gamma, q_\gamma\}$, then $|S(f_a, p, q)|=|a_\gamma|$.
\item Finally, if there are $(\alpha, \beta) \in \widetilde{\Gamma}$ such that $p \in\{p_\alpha, q_\alpha\}$ and $q \in\{p_\beta, q_\beta\}$, then
$$|S(f_a, p, q)|\leq \frac{|a_\alpha|d(p_\alpha, q_\alpha) + |a_\beta|d(p_\beta, q_\beta)}{2\min\bigl\{d(r, s):\, r \in\{p_\alpha, q_\alpha\bigr\},\, s \in\{p_\beta, q_\beta\}\}}\leq \|a\|.$$
\end{enumerate}
This proves that $f_a\in\sna(M)$ and that $\|f_a\|=a$.

${(a) \Rightarrow (b)}$. From the fact that $\|f_\gamma\|=1$ for each $\gamma \in \Gamma$, we get that both $R(p_\gamma), R(q_\gamma) \geq d(p_\gamma, q_\gamma)/2$. Next, let $\alpha \neq \beta \in \Gamma$, and note that $f_\alpha \pm f_\beta$ have norm $1$, from which \eqref{eq:pnqn/2} follows.\end{proof}


It is not difficult to see that a metric space in the following corollary satisfies the condition (b) in Theorem \ref{theorem:c0-condition-1-improved}. 

\begin{corollary}\label{theorem:c0-condition-1}
Let $M$ be an infinite pointed metric space that is uniformly discrete and such that there exist a collection of pairs $\{(p_\gamma,q_\gamma)\}_{\gamma \in\Gamma}\subset \widetilde{M}$, for some non-empty index set $\Gamma$, satisfying the following conditions:
\begin{itemize}
\itemsep0.3em
\item If $\alpha \neq \beta\in \Gamma$, $\{p_\alpha,q_\alpha\}\cap \{p_\beta,q_\beta\}=\emptyset$.
\item For all $\gamma\in\Gamma$, $R(p_\gamma)=R(q_\gamma)=d(p_\gamma, q_\gamma)$.
\end{itemize}
Then $\sna(M)$ contains $c_0(\Gamma)$ isometrically.
\end{corollary}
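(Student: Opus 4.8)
The plan is to deduce this corollary directly from Theorem \ref{theorem:c0-condition-1-improved} by verifying that the two hypotheses given here imply condition (b) of that theorem for the family $\{(p_\gamma,q_\gamma)\}_{\gamma \in \Gamma}$. The first requirement of (b), namely that $R(p_\gamma), R(q_\gamma) \geq d(p_\gamma,q_\gamma)/2$ for every $\gamma \in \Gamma$, is immediate from the hypothesis $R(p_\gamma)=R(q_\gamma)=d(p_\gamma,q_\gamma)$, since trivially $d(p_\gamma,q_\gamma)\geq d(p_\gamma,q_\gamma)/2$. So the whole task reduces to establishing the separation inequality \eqref{eq:pnqn/2}.

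To prove \eqref{eq:pnqn/2}, I would fix $\alpha \neq \beta \in \Gamma$ and first note that the disjointness hypothesis $\{p_\alpha,q_\alpha\}\cap\{p_\beta,q_\beta\}=\emptyset$ (together with $p_\gamma \neq q_\gamma$, which holds because $(p_\gamma,q_\gamma)\in\widetilde{M}$) guarantees that the four points $p_\alpha, q_\alpha, p_\beta, q_\beta$ are pairwise distinct. Consequently, each of the four cross-distances $d(r,s)$ with $r\in\{p_\alpha,q_\alpha\}$ and $s\in\{p_\beta,q_\beta\}$ is a distance between two \emph{distinct} points, which is precisely what legitimizes invoking the nearest-neighbour quantity $R$ in the estimates below.

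The key elementary observation is then as follows. For any such cross-pair $(r,s)$, since $r$ and $s$ are distinct points of $M$, the definition of $R$ gives both $d(r,s)\geq R(r)$ and $d(r,s)\geq R(s)$. Because $r\in\{p_\alpha,q_\alpha\}$ forces $R(r)=d(p_\alpha,q_\alpha)$, and $s\in\{p_\beta,q_\beta\}$ forces $R(s)=d(p_\beta,q_\beta)$, we obtain
$$d(r,s)\geq \max\bigl\{d(p_\alpha,q_\alpha),\, d(p_\beta,q_\beta)\bigr\}\geq \frac{d(p_\alpha,q_\alpha)+d(p_\beta,q_\beta)}{2}.$$
Taking the minimum over the four cross-distances, the right-hand side is unchanged, and multiplying by $2$ yields exactly \eqref{eq:pnqn/2}.

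Having checked both parts of condition (b), I would conclude by applying Theorem \ref{theorem:c0-condition-1-improved}, which produces the desired isometric copy of $c_0(\Gamma)$ inside $\sna(M)$ via the functions $f_\gamma$ described there. I do not expect any genuine obstacle: the entire content is the remark that mutually disjoint pairs, each of whose endpoints realizes its own nearest-neighbour distance within its own pair, force every cross-distance to dominate the larger of the two pair-diameters, and hence their average. The only point deserving a moment's care is the distinctness of the four points, which is exactly what the disjointness assumption supplies and what permits the use of $R$ in the cross-distance bounds.
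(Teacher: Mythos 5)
Your proposal is correct and follows exactly the route the paper intends: the paper's entire ``proof'' is the one-line remark preceding the corollary that such an $M$ satisfies condition (b) of Theorem \ref{theorem:c0-condition-1-improved}, and your argument simply carries out that verification in detail (the first half of (b) is trivial, and the cross-distance bound $d(r,s)\geq\max\{d(p_\alpha,q_\alpha),d(p_\beta,q_\beta)\}$ via $R$ gives \eqref{eq:pnqn/2}). No gaps.
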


On the other hand, the following example shows that the condition (b) in Theorem \ref{theorem:c0-condition-1-improved} does not always hold.

\begin{example}\label{example:not_theorem:c0-condition-1-improved}
Let $M=\{0\}\cup \{p_n\}_{n=1}^{\infty}$, with metric given by $d(p_n, 0)=1+ 1/n$ for all $n$ and $d(p_n, p_m)=1+1/n+1/m$ for all $n\neq m\in \bbn$. Then, $M$ does not satisfy the condition (b) in Theorem \ref{theorem:c0-condition-1-improved}. 
\end{example} 

Note also that the metric space $M$ in Example \ref{example:not_theorem:c0-condition-1-improved} is not covered by Proposition \ref{prop:c0_characterizationQ}.
However, changing slightly the functions $f_n$ in the proof of Theorem \ref{theorem:c0-condition-1-improved}, we can still have an isometric copy of $c_0$ inside the set $\sna(M)$ for such an $M$ as the following result shows.





\begin{theorem}\label{thm:1+1/n+1/m}
Let $M$ be an infinite uniformly discrete pointed metric space and let $\Gamma$ be a nonempty index set. Suppose that $\{(p_\gamma, q_\gamma)\}_{\gamma \in \Gamma} \subseteq \widetilde{M}$. 
The following claims are equivalent: 
\begin{itemize}
\itemsep0.3em
\item[\textup{(a)}] $\sna(M)$ contains $c_0(\Gamma)$ isometrically with functions $\{f_\gamma\}_{\gamma \in \Gamma}$ given by
$$f_\gamma(x):=\begin{cases}
\frac{1}{2}({d(p_\gamma,q_\gamma)+ R(p_\gamma) -R(q_\gamma)}), \quad \text{if }p=p_\gamma\\
\frac{1}{2}(-d(p_\gamma,q_\gamma)+ R(p_\gamma) -R(q_\gamma)), \quad \text{if }p=q_\gamma\\
0,\quad \text{else}
\end{cases}$$
that correspond to $\{e_\gamma\}_{\gamma \in \Gamma} \subseteq c_0(\Gamma)$.
\item[\textup{(b)}] For every $\alpha \neq \beta \in \Gamma$, the following holds:
\vspace{0.3em}

\begin{enumerate}
\itemsep0.3em
\item[\textup{(1)}] $d(p_\alpha,q_\alpha) \leq R(p_\alpha) + R(q_\alpha)$; 
\item[\textup{(2)}] $d(p_\alpha,q_\alpha)+d(p_\beta,q_\beta) +R(p_\alpha) +R(p_\beta) - R(q_\alpha) - R(q_\beta) \leq 2 d(p_\alpha,p_\beta)$; 
\item[\textup{(3)}] $d(p_\alpha,q_\alpha)+d(p_\beta,q_\beta) +R(p_\alpha) - R(p_\beta)  - R(q_\alpha) +R(q_\beta) \leq 2 d(p_\alpha,q_\beta)$;
\item[\textup{(4)}] $d(p_\alpha,q_\alpha)+d(p_\beta,q_\beta) -R(p_\alpha) - R(p_\beta) + R(q_\alpha)  + R(q_\beta) \leq 2 d(q_\alpha,q_\beta)$. 
\end{enumerate} 
\end{itemize}
\end{theorem}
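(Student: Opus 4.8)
The plan is to prove both implications, the substantial one being $(b)\Rightarrow(a)$. Throughout I would write $d_\gamma:=d(p_\gamma,q_\gamma)$ and keep $R(p_\gamma),R(q_\gamma)$ as is. Two elementary facts drive everything. First, a \emph{sign pattern}: since $R(q_\gamma)\le d_\gamma$ we have $2f_\gamma(p_\gamma)=d_\gamma+R(p_\gamma)-R(q_\gamma)\ge R(p_\gamma)\ge0$, and since $R(p_\gamma)\le d_\gamma$ we have $2f_\gamma(q_\gamma)=-d_\gamma+R(p_\gamma)-R(q_\gamma)\le -R(q_\gamma)\le0$. Second, \emph{attainment}: $f_\gamma(p_\gamma)-f_\gamma(q_\gamma)=d_\gamma$, so $S(f_\gamma,q_\gamma,p_\gamma)=1$. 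I would also record at the outset that condition (b) forces the points $\{p_\gamma,q_\gamma\}_\gamma$ to be pairwise distinct: if two pairs shared a point, the right-hand side of one of (2)--(4) would vanish while its left-hand side, being a sum of nonnegative quantities like $d_\alpha+R(p_\alpha)-R(q_\alpha)\ge R(p_\alpha)>0$, is strictly positive. Hence $f_a:=\sum_\gamma a_\gamma f_\gamma$ is well defined pointwise, being nonzero at each point for at most one index.

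For $(b)\Rightarrow(a)$, fix $a=\{a_\gamma\}_\gamma\in c_0(\Gamma)$ and set $f_a=\sum_\gamma a_\gamma f_\gamma$. The lower bound and the attainment come for free from $S(f_a,q_\gamma,p_\gamma)=a_\gamma$: since $a\in c_0(\Gamma)$, the value $\|a\|=\max_\gamma|a_\gamma|$ is attained at some $\gamma_0$, so $f_a$ strongly attains the value $\|a\|$ at $(q_{\gamma_0},p_{\gamma_0})$. For the upper bound $\|f_a\|\le\|a\|$ I would split $(p,q)\in\widetilde M$ into cases exactly as in the proof of Theorem \ref{theorem:c0-condition-1-improved}. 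If $p,q$ avoid all pairs the slope is $0$; if exactly one, say $p\in\{p_\alpha,q_\alpha\}$, lies in a pair, then $|S(f_a,p,q)|=|a_\alpha|\,|f_\alpha(p)|/d(p,q)\le|a_\alpha|$ because $d(p,q)\ge R(p)$ while condition (1) gives $|f_\alpha(p)|\le R(p)$; if $\{p,q\}=\{p_\gamma,q_\gamma\}$ then $|S(f_a,p,q)|=|a_\gamma|$. The only genuine work is the cross-pair case $p\in\{p_\alpha,q_\alpha\}$, $q\in\{p_\beta,q_\beta\}$ with $\alpha\ne\beta$, where $|S(f_a,p,q)|\le\|a\|\,(|f_\alpha(p)|+|f_\beta(q)|)/d(p,q)$. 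Here I would use the sign pattern to compute $|f_\alpha(p)|+|f_\beta(q)|$ explicitly for each of the four configurations in $\{p_\alpha,q_\alpha\}\times\{p_\beta,q_\beta\}$, and check that the resulting inequality is precisely (2) (for $(p_\alpha,p_\beta)$), (3) (for $(p_\alpha,q_\beta)$, and, after swapping $\alpha\leftrightarrow\beta$, for $(q_\alpha,p_\beta)$), or (4) (for $(q_\alpha,q_\beta)$).

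For $(a)\Rightarrow(b)$, I would read off the conditions as necessary constraints from $\|f_\gamma\|=\|f_\alpha\pm f_\beta\|=1$. From $\|f_\gamma\|=1$, bounding the slopes $S(f_\gamma,p_\gamma,x)$ for $x\ne p_\gamma,q_\gamma$ forces $|f_\gamma(p_\gamma)|$ to be at most $\inf_{x\ne p_\gamma,q_\gamma}d(p_\gamma,x)$; when this infimum equals $R(p_\gamma)$ (i.e.\ $q_\gamma$ is not the unique nearest point of $p_\gamma$) this gives $|f_\gamma(p_\gamma)|\le R(p_\gamma)$, which is equivalent to (1), while in the complementary case $R(p_\gamma)=d_\gamma$ condition (1) reads $0\le R(q_\gamma)$ and holds trivially (the slopes from $q_\gamma$ are symmetric). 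From $\|f_\alpha\pm f_\beta\|=1$, evaluating the slope at $(p_\alpha,p_\beta)$ with the sign that makes the two contributions add (namely $f_\alpha-f_\beta$, since $f_\alpha(p_\alpha),f_\beta(p_\beta)\ge0$) yields (2); the analogous evaluations at $(p_\alpha,q_\beta)$ and at $(q_\alpha,q_\beta)$ yield (3) and (4).

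The main obstacle I anticipate is the sign bookkeeping in the cross-pair case of $(b)\Rightarrow(a)$: for each of the four point-configurations one must select the correct worst-case sign and verify that $|f_\alpha(p)|+|f_\beta(q)|$ collapses to the exact left-hand side of (2), (3), or (4), while simultaneously noting that the complementary sign produces only a difference $\bigl||f_\alpha(p)|-|f_\beta(q)|\bigr|$, which is automatically at most $\max(R(p),R(q))\le d(p,q)$ and so requires no hypothesis. Tracking which of (2)--(4), and which ordering of $\alpha,\beta$, matches which configuration is where care is needed, but conceptually it is merely the reduction ``$\|f_a\|\le\|a\|$ if and only if $\|f_\alpha\pm f_\beta\|\le1$ for all $\alpha\ne\beta$'' made explicit.
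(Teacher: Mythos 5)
Your proposal is correct and follows essentially the same route as the paper: the case analysis on $(p,q)$ matching condition (1) to the ``one endpoint in a pair'' case and conditions (2)--(4) to the four cross-pair configurations, with attainment at $(p_{\gamma_0},q_{\gamma_0})$ for an index realizing $\|a\|$, and the converse read off from $\|f_\gamma\|=\|f_\alpha\pm f_\beta\|=1$. Your added observations (the sign pattern $f_\gamma(p_\gamma)\ge 0\ge f_\gamma(q_\gamma)$, the pairwise disjointness of the pairs forced by (b), and the careful handling of (1) when $q_\gamma$ is the unique nearest point of $p_\gamma$) are correct refinements of details the paper leaves implicit.
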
 

\begin{proof}
${(b) \Rightarrow (a)}$. For each $\gamma \in \Gamma$, define the Lipschitz functions $f_\gamma$ as in (a). 
Let $\{a_\gamma\}_\gamma \in c_0(\Gamma)$ and $(p,q) \in \widetilde{M}$ be given, and consider $f = \sum_{\gamma\in\Gamma} a_\gamma f_\gamma$. If $p=p_\alpha$ and $q=p_\beta$ for some $\alpha \neq \beta \in \Gamma$, then 
\[
|S(f,p,q)| \leq \|a\| \left( \frac{d(p_\alpha,q_\alpha)+R(p_\alpha)-R(q_\alpha)+ d(p_\beta,q_\beta)+R(p_\beta)-R(q_\beta)}{2d(p_\alpha, p_\beta)} \right) \leq \|a\|
\]
by the assumption (2). Similarly, the cases when $(p,q)=(p_\alpha, q_\beta)$ and $(p,q)=(q_\alpha, q_\beta)$ can be treated in a similar way from the assumptions (3) and (4). Moreover, if $p=p_\gamma$ for some $\gamma \in \Gamma$ and $q \not\in \cup_\gamma \{p_\gamma,q_\gamma\}$, then $|S(f,p,q)| \leq 1$ again by the assumption (1). 
Finally, note that for $\gamma \in \Gamma$,  we have $S(f, p_\gamma, q_\gamma) = a_\gamma$. This completes the proof.

${(a) \Rightarrow (b)}$. Since each $f_\gamma$ has norm $1$, the inequality (1) is obtained. Next, for $\alpha \neq \beta \in \Gamma$, the fact that $f_\alpha \pm f_\beta$ have norm $1$ implies the inequalities (2)--(4). 
\end{proof} 

It is routine to check that the metric space $M$ in Example \ref{example:not_theorem:c0-condition-1-improved} satisfies the assumptions in Theorem \ref{thm:1+1/n+1/m}. 


Now, we will present some negative results and applications. 
To this end, recall the Lipschitz-free space over a metric space $M$: the space $\calf(M)=\overline{\operatorname{span}}\{\delta_x:\, x\in M\}$ formed by the evaluation mappings $\delta_x\in \lip(M)^*$ is called the \textit{Lipschitz-free space} (also known as Arens-Eells space or transportation cost space). Given $(p,q) \in \widetilde{M}$, the elements $m_{p,q}:=\frac{\delta_p - \delta_q}{d(p,q)}$ are called the \textit{molecules} of $\fm$ and it is well known that the closed convex hull of molecules is the closed unit ball of $\calf (M)$. One important universal property of Lipschitz-free space is linearization: given a Lipschitz function $f \in \lip (M,Y)$, there is a unique bounded linear operator $T_f \in \call (X,Y)$ with $\|T_f\| = \|f\|$ satisfying that $T_f (\delta_x) = f(x)$ for every $x \in M$.  
Now, it is easy to see that $f\in\lip(M,Y)$ strongly attains its norm at a pair $(p,q)$ if and only if the corresponding bounded linear operator $T_f \in \call(\fm, Y)$ attains its norm at the molecule $m_{p,q}$. For a solid background in Lipschitz functions and Lipschitz-free spaces, we refer the reader to \cite{Godefroy15, Weaver18}.

We start with a new tool to find negative examples of the isometric $c_0$-embedding in $\sna(M)$, which was found by Andr\'es Quilis. The authors are thankful to him for allowing us to include his proof of the following result. 

\begin{proposition}\label{prop:Quilis-Negative-Tool-Free}
If $\sna(M)$ contains $c_0(\Gamma)$ isometrically, then $\mathcal{F}(M)$ contains a $1$-complemented copy of $\ell_1 (\Gamma)$ isometrically.
\end{proposition}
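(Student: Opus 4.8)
The plan is to exploit the duality between $\sna(M)$ and norm-attainment on molecules of $\F(M)$, converting the isometric copy of $c_0(\Gamma)$ into a family of molecules along which a natural projection onto $\ell_1(\Gamma)$ can be built. Suppose $\{f_\gamma\}_{\gamma\in\Gamma}\subseteq \sna(M)$ spans an isometric copy of $c_0(\Gamma)$, so that for every $a=(a_\gamma)\in c_0(\Gamma)$ the function $f_a:=\sum_\gamma a_\gamma f_\gamma$ satisfies $\|f_a\|=\|a\|_\infty$. First I would record that since $f_\gamma$ strongly attains its norm, there is a pair $(p_\gamma,q_\gamma)\in\widetilde{M}$, equivalently a molecule $m_\gamma:=m_{p_\gamma,q_\gamma}\in\F(M)$, at which the associated operator $T_{f_\gamma}$ attains its norm $1$; this is exactly the linearization/duality fact recalled just before the statement.

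**The core construction** is to consider the map $\Phi\colon\ell_1(\Gamma)\to\F(M)$ defined on the canonical basis by $e_\gamma\mapsto m_\gamma$, extended linearly and continuously (this is well defined since each $\|m_\gamma\|=1$, giving $\|\Phi\|\le 1$). The key step is to show $\Phi$ is an isometric embedding and that its range is $1$-complemented. For isometry, I would dualize: for any finitely supported $b=(b_\gamma)\in\ell_1(\Gamma)$, I want to produce a norm-one Lipschitz function $g$ that pairs with $\sum_\gamma b_\gamma m_\gamma$ to give $\sum_\gamma|b_\gamma|=\|b\|_1$. The natural candidate is $g=f_a$ where $a_\gamma:=\operatorname{sgn}(b_\gamma)$ on the (finite) support of $b$ and $a_\gamma:=0$ elsewhere, so that $a\in c_0(\Gamma)$ with $\|a\|_\infty=1$, hence $\|g\|=1$. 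Then I must check the crucial pairing identity
\[
\left\langle g,\ \sum_\gamma b_\gamma m_\gamma\right\rangle=\sum_\gamma b_\gamma\,\langle f_a,m_\gamma\rangle=\sum_\gamma b_\gamma\operatorname{sgn}(b_\gamma)=\|b\|_1,
\]
which forces $\|\Phi(b)\|\ge\|b\|_1$ and hence equality.

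**The main obstacle** I anticipate is verifying the highlighted pairing $\langle f_a,m_\gamma\rangle=\operatorname{sgn}(b_\gamma)$ for every $\gamma$ in the support, i.e. that $S(f_a,p_\gamma,q_\gamma)=\operatorname{sgn}(b_\gamma)$ \emph{simultaneously} at all the prescribed molecules. Each $f_\gamma$ attains its norm at its own pair $(p_\gamma,q_\gamma)$ with value $\pm1$, but a priori the off-diagonal contributions $S(f_\delta,p_\gamma,q_\gamma)$ for $\delta\neq\gamma$ could spoil the value at $m_\gamma$. The resolution is that $\|f_a\|_\infty=\|a\|_\infty=1$ bounds $|S(f_a,p_\gamma,q_\gamma)|\le 1$, while by choosing signs adaptively (or by a convexity/extreme-point argument on the isometric $c_0$-structure) the diagonal term is pushed to be exactly $\operatorname{sgn}(b_\gamma)$ and the cross terms must vanish; here I would lean on the same slope-additivity bookkeeping used in the proofs of Theorems \ref{theorem:c0-condition-1-improved} and \ref{thm:1+1/n+1/m} to force the off-diagonal slopes to cancel, using that $\|f_{a\pm e_\delta}\|_\infty=1$.

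**Finally**, for $1$-complementation I would define the projection $P\colon\F(M)\to\Phi(\ell_1(\Gamma))$ via the functionals $m_\gamma^*$ coming from the $f_\gamma$: concretely, $P(\mu):=\sum_\gamma\langle f_\gamma,\mu\rangle\,m_\gamma$. The isometric $c_0(\Gamma)$-structure of $\{f_\gamma\}$ guarantees that for $\mu$ in the range, $\langle f_\gamma,\Phi(b)\rangle=b_\gamma$ recovers the coefficients (so $P$ is a genuine projection, $P\circ\Phi=\Phi$), while the estimate $\|P\mu\|=\|(\langle f_\gamma,\mu\rangle)_\gamma\|_1\le\|\mu\|$ — proved again by testing against $g=f_a$ with unimodular $a$ supported on the relevant coordinates — yields $\|P\|\le 1$. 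Since $P$ is a norm-one idempotent onto $\Phi(\ell_1(\Gamma))\cong\ell_1(\Gamma)$, the copy is $1$-complemented, completing the proof.
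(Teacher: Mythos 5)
Your proposal is correct and follows essentially the same route as the paper: molecules at the norm-attaining pairs, the lower bound obtained by testing against norm-one sign combinations $\sum_{\gamma\in F}c_\gamma f_\gamma$ over finite $F$, and the projection $P(\mu)=\sum_\gamma\langle\mu,f_\gamma\rangle m_{p_\gamma,q_\gamma}$. The cross-term vanishing you flag as the main obstacle is exactly \cite[Lemma 3.1]{DMQR23}, which the paper invokes; it follows in one line from $\|f_\alpha\pm f_\beta\|=1$ together with $|S(f_\beta,p_\beta,q_\beta)|=1$, and this pairwise statement is what you should use in place of ``$\|f_{a\pm e_\delta}\|_\infty=1$'' (which fails when $\delta$ lies in the support of a unimodular $a$).
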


\begin{proof}
Suppose that $\{f_\gamma\}_{\gamma \in \Gamma} \subseteq \sna(M)$ corresponds to $\{e_\gamma\}_{\gamma \in \Gamma} \subseteq c_0(\Gamma)$ isometrically. Say $|S(f_\gamma, p_\gamma,q_\gamma)| = 1$ for some $(p_\gamma, q_\gamma) \in \widetilde{M}$. 
Let $a:= \{a_\gamma\}_{\gamma \in\Gamma} \in \ell_1 (\Gamma)$ and $F$ be a finite subset of $\Gamma$. Then 
\[
\sum_{\gamma \in F} | a_\gamma| = \sum_{\gamma \in F} \| a_\gamma m_{p_\gamma, q_\gamma} \| \leq \| a \|. 
\]
This shows that $\mu = \sum_{\gamma \in \Gamma} a_\gamma m_{p_\gamma, q_\gamma}$ is well-defined, and $\|\mu\| \leq \|a\|$. To obtain the reverse inequality, observe from \cite[Lemma 3.1]{DMQR23} that $S(f_\alpha, p_\beta, q_\beta) = 0$ whenever $\alpha \neq \beta \in \Gamma$. Given a finite set $F$ of $\Gamma$, let us consider $c_\gamma = \sign (S(f_\gamma, p_\gamma, q_\gamma)) \sign (a_\gamma)$ for each $\gamma \in F$. Put $f = \sum_{\gamma \in F} c_\gamma f_\gamma \in \sna(M)$. Note that $\|f\| \leq 1$, and 
\[
\| \mu \| \geq  | \langle \mu,\, f \rangle | =  \sum_{\gamma \in F} |a_\gamma|,
\]     
which shows that $\|\mu \| = \|a\|$; hence $\mathcal{F}(M)$ contains a space $Y$ isometrically isomorphic to $\ell_1 (\Gamma)$, where the molecules $\{m_{p_\gamma, q_\gamma}\}_{\gamma \in \Gamma}$ are isometrically equivalent to the canonical vectors $\{e^*_\gamma\}_{\gamma \in \Gamma}$ of $\ell_1(\Gamma)$. 

Finally, we claim that $Y$ is $1$-complemented in $\fm$. To this end, define $P:\fm\rightarrow \fm$ as
$$P(\mu):=\sum_{\gamma\in \Gamma} \langle \mu,\, f_\gamma\rangle m_{p_\gamma, q_\gamma},\quad \text{for all }\mu\in\fm.$$
By definition, $P$ is a projection. Moreover, given $\mu\in \fm$,  let $\theta_\gamma:=\sign(\langle \mu,\, f_\gamma \rangle)$ for each $\gamma\in\Gamma$. Then 
\begin{align*}
\sum_{\gamma\in \Gamma} |\langle \mu,\, f_\gamma\rangle | &= \sup\left\{\left\langle \mu,\, \sum_{\gamma\in F} \theta_\gamma f_\gamma\right\rangle:\, F\subseteq \Gamma \text{ finite}\right\} \\ 
&\leq \|\mu\| \sup\left\{\left\|\sum_{\gamma\in F} \theta_\gamma f_\gamma\right\|:\, F\subseteq \Gamma \text{ finite}\right\}=\|\mu\|,
\end{align*}
where the last equality holds since $\{f_\gamma\}_{\gamma \in \Gamma}$ corresponds to $\{e_\gamma\}_{\gamma\in\Gamma} \subseteq c_0(\Gamma)$. \end{proof}



In \cite[Theorem 2.1]{OO20}, it is characterized when $\mathcal{F}(M)$ contains $\ell_1$ isometrically in terms of perfect matchings with minimum weights (for its definition, see \cite{OO20}). Using the above result combined with \cite[Theorem 2.1]{OO20}, we directly obtain new examples of metric spaces $M$ where $\sna(M)$ does not contain $c_0$ isometrically (see \cite[Remark 10]{CJ17} and \cite[Example 2.6]{OO20}). This approach, for instance, provides a different proof of \cite[Theorem 4.1]{DMQR23} by \cite[Example 2.6]{OO20}. 

Using ideas partially inspired by \cite[Example 2.6]{OO20}, we will now present a consequence of the previous result which provides a wide class of uniformly discrete metric spaces $M$ such that $c_0$ is not isometrically contained in $\sna(M)$. Every bounded metric space from \cite[Remark 10]{CJ17} is a particular case of this class.


\begin{theorem}\label{thm:counterexamples-c0-SNA-2}
Suppose that $M=\{p_n\}_{n=1}^{\infty}$ is a uniformly discrete pointed metric space endowed with the metric 
$$d(p_n, p_m)=L+\phi(n,m),\quad \text{for all }(n,m) \in\widetilde{\bbn},$$
for some $L>0$ and some symmetric function $\phi : \mathbb{N}^2 \rightarrow \mathbb{R}$ with the following properties:
\begin{enumerate}
\itemsep0.3em
\item[\textup{(i)}]
 For all $n\in\bbn$, the limit $\psi(n):=\lim_{m\to\infty} \phi(n, m)$ exists.
\item[\textup{(ii)}]
 For all $(n,m)\in\widetilde{\bbn}$, $\phi(n,m)>\psi(n)+\psi(m)$.
\item[\textup{(iii)}]
 $\liminf \{ \phi(n,m) : (n,m) \in \widetilde{\mathbb{N}}\} \geq 0$. 
\end{enumerate}
Then $\fm$ does not contain an isometric copy of $\ell_1$. Consequently, for every metric space  $M_0\subset M$, the set $\sna(M_0)$ does not contain $c_0$ isometrically.
\end{theorem}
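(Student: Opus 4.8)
The plan is to establish the first assertion---that $\fm$ carries no isometric copy of $\ell_1$---by means of the matching characterization of \cite[Theorem 2.1]{OO20}, and then to deduce the statement about $\sna(M_0)$ from Proposition~\ref{prop:Quilis-Negative-Tool-Free} together with the fact that Lipschitz-free spaces behave monotonically under passing to metric subspaces.

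For the first part I would argue by contradiction. Assuming $\ell_1$ embeds isometrically into $\fm$, \cite[Theorem 2.1]{OO20} supplies an infinite family of pairwise disjoint pairs $\{(p_{a_k}, p_{b_k})\}_{k=1}^{\infty} \subseteq \widetilde{M}$ such that, for every finite subfamily, the direct matching $\{p_{a_k}, p_{b_k}\}$ is a perfect matching of minimum weight of the underlying finite point set. I would then contradict this already at the level of two pairs. Fix the first pair $(p_{a_1}, p_{b_1})$. Since the pairs are pairwise disjoint, every index is used at most once, so for each $N$ only finitely many pairs meet $\{1, \dots, N\}$; hence $\min\{a_k, b_k\} \to \infty$ as $k \to \infty$. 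For the two-element subfamily $\{(p_{a_1}, p_{b_1}), (p_{a_k}, p_{b_k})\}$ I compare the direct matching with the cross matching $\{p_{a_1}, p_{b_k}\},\ \{p_{a_k}, p_{b_1}\}$: writing $d(p_n,p_m) = L + \phi(n,m)$, the difference of their weights equals
$$\bigl(\phi(a_1,b_1) + \phi(a_k,b_k)\bigr) - \bigl(\phi(a_1,b_k) + \phi(a_k,b_1)\bigr).$$

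The heart of the argument is to show this quantity is strictly positive for all large $k$. By (i) and the symmetry of $\phi$, as $k \to \infty$ (so $a_k, b_k \to \infty$) we have $\phi(a_1, b_k) \to \psi(a_1)$ and $\phi(a_k, b_1) \to \psi(b_1)$; by (iii), $\phi(a_k, b_k) \geq -\eps$ for all large $k$ and any prescribed $\eps > 0$; and by (ii), the gap $\delta := \phi(a_1, b_1) - \psi(a_1) - \psi(b_1)$ is strictly positive. Choosing $\eps < \delta/4$, the displayed difference exceeds $\delta/2 > 0$ for all large $k$, so the cross matching is strictly cheaper than the direct one. This shows the direct matching fails to have minimum weight for this subfamily, contradicting the choice of the family; hence $\fm$ contains no isometric copy of $\ell_1$. (Equivalently, and without appealing to the precise formulation in \cite{OO20}, the same inequality yields an explicit transport plan witnessing $\|m_{p_{a_1},p_{b_1}} + m_{p_{a_k},p_{b_k}}\| < 2$.) For the final assertion, let $M_0 \subseteq M$ be an arbitrary metric subspace. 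Since $\calf(M_0)$ embeds isometrically into $\fm$ (possibly after relocating the distinguished point, which is harmless), it too contains no isometric copy of $\ell_1$; by the contrapositive of Proposition~\ref{prop:Quilis-Negative-Tool-Free}, $\sna(M_0)$ cannot contain an isometric copy of $c_0$.

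I expect the main obstacle to be pinning down the correct competing matching and confirming that it genuinely lowers the weight: because the molecules $m_{p_{a_k}, p_{b_k}}$ need not share the same mass $1/d(p_{a_k}, p_{b_k})$, one must check that the cross matching (or an associated split transport plan) is admissible and strictly cheaper. It is precisely here that all three hypotheses enter---(i) to linearize the cross terms into the limits $\psi(a_1)$ and $\psi(b_1)$, (iii) to prevent the far pair from contributing a large negative weight, and, decisively, the strictness in (ii) to produce the positive gap $\delta$.
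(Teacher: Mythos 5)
Your proposal is correct and follows essentially the same route as the paper: both argue by contradiction via the minimum-weight matching characterization of \cite[Theorem 2.1]{OO20}, derive the two-pair inequality $\phi(a_1,b_1)+\phi(a_k,b_k)\leq\phi(a_1,b_k)+\phi(a_k,b_1)$, let the far index tend to infinity using (i) and (iii) to contradict the strict inequality (ii), and then conclude for $\sna(M_0)$ via Proposition~\ref{prop:Quilis-Negative-Tool-Free} and the isometric embedding $\calf(M_0)\hookrightarrow\fm$. The only cosmetic difference is that the paper fixes the near pair and takes a $\liminf$ over the far pairs, whereas you quantify the gap $\delta$ explicitly; the content is identical.
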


\begin{proof}
Assume that $\ell_1$ is isometrically contained in $\fm$. Then, by \cite[Theorem 2.1]{OO20}, there exists a sequence of pairs of distinct points $\{(u_n, v_n)\}_{n=1}^{\infty}$ in $\widetilde{M}$ such that for all $N\in\bbn$ and every permutation $\sigma:\{1,\ldots,N\}\rightarrow \{1,\ldots,N\}$,
$$\sum_{n=1}^N d(u_n, v_n) \leq \sum_{n=1}^N d(u_n, v_{\sigma(n)}).$$
Note that if such a sequence exists, every subsequence of it will also satisfy that condition. In particular, up to relabeling and taking subsequences if needed, there exist sequences of integers $\{k_n\}_n$ and $\{j_n\}_n$ such that for all $n\in\bbn$, the following holds:
\begin{itemize}
\item $k_n < j_n < k_{n+1}$ and $u_n=p_{k_n}$, $v_n=p_{j_n}.$
\end{itemize}
Then, for all $n<m$, 
$$\phi(k_n, j_n) + \phi(k_m, j_m) \leq \phi(k_n, j_m) + \phi(k_m, j_n).
$$ 
It follows that by letting $m \rightarrow \infty$,  
\[
\phi(k_n, j_n) + \liminf \{ \phi(i,j) : (i,j) \in \widetilde{\mathbb{N}} \} \leq \psi(k_n) + \psi(j_n). 
\]
From condition (iv), the left-hand side is greater than or equal to $\phi(k_n, j_n)$. This contradicts the condition (iii). Thus, we conclude that $\fm$ does not contain an isometric copy of $\ell_1$. 

Finally, the above argument combined with Proposition \ref{prop:Quilis-Negative-Tool-Free} shows that $c_0$ is not isometrically contained in $\sna(M_0)$ for any metric space $M_0 \subseteq M$. 
\end{proof}

As a consequence of Theorem \ref{thm:counterexamples-c0-SNA-2}, we have the following corollary which generalizes \cite[Theorem 4.1]{DMQR23}.

\begin{corollary}\label{Thm-Negative-c0-SNA}
Let $M = \{ p_n\}_n$ be a pointed metric space satisfying that 
\begin{enumerate}
\itemsep0.3em
\item[\textup{(i)}] $d(p_n,p_m)$ is decreasing with respect to $n$ and $m$, and $\lim_m d(p_n,p_m) = L>0$ for each $n \in \N$. 
\item[\textup{(ii)}] $d(p_n,p_m)>L$ for every $n \neq m$. 
\end{enumerate} 
Then $c_0$ is not isometrically contained in $\sna (M)$. 
\end{corollary}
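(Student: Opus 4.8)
The plan is to derive Corollary \ref{Thm-Negative-c0-SNA} directly from Theorem \ref{thm:counterexamples-c0-SNA-2} by verifying that a metric space satisfying (i) and (ii) fits into the framework of that theorem. To this end, I would first put $d(p_n,p_m) = L + \phi(n,m)$, where $L>0$ is the common limit from hypothesis (i), and define $\phi(n,m) := d(p_n,p_m) - L$ for all $(n,m) \in \widetilde{\bbn}$. By construction $\phi$ is symmetric (since $d$ is), and it remains to check the three properties (i)--(iii) required by Theorem \ref{thm:counterexamples-c0-SNA-2}.

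For property (i) of Theorem \ref{thm:counterexamples-c0-SNA-2}, I would fix $n$ and observe that $\lim_m \phi(n,m) = \lim_m d(p_n,p_m) - L = L - L = 0$, so the limit $\psi(n)$ exists and in fact equals $0$ for every $n$. This is the crucial simplification: in this special case $\psi \equiv 0$. For property (ii), I need $\phi(n,m) > \psi(n) + \psi(m) = 0$ for all $(n,m) \in \widetilde{\bbn}$, which is exactly hypothesis (ii) of the corollary, namely $d(p_n,p_m) > L$. For property (iii), I need $\liminf\{\phi(n,m) : (n,m) \in \widetilde{\bbn}\} \geq 0$; since each $\phi(n,m) = d(p_n,p_m) - L > 0$ by hypothesis (ii), every term is strictly positive, so the liminf is automatically nonnegative.

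Having verified all three hypotheses, I would invoke Theorem \ref{thm:counterexamples-c0-SNA-2} to conclude that $\fm$ does not contain an isometric copy of $\ell_1$, and hence (by the final assertion of that theorem, taking $M_0 = M$) that $c_0$ is not isometrically contained in $\sna(M)$, completing the proof. The monotonicity assumption in hypothesis (i) of the corollary is used to guarantee that the limit $\lim_m d(p_n,p_m)$ exists and equals the common value $L$; I should make sure to note that the existence of this limit is what allows $\phi$ to be well-defined with the correct limiting behaviour.

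I do not anticipate a serious obstacle here, as the corollary is essentially a clean specialization of the theorem with $\psi \equiv 0$. The only point requiring a small amount of care is confirming that the decreasing-and-convergent structure in hypothesis (i) genuinely forces $\psi(n) = 0$ for all $n$ simultaneously, rather than some nonzero constant, but this follows immediately from the stated common limit $L$. One should also double-check that the reference to condition ``(iv)'' in the proof of Theorem \ref{thm:counterexamples-c0-SNA-2} is a typo for condition (iii), so that the deduction chain is internally consistent when applied here.
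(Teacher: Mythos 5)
Your proposal is correct and matches the paper's primary route: the paper states this corollary precisely as a consequence of Theorem \ref{thm:counterexamples-c0-SNA-2}, and your verification that $\psi\equiv 0$ reduces conditions (i)--(iii) of that theorem to the corollary's hypotheses (together with the observation that the reference to ``(iv)'' is a typo) is exactly the intended argument; the only detail you leave implicit is that $M$ is uniformly discrete, which follows at once from $d(p_n,p_m)>L>0$. Note that the paper additionally supplies a second, self-contained proof of this corollary via Ramsey's theorem (to avoid relying on \cite[Theorem 2.1]{OO20}), but that is an alternative, not the main derivation.
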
 

However, the proof of Theorem \ref{thm:counterexamples-c0-SNA-2} depends on the result \cite[Theorem 2.1]{OO20}. Let us present an alternative and elementary proof of Corollary \ref{Thm-Negative-c0-SNA} inspired by the argument used in the proof of \cite[Theorem 4.1]{DMQR23}. 

As we make use of Ramsey's theorem in the proof (and later in Section \ref{Section-PNA-c0} as well), let us fix some notation. For any set $A$ and any $n\in\bbn$, let $A^{[n]}$ denote the set of all subsets of $A$ with exactly $n$ elements. Recall that Ramsey's Theorem states that given any infinite set $A$ and any finite partition $\{B_1,\dots,B_k\}$ ($k\in\bbn$) of the set $A^{[n]}$, there exists an infinite subset $S$ of $A$ and a number $i\in\{1,\dots k\}$ such that $S^{[n]}$ is contained in $B_i$ (see, for instance, \cite[Proposition 6.4]{FHHMZ11}).

\begin{proof}[\mbox{Alternative proof of Corollary \ref{Thm-Negative-c0-SNA}}]
Assume that there is $\{f_n\}_n \subseteq \sna (M)$ isometric to $c_0$-basis. For each $n \in \mathbb{N}$, let $u_n \neq v_n \in M$ be such that $|f_n(u_n) - f_n(v_n) | = d(u_n, v_n)$. Consider 
\begin{align*}
&A = \{ \{n,m\} \in \mathbb{N}^{[2]} : \{u_n,v_n\} \cap \{u_m, v_m\} = \emptyset\}; \\
&B_1 = \{ \{n,m\} \in \mathbb{N}^{[2]} : u_n = u_m \}, \,\, B_2 = \{ \{n,m\} \in \mathbb{N}^{[2]} : v_n = v_m \};  \\ 
&B_3 = \{ \{n,m\} \in \mathbb{N}^{[2]} : u_n = v_m \text{ or } u_m = v_n\}. 
\end{align*} 
By Ramsey's theorem, there exists $C \in \{A, B_1, B_2, B_3\}$ and an infinite set $S \subseteq \mathbb{N}$ such that $S^{[2]} \subseteq C$. 

\textit{Case I}. $C = A$. Let us restrict ourselves to $S$. Fix $n_0 \in S$, and let $\eps_0 := d(u_{n_0},v_{n_0})-L > 0$. By \cite[Lemma 3.2]{DMQR23} and our assumption, there exists $m_0 \in \mathbb{N} \setminus \{n_0\}$ such that 
\begin{itemize}
\itemsep0.3em
\item $\max \{ |f_{m_0} (u_{n_0})| , |f_{m_0} (v_{n_0}) | \} <\eps_0/6$;
\item $\max \{ d(u_{n_0}, u_{m_0}), d(u_{n_0}, v_{m_0}), d(u_{m_0}, v_{n_0}), d(v_{n_0}, v_{m_0}) \} < L + \eps_0/6$. 
\end{itemize} 
Note from \cite[Lemma 3.1]{DMQR23} that $f_{n_0} (u_{m_0}) = f_{n_0} (v_{m_0}) = C_{m_0}$ for some $C_{m_0} \in \mathbb{R}$. By relabelling if necessary, we may assume that 
\begin{equation}\label{eq:relabelling_assumption} 
|f_{n_0} (u_{n_0})-C_{m_0}| \geq |f_{n_0} (v_{n_0})-C_{m_0}|\, \text{ and } \, |f_{m_0} (u_{m_0})| \geq |f_{m_0} (v_{m_0}) |. 
\end{equation} 
By triangle inequality, we have 
\begin{itemize}
\itemsep0.3em
\item $|f_{n_0} (u_{n_0})-C_{m_0}| + |f_{n_0} (v_{n_0})-C_{m_0}| \geq d(u_{n_0}, v_{n_0})$; 
\item $|f_{m_0}(u_{m_0})| + |f_{m_0}(v_{m_0})|  \geq d(u_{m_0}, v_{m_0})$. 
\end{itemize} 
These two inequalities, with the assumption \eqref{eq:relabelling_assumption}, imply that 
\begin{itemize}
\itemsep0.3em
\item $|f_{n_0} (u_{n_0})-C_{m_0}|  \geq d(u_{n_0}, v_{n_0})/2$ \, and \, $|f_{m_0}(u_{m_0})|   \geq d(u_{m_0}, v_{m_0})/2$.
\end{itemize} 
Set $\delta := \sign ( { f_{m_0} (u_{m_0} ) } ) $. We distinguish two cases depending on the sign of $f_{n_0}(u_{n_0}) - C_{m_0}$. First, assume that $f_{n_0} (u_{n_0}) < C_{m_0}$. Then letting $f := f_{n_0} + \delta f_{m_0}$, we have 
\begin{align*}
|f(u_{n_0})-f (u_{m_0})| &\geq - f_{n_0} (u_{n_0}) - \delta f_{m_0} (u_{n_0}) + f_{n_0} (u_{m_0}) + \delta f_{m_0} (u_{m_0}) \\ 
&\geq \frac{1}{2} d(u_{n_0} , v_{n_0}) - \delta f_{m_0} (u_{n_0}) + |f_{m_0}( u_{m_0}) | \\
&\geq \frac{1}{2} d(u_{n_0} , v_{n_0}) - |f_{m_0} (u_{n_0})| + \frac{1}{2} d(u_{m_0}, v_{m_0}) \\ 
&\geq \frac{1}{2} (L+\eps_0) - \frac{\eps_0}{6} + \frac{1}{2} L = L + \frac{\eps_0}{3}  > L + \frac{\eps_0}{6} > d(u_{n_0}, u_{m_0}),
\end{align*} 
which shows that $\|f\| > 1$; so it is a contradiction. Second, assume that $f_{n_0} (u_{n_0})>C_{m_0}$. Then considering $g:= f_{n_0} - \delta f_{m_0}$ and arguing similarly, we can deduce that the norm of $g$ is greater than $1$, which is again a contradiction.

\textit{Case II}. $C=B_1$. Let us restrict ourselves to $S$. Note that $v_n \neq v_m$ for every $n\neq m \in S$. Put $\eps_n = d(u_n,v_n) - L$ and fine $n_0 \in S$ so that $\eps_{n_0} < \frac{1}{5} L$. As $u_n$'s are all the same, by \cite[Lemma 3.2]{DMQR23} we can find $n_1 \neq m_1 \in S$ such that 
\begin{equation}\label{eq:n1m1}
|f_{n_1}(u_{n_1})| < \eps_{n_0}, |f_{m_1}(u_{m_1})| < \eps_{n_0}, \text{ and } d(v_{n_1}, v_{m_1})<L+\eps_{n_0}. 
\end{equation} 
Then by triangle inequality, 
\begin{itemize}
\itemsep0.3em
\item $|f_{n_1}(v_{n_1})| > L -\eps_{n_0}$ \, and \, $|f_{m_1}(v_{m_1})| > L -\eps_{n_0}$.
\end{itemize} 
By \cite[Lemma 3.1]{DMQR23} again, notice that 
\[
|f_{n_1}(v_{m_1})| < \eps_{n_0} \, \text{ and } \, |f_{m_1}( v_{n_1})| < \eps_{n_0}. 
\]
By changing the signs if necessary, we may assume that $f_{n_1} ( v_{n_1}), f_{m_1} (v_{m_1}) >0$. Consider $f := f_{n_1}-f_{m_1}$. Then  
\begin{align*}
|f(v_{n_1}) - f(v_{m_1})| &\geq f_{n_1} (v_{n_1}) + f_{m_1} (v_{m_1}) - f_{m_1} (v_{n_1}) - f_{n_1} (v_{m_1}) \\
&> 2(L-\eps_{n_0}) - 2\eps_{n_0} > L + \eps_{n_0} > d(v_{n_1}, v_{m_1}). 
\end{align*} 
This shows that the norm of $f$ is greater than $1$, which is a contradiction. 

\textit{Case III}. $C= B_2$. As a matter of fact, this case can be solved from Case II by changing the role of $u_n$ and $v_n$. 

\textit{Case IV}. $C= B_3$. Arguing similarly as in the last step of the proof of \cite[Theorem 4.1]{DMQR23}, we can show that this case forces the cardinality of $S$ to be at most $3$. This contradicts that $S$ is an infinite subset of $\mathbb{N}$.

Consequently, we obtained a contradiction in any case. This shows that there exists no $c_0$-basis in $\sna (M)$. 
\end{proof}

Let us mention that the results in the next section (see Theorem \ref{theorem:bounded-counterexample2}, Theorem \ref{theorem:unbounded-counterexample} and Theorem \ref{Main-Theorem-c0-PNA}) can be used to observe that if $M$ is any metric space satisfying the conditions in Theorem \ref{thm:counterexamples-c0-SNA-2} or $M$ is any of the metric spaces considered in \cite[Remark 10]{CJ17} and \cite[Theorem 4.4]{DMQR23}, then there exists a metric space $M_0\subset M$ such that $\pna(M_0)$ contains an isometric copy of $c_0$ (and in fact, in every metric space from \cite[Remark 10]{CJ17}, \cite[Theorem 4.4]{DMQR23}, and Corollary \ref{Thm-Negative-c0-SNA}, the metric space $M_0$ can be chosen to be $M$).


Before we closed this section, let us observe some related results involving Lipschitz-free spaces. First, we present results on the embedding of $c_0$ in $\na(\fm)$. Recall that $\sna(M)$ is isometrically contained in $\na(\fm)$, which leads us to the following natural question.

\begin{question}\label{Q-na-fm-c0}
Does $\na(\fm)$ isometrically contains $c_0$ for any infinite metric space $M$?
\end{question}

Our goal is to provide an answer to Question \ref{Q-na-fm-c0}. It is shown in \cite[Theorem 4.2]{DMQR23} that $\sna(M)$ contains $c_0$ isometrically if $M$ is not uniformly discrete; hence so does $\naf$. However, we will show that there is a (uniformly discrete) metric space $M$ such that $\na (\fm)$ does not contain $c_0$ isometrically. This claim will follow from a general result on the convex integral of molecules. For the detailed explanation of notations, we refer the reader to \cite{APSpre}.

Let us denote by $C_b (\Omega)$ the space of all bounded continuous real-valued functions on a Hausdorff space $\Omega$. Let $M$ be a complete pointed metric space. 
Recall that the \textit{de Leeuw transform} is a linear isometry $\Phi : \lip(M) \rightarrow C_b (\widetilde{M}) = C(\beta \widetilde{M})$ given by 
\[
(\Phi f)(p,q) = S(f,p,q)
\]
for each $(p,q) \in \widetilde{M}$, where $\beta \widetilde{M}$ denotes the Stone-\v{C}ech compactification of $\widetilde{M}$. Note that $\widetilde{M}$ is a Borel set of $\beta\widetilde{M}$ when $M$ is complete (see, for instance, \cite[Proposition 2.6]{APSpre} and \cite[Theorem 24.12, Theorem 24.13]{Willard70}). 
An element $m \in \fm$ is called a \textit{convex integral of molecules} if it can be written as $\Phi^* \mu$ for some $\mu \in \mathcal{M}(\beta\widetilde{M})$ satisfying 
\begin{enumerate}[label=(C\arabic*)]
\itemsep0.3em
\item $\mu \geq 0$ and $\|m\| = \|\mu\|$; \label{C1}
\item $\mu$ is concentrated on $\widetilde{M}$, i.e., $\mu \vert_{\widetilde{M}} = \mu$. \label{C2}
\end{enumerate}

\begin{proposition}\label{prop:convex_integral}
Let $M$ be a complete pointed metric space. Suppose that every element of $\fm$ is a convex integral of molecules. Then $f \in \sna(M)$ whenever $T_f \in \na(\fm)$. 
\end{proposition}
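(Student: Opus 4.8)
The plan is to unwind what it means for $T_f$ to attain its norm, rewrite the attaining element as a convex integral of molecules via the hypothesis, and then run an equality-in-the-averaging argument. Since the range space is $\bbr$, the operator $T_f \in \fm^* = \lip(M)$ is exactly the functional dual to $f$, so $T_f \in \na(\fm)$ means there is some $m \in S_{\fm}$ with $|\langle m, f\rangle| = \|T_f\| = \|f\|$. We may assume $\|f\|>0$ (the case $f=0$ being trivial, as then $|S(f,p,q)|=0=\|f\|$ for every pair), and, replacing $m$ by $-m$ if necessary, arrange $\langle m, f\rangle = \|f\|$; note that $-m$ is still a convex integral of molecules by hypothesis, so this sign normalization is harmless.

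Next I would invoke the hypothesis to write $m = \Phi^* \mu$ for some $\mu \in \mathcal{M}(\beta\widetilde{M})$ satisfying \ref{C1} and \ref{C2}; in particular $\mu \geq 0$, $\mu(\widetilde{M}) = \|\mu\| = \|m\| = 1$, and $\mu$ is concentrated on $\widetilde{M}$. Using the adjoint relation for the de Leeuw transform, the pairing becomes
\[
\|f\| = \langle m, f\rangle = \langle \Phi^* \mu, f\rangle = \int_{\beta\widetilde{M}} (\Phi f)\, d\mu = \int_{\widetilde{M}} S(f,p,q)\, d\mu(p,q),
\]
where the last equality uses \ref{C2} to restrict the integral to $\widetilde{M}$, on which $\Phi f$ agrees with the slope $S(f,\cdot,\cdot)$. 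Here $\widetilde{M}$ is a Borel subset of $\beta\widetilde{M}$ by completeness of $M$, and $S(f,\cdot,\cdot)$ is continuous, hence $\mu$-measurable, so the integral is well defined.

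The core step is the equality case. Since $|S(f,p,q)| \leq \|f\|$ for every $(p,q) \in \widetilde{M}$ and $\mu$ is a positive measure of total mass one, the function $(p,q) \mapsto \|f\| - S(f,p,q)$ is nonnegative and integrates to zero against $\mu$. Therefore $S(f,p,q) = \|f\|$ for $\mu$-almost every $(p,q) \in \widetilde{M}$. As $\mu \neq 0$, the set of such pairs has positive measure, and in particular is nonempty, so we can select a genuine pair $(p,q) \in \widetilde{M}$ with $|S(f,p,q)| = \|f\|$, which is precisely the assertion that $f \in \sna(M)$.

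I do not expect any serious obstacle in this argument; the only points requiring care are the sign normalization (pairing against $+\|f\|$ while keeping $m$ a convex integral of molecules), the identification of $\langle \Phi^*\mu, f\rangle$ with $\int_{\widetilde{M}} S(f,\cdot,\cdot)\, d\mu$ via \ref{C2}, and the measurability of the slope so the averaging is valid. Once these are in place, the conclusion follows immediately from the elementary fact that a nonnegative function with vanishing integral against a nonzero positive measure must vanish on a set of full, hence nonempty, measure.
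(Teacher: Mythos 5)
Your proof is correct and follows essentially the same route as the paper: decompose the norming element as $\Phi^*\mu$ with $\mu \geq 0$ concentrated on $\widetilde{M}$, express the pairing as $\int_{\widetilde{M}} S(f,p,q)\,d\mu$, and conclude by the equality case that $|S(f,p,q)|=\|f\|$ on a set of positive (hence nonempty) $\mu$-measure. The only cosmetic difference is that you normalize the sign of $m$ first, whereas the paper works directly with $|T_f(m)|$ and a chain of inequalities involving $|\mu|$; both arguments are equivalent.
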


\begin{proof}
Let $f \in \lip (M)$ and suppose that $|T_f (m)| = \|T_f\|$ for some $m\in \fm$ with $\|m\|=1$. As $m$ is a convex integral of molecules, there exists $\mu \in \mathcal{M}(\beta\widetilde{M})$ satisfying \ref{C1} and \ref{C2}. By \cite[Proposition 2.6]{APSpre}, we have 
\begin{align*}
\|T_f\| = |T_f (m)| = \langle \Phi^* \mu, T_f \rangle &= \int_{\widetilde{M}} \langle m_{p,q}, T_f \rangle\,d\mu(p,q) \\
&\leq \int_{\widetilde{M}} \left|S(f,p,q) \right| \,d|\mu|(p,q) \leq \|f\| \| \mu\| = \|f\|. 
\end{align*}
This implies that $|S(f,p,q)| = \|f\|$ almost everywhere, in particular, $f \in \sna(M)$.
\end{proof}

Recall that a metric space $M$ is called \textit{radially discrete} if there exists some $\alpha > 0$ such that $d(p,q)\geq \alpha d(p, 0)$ for every $(p,q)\in\widetilde{M}$. By \cite[Proposition 3.3]{APSpre} and \cite[Corollary 3.4]{APSpre}, every element of $\fm$ is a convex series of molecules if either $\fm$ is isometric to $\ell_1(\Gamma)$ for some index set $\Gamma$ or $M$ is radially discrete. If $M$ is radially discrete and the distinguished point $0$ is isolated, then $M$ is called \textit{radially uniformly discrete}. Note that all bounded uniformly discrete metric spaces are radially uniformly discrete. Thus, we have the following direct consequence of Proposition \ref{prop:convex_integral}.

\begin{corollary}\label{cor:convex_integral}
Let $M$ be a complete pointed metric space. Suppose that either $M$ is radially discrete or $\fm$ is isometric to $\ell_1 (\Gamma)$ for some index set $\Gamma$. Then $f \in \sna(M)$ whenever $T_f \in \na(\fm)$. 
\end{corollary}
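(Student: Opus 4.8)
The plan is to derive Corollary \ref{cor:convex_integral} directly from Proposition \ref{prop:convex_integral}, whose hypothesis is precisely that \emph{every element of $\fm$ is a convex integral of molecules}. So the entire task reduces to verifying that each of the two stated situations forces this hypothesis to hold; once that is done, the conclusion ``$f\in\sna(M)$ whenever $T_f\in\na(\fm)$'' is immediate by invoking the proposition.

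First I would recall the two external inputs from \cite{APSpre} quoted just above the corollary. By \cite[Corollary 3.4]{APSpre}, if $M$ is radially discrete then every element of $\fm$ is a convex \emph{series} of molecules, and by \cite[Proposition 3.3]{APSpre}, the same holds when $\fm$ is isometric to $\ell_1(\Gamma)$. The one bookkeeping point to address is that the proposition is phrased in terms of a convex \emph{integral} of molecules (the representation $\Phi^*\mu$ with $\mu\ge 0$, $\|m\|=\|\mu\|$, and $\mu$ concentrated on $\widetilde M$), whereas these two results are stated for convex \emph{series}. I would note that a convex series of molecules, say $m=\sum_n \lambda_n m_{p_n,q_n}$ with $\lambda_n\ge 0$ and $\sum_n\lambda_n=\|m\|$, is a special case of a convex integral: it corresponds to the discrete measure $\mu=\sum_n\lambda_n\,\delta_{(p_n,q_n)}$ on $\beta\widetilde M$, which is nonnegative, satisfies $\|\mu\|=\sum_n\lambda_n=\|m\|$ (giving \ref{C1}), and is concentrated on $\widetilde M$ since each atom sits at a point $(p_n,q_n)\in\widetilde M$ (giving \ref{C2}). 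Hence in both cases the hypothesis of Proposition \ref{prop:convex_integral} is met.

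With that observation in place, the proof is short: assume $M$ is radially discrete or $\fm\cong\ell_1(\Gamma)$, conclude via \cite[Proposition 3.3]{APSpre} or \cite[Corollary 3.4]{APSpre} (interpreted through the preceding remark) that every element of $\fm$ is a convex integral of molecules, and apply Proposition \ref{prop:convex_integral} to obtain that $T_f\in\na(\fm)$ implies $f\in\sna(M)$. I would also remark that the completeness of $M$ is what guarantees $\widetilde M$ is a Borel subset of $\beta\widetilde M$, so that the de Leeuw machinery underlying Proposition \ref{prop:convex_integral} is legitimate; this is already built into the standing hypothesis of both the proposition and the corollary.

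The only genuine subtlety---and the step I would flag as the main obstacle---is reconciling the ``series'' formulation in the cited results with the ``integral'' formulation demanded by Proposition \ref{prop:convex_integral}. This is not a deep issue but it must be stated carefully: one should check that the discrete measure built from a convex series genuinely satisfies both \ref{C1} and \ref{C2}, in particular that $\mu\vert_{\widetilde M}=\mu$ holds because no mass escapes to the remainder $\beta\widetilde M\setminus\widetilde M$. If \cite{APSpre} already defines convex series as a particular instance of convex integrals (which is the natural convention), this reduces to a single sentence citing that fact. Everything else is a direct appeal to the two prior results and to Proposition \ref{prop:convex_integral}.
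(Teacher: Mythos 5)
Your proof is correct and follows essentially the same route as the paper: the paper likewise observes (citing \cite[Proposition 3.3]{APSpre} and \cite[Corollary 3.4]{APSpre}) that in either situation every element of $\fm$ is a convex series of molecules, and then invokes Proposition \ref{prop:convex_integral}. Your explicit reconciliation of ``convex series'' with ``convex integral'' via the discrete measure $\mu=\sum_n\lambda_n\,\delta_{(p_n,q_n)}$ is a point the paper leaves implicit, and it is handled correctly.
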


This leads us to a negative answer to Question \ref{Q-na-fm-c0}.

\begin{corollary}\label{cor:convex_integral2}
There exists an infinite metric space $M$ such that $\na (\F (M))$ does not contain $c_0$ isometrically.
\end{corollary}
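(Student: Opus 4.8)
The plan is to combine the machinery developed earlier in this section, namely Corollary \ref{cor:convex_integral} together with one of the concrete negative results for the isometric $c_0$-embedding into $\sna(M)$. The key observation is that Corollary \ref{cor:convex_integral} gives us, under suitable hypotheses on $M$, the identity $\na(\fm) = \sna(M)$ at the level of norm-attainment: if $T_f \in \na(\fm)$ then $f \in \sna(M)$, and the reverse implication always holds by the linearization property of $\fm$. Thus, for such an $M$, the set $\na(\fm)$ (viewed inside $\lip(M)$ via $f \mapsto T_f$) coincides isometrically with $\sna(M)$, and any failure of $c_0$-embeddability for $\sna(M)$ transfers immediately to $\na(\fm)$.

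Concretely, first I would choose $M$ to be a complete, bounded, uniformly discrete pointed metric space for which $c_0$ is \emph{not} isometrically contained in $\sna(M)$. Such a space is furnished, for example, by Corollary \ref{Thm-Negative-c0-SNA} (or by Theorem \ref{thm:counterexamples-c0-SNA-2}): taking $M = \{p_n\}_n$ with a decreasing distance matrix $d(p_n, p_m) = L + \phi(n,m)$ satisfying the stated hypotheses produces a bounded uniformly discrete space with no isometric copy of $c_0$ in $\sna(M)$. One must be slightly careful to select $M$ complete, since Corollary \ref{cor:convex_integral} requires completeness; a finite or countable uniformly discrete space with the distinguished point isolated is automatically complete, so this causes no trouble.

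Next I would verify that this $M$ falls under the scope of Corollary \ref{cor:convex_integral}. Since $M$ is bounded and uniformly discrete, it is radially uniformly discrete (as remarked in the text, all bounded uniformly discrete metric spaces are radially uniformly discrete, hence radially discrete), so Corollary \ref{cor:convex_integral} applies and yields: $f \in \sna(M)$ whenever $T_f \in \na(\fm)$. Together with the always-valid implication $f \in \sna(M) \Rightarrow T_f \in \na(\fm)$, this shows that $\na(\fm)$, as a subset of $\lip(M)$, equals $\sna(M)$. Consequently, if $\na(\fm)$ contained an isometric copy of $c_0$, then so would $\sna(M)$, contradicting the choice of $M$. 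Therefore $\na(\fm)$ does not contain $c_0$ isometrically, which is exactly the assertion.

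The main conceptual obstacle — already resolved by Proposition \ref{prop:convex_integral} and Corollary \ref{cor:convex_integral} earlier in the section — is establishing that norm-attainment of $T_f$ on \emph{some} element of the unit ball forces norm-attainment at an actual molecule, i.e.\ strong norm-attainment of $f$. This is precisely where the convex-integral-of-molecules representation is essential: it guarantees that the optimizing element $m = \Phi^*\mu$ decomposes as a positive average of genuine molecules, so equality in the integral estimate propagates down to $|S(f,p,q)| = \|f\|$ on a set of positive measure, giving an honest pair of points attaining the slope. Given that this step is already packaged as Corollary \ref{cor:convex_integral}, the only remaining care in the present proof is the bookkeeping of ensuring the chosen $M$ is simultaneously complete, radially discrete, and free of isometric $c_0$ inside $\sna(M)$.
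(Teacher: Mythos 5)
Your proof is correct and follows essentially the same route as the paper: the paper likewise picks a radially discrete metric space with no isometric copy of $c_0$ in $\sna(M)$ (citing the examples behind Corollary \ref{Thm-Negative-c0-SNA}) and invokes Corollary \ref{cor:convex_integral} to transfer the obstruction to $\na(\F(M))$. Your extra bookkeeping (completeness, boundedness plus uniform discreteness implying radial discreteness) is accurate and matches the remarks preceding that corollary.
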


\begin{proof}
Consider any radially discrete metric space $M$ for which $\sna(M)$ does not contain an isometric copy of $c_0$ (for a concrete example, see \cite[Theroem 4.1, Theorem 4.4]{DMQR23}). Then Corollary \ref{cor:convex_integral} shows that $\na (\F (M))$ cannot contain an isometric copy of $c_0$. 
\end{proof}

In Proposition \ref{prop:convex_integral} we have shown that if $M$ is a complete metric space such that every element of $B_{\fm}$ is a convex integral of molecules, then $T_f\in\na(\fm)$ if and only if $f\in\sna(M)$. Therefore, for those $M$ satisfying that $\fm$ is isometric to $\ell_1(\Gamma)$, we have
\begin{equation}\label{eq:l_1(gamma)}
\sna(M)=\na(\fm) = \na (\ell_1(\Gamma)),
\end{equation}
under the isometric equivalence between $\lip(M)$ and $\call(\calf(M))$. It is clear that $c_0 (\Gamma)$ is isometrically contained into $\na (\ell_1(\Gamma))$ since $c_0(\Gamma)$ is an isometric predual of $\ell_1 (\Gamma)$. Thus we have from \eqref{eq:l_1(gamma)} the following for complete metric spaces: 
\begin{enumerate}[label=($\dagger$)]
\item \textit{$\sna(M)$ contains  $c_0(\Gamma)$ isometrically  when $\fm$  is isometric to  $\ell_1(\Gamma)$}. \label{eq:rmk_tree}
\end{enumerate}

Note from \cite[Theorem 3.2]{Godard} that if $M$ is a subset of an $\bbr$-tree $(T,d)$ such that $\overline{M}$ contains all branching points of $T$, then $\mathcal{F}(M)$ is isometric to $L_1 (\mu_M)$ for some measure $\mu_M$. Recall that an \textit{$\bbr$-tree} is a metric space $(T,d)$ that is geodesic and satisfies the \textit{$4$-point conditions}:
\[
d(p,q)+d(r,s) \leq \max \{ d(p,s)+d(q,r), d(q,s)+d(p,r)\}, \, \text{ for all } p,q,r,s \in T.
\]
A point $b \in T$ is said to be a \textit{branching point} if $T\setminus \{b\}$ has at least three connected components.
Moreover, it was characterized in \cite[Theorem 5]{DKP16} that $\fm$ is isometric to some $\ell_1(\Gamma)$ if and only if $M$ is a negligible subset of an $\bbr$-tree of cardinality $|\Gamma|-1$ which contains all the branching points (see \cite{DKP16} for the definition of negligible sets). 
For more details on $\bbr$-trees, see \cite{DKP16, Godard}.

As an application of our previous results, we observe the following which provides, 
This approach, combined with \cite[Theorem 5]{DKP16}, provides a more general conclusion than \ref{eq:rmk_tree} for countable index set $\Gamma$.

\begin{theorem}\label{prop:trees}
For any infinite pointed metric space $M$ that is a subset of an $\bbr$-tree and contains all the branching points, the set $\sna(M)$ contains $c_0$ isometrically.
\end{theorem}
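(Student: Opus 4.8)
The plan is to reduce the statement to a dichotomy coming from the characterization in \cite[Theorem 5]{DKP16} of when a Lipschitz-free space over a tree is isometric to an $\ell_1(\Gamma)$. Since $M$ is a subset of an $\bbr$-tree $T$ containing all its branching points, Godard's theorem \cite[Theorem 3.2]{Godard} gives that $\fm$ is isometric to $L_1(\mu_M)$ for a length-type measure $\mu_M$, and by \cite[Theorem 5]{DKP16} the finer statement $\fm \cong \ell_1(\Gamma)$ holds precisely when $M$ is negligible. Accordingly, I would split the argument into two cases, depending on whether or not $M$ is negligible.

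First, suppose $M$ is negligible, so that $\fm$ is isometrically isomorphic to $\ell_1(\Gamma)$ for some index set $\Gamma$. As $M$ is infinite, $\fm$ is infinite-dimensional, hence $\Gamma$ is infinite and $c_0$ embeds isometrically into $c_0(\Gamma)$. Property \ref{eq:rmk_tree} then gives that $\sna(M)$ contains $c_0(\Gamma)$, and in particular $c_0$, isometrically. This is the easy half, exactly the situation already handled by \ref{eq:rmk_tree}; it is worth noting that it genuinely requires the $\ell_1$-structure rather than Lemma \ref{Lemma:Aligned}, since it covers star-like configurations (a center together with infinitely many leaves at distance one), which contain no infinite sequence of aligned points.

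The substance lies in the remaining case, where $M$ is not negligible. Here I would argue that non-negligibility forces $M$ to accumulate along a single geodesic. Using the description of $\mu_M$ through the length measure of $T$, the failure of negligibility means that the non-atomic part of $\mu_M$ is carried by a nondegenerate arc $[x,y]\subseteq T$ along which $M$ has infinitely many points. Points lying on a common arc of an $\bbr$-tree are totally ordered by betweenness, and three such points $t_1,t_2,t_3$ in increasing order satisfy $d(t_1,t_3)=d(t_1,t_2)+d(t_2,t_3)$; thus I can extract a monotone, hence aligned, sequence $\{p_n\}\subseteq M$. Lemma \ref{Lemma:Aligned} then produces an isometric copy of $c_0$ in $\sna(M)$. (Alternatively, non-negligibility entails that $M$ is not uniformly discrete, so one could instead invoke \ref{SS}.)

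The main obstacle is precisely this extraction: converting ``$M$ is not negligible'' into ``infinitely many points of $M$ lie on a common arc.'' The delicate point is to rule out that the non-atomic mass is produced by points of $M$ approaching an arc through side branches rather than lying on the arc itself, and I would resolve it by unwinding the definition of negligibility from \cite{DKP16}, which pins the non-atomic part of $\mu_M$ on portions of $T$ that $M$ fills densely. Once the aligned sequence is in hand, the conclusion is immediate from the already-established Lemma \ref{Lemma:Aligned}.
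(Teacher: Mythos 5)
Your route through Godard's theorem and \cite[Theorem 5]{DKP16} is genuinely different from the paper's, which never uses the $L_1(\mu_M)$ structure: after reducing to the uniformly discrete case via \ref{SS}, the paper works combinatorially inside the minimal subtree $\conv(M)$, showing that either some point $p$ disconnects $\conv(M)$ into infinitely many components (whence Proposition \ref{prop:c0_characterizationQ} applies, with all the $q_\gamma$ equal to $p$), or one can inductively follow infinite components to produce an aligned sequence and apply Lemma \ref{Lemma:Aligned}. Your plan can be made to work, but as written it has two gaps. First, in the negligible case you invoke \ref{eq:rmk_tree}, which the paper establishes only for \emph{complete} $M$: it rests on Proposition \ref{prop:convex_integral}, where completeness is needed, and for non-complete $M$ an operator in $\na(\fm)$ need not come from a function in $\sna(M)$ (the norm may be attained only at a ``molecule of the completion''), so the conclusion $c_0(\Gamma)\subseteq\sna(M)$ does not follow as stated. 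Second, the step you yourself flag as the main obstacle --- converting non-negligibility into infinitely many points of $M$ on a common arc, or into failure of uniform discreteness --- is only sketched; it is the entire content of that case, and it requires unwinding the length measure, including the argument that accumulation onto an arc through side branches forces branching points, hence points of $M$, to accumulate on the arc itself.

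Both gaps are closed by making the paper's opening move: if $M$ is not uniformly discrete, \ref{SS} finishes the proof outright, so one may assume $M$ is uniformly discrete. Then $M$ is complete (Cauchy sequences are eventually constant), so \ref{eq:rmk_tree} becomes available; and $M$ meets every segment of the tree in a finite set, so the non-atomic part of Godard's measure vanishes and $M$ is negligible --- your non-negligible case is then vacuous. The repaired argument is simply: uniformly discrete $\Rightarrow$ $\fm\cong\ell_1(\Gamma)$ by \cite[Theorem 5]{DKP16} $\Rightarrow$ $c_0(\Gamma)\subseteq\sna(M)$ by \ref{eq:rmk_tree}. This is shorter than the paper's proof but leans on heavier machinery (the convex-integral results of \cite{APSpre} and the characterization of \cite{DKP16}), whereas the paper's proof is self-contained and exhibits the $c_0$-basis explicitly. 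Your observation that star configurations admit no infinite aligned sequence, so Lemma \ref{Lemma:Aligned} alone cannot suffice, is correct --- that is exactly why the paper needs its first case via Proposition \ref{prop:c0_characterizationQ}.
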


\begin{proof}
By the fact \ref{SS}, it is enough to assume that $M$ is uniformly discrete. Let $\conv(M)$ denote a minimal $\bbr$-tree containing $M$. For every two points $p,q\in M$, if $[p,q]$ denotes the unique minimal-length arc connecting $p$ and $q$, let $[p,q]_M:=[p,q]\cap M$. 

We first claim that for any $p\in M$ and every connected component $C$ of $\conv(M)\backslash\{p\}$, there exists a point $q\in C$ such that $[p,q]_M=\{p,q\}$. Indeed, this is clear if $C\cap M$ is finite, so suppose that $C\cap M$ is infinite. 
Consider a sequence of points $\{r_n\}_{n=1}^{\infty}\subset C\cap M$ such that $d(p, r_n)$ converges to $\inf\{d(p, r):\, r\in C\cap M\}$, which is a positive number as $M$ is uniformly discrete. 
Note that for every two points $x,y\in C\cap M$, there exists a point $x\wedge y \in [p, x]\cap [p, y]\backslash \{p\}$ such that $[p, x]\cap [p, y]=[p, x\wedge y]$ and $x\wedge y\in [x,y]$. If $x \wedge y$ is either $x$ or $y$, then it is obvious that $x \wedge y \in C \cap M$. If this is not the case, then $x \wedge y$ would be a branching point. As $M$ contains all branching points, we obtain that $x \wedge y \in C \cap M$. Now, consider the sequence of points $\{s_n\}_{n=1}^{\infty} \subseteq C \cap M$ inductively given as follows: $s_1=r_1$, and given $s_1, \ldots, s_n$ for some $n\in\bbn$, put $s_{n+1}:=s_n\wedge r_{n+1}$. It is clear that $d(p, s_n)$ also converges to $\inf\{d(p, r):\, r\in C\cap M\}$. Since the points $\{s_n\}_{n=1}^{\infty}$ are aligned by construction and $M$ is uniformly discrete, we deduce that $\{s_n\}_{n=1}^{\infty}$ must be eventually constant.
This proves that there exists some $q \in C\cap M$ so that $[p,q]_M=\{p,q\}$.

To finish the proof, we shall make use of Proposition \ref{prop:c0_characterizationQ} and Lemma \ref{Lemma:Aligned}. Consider the following two cases.
\begin{enumerate}
\itemsep0.3em
\item Suppose that there exists $p\in M$ such that the set $\conv(M)\backslash \{p\}$ has an infinite number of connected components. Then applying the argument above, we can find a sequence of distinct points $\{p_n\}_{n=1}^{\infty}\subset M$ satisfying that $[p_n, q]_M=\{p_n, q\}$ for each $n\in\mathbb{N}$. Now,  Proposition \ref{prop:c0_characterizationQ} can be applied by considering $q_n:=q$ for all $n\in\bbn$.
\item Suppose that for all $p\in M$, the set $\conv(M)\backslash \{p\}$ has a finite number of connected components, $C_{p,1}, \ldots, C_{p,n_p}$ with  $n_p\in\bbn$. Fix a point $p\in M$ and let $C_{p,1}, \ldots, C_{p,n_p}$ be the connected components of $\conv(M)\backslash \{p\}$. 
Now, we construct a sequence of aligned points of $M$ starting on $p$. First, take $1\leq k_1\leq n_p$ such that $C_{p,k_1}\cap M$ is infinite, and let $q_1\in C_{p,k_1}$ such that $[p, q_1]_M=\{p, q_1\}$. If $C_{q_1,1}, \ldots, C_{q_1,n_{q_1}}$ are the connected components of $C_{p,k_1}\backslash \{q_1\}$, then there is $1\leq k_2\leq n_{q_1}$ such that $C_{q_1,k_2}\cap M$ is infinite. Take $q_2\in C_{q_1, k_2} \cap M$ such that $[q_1, q_2]_M=\{q_1, q_2\}$. Inductively, given $q_1, \ldots, q_j \in M$ for some $j\in\bbn$, if $C_{q_j,1}, \ldots, C_{q_j,n_{q_j}}$ are the connected components of $C_{q_{j-1},k_j}\backslash \{q_{j}\}$, then there is $1\leq k_{j+1}\leq n_{q_j}$ such that $C_{q_j,k_{j+1}}\cap M$ is infinite, and let $q_{j+1}\in C_{q_j, k_{j+1}} \cap M$ such that $[q_j, q_{j+1}]_M=\{q_j, q_{j+1}\}$. Therefore, $M$ contains an aligned sequence of points, and so we can apply Lemma \ref{Lemma:Aligned}. \qedhere
\end{enumerate}
\end{proof}

\section{Embedding of \texorpdfstring{$c_0$}{c0} in pointwise norm-attaining Lipschitz functions}\label{Section-PNA-c0}

Let us start by recalling the facts \ref{S}--\ref{SSS}, Theorem \ref{theorem:negative-example-ell1}, and Theorem \ref{thm:counterexamples-c0-SNA-2}. 

\begin{itemize}
\itemsep0.3em
\item For any infinite pointed metric space $M$, we have that $c_0$ is always isomorphically contained in $\sna(M)$. Moreover, if $M$ is not uniformly discrete, then $c_0$ is isometrically contained in $\sna(M)$.  
\item However, there exist some metric spaces $M$ such that $c_0$ is not isometrically contained in $\sna(M)$.
\item There are metric spaces $M$ such that $\ell_\infty$ is not isometrically contained in $\pna(M)$.
\end{itemize}

These motivate us to pose the following natural question.

\begin{question}\label{Q:Main}
Is it true for every infinite pointed metric space $M$ that $c_0$ is isometrically contained in $\pna(M)$?
\end{question}

In this section, we provide four distinct results, namely Proposition \ref{prop:pna_c_0_33}, Theorems \ref{theorem:bounded-counterexample2}, \ref{thm:counter_to_bounded-counterexample2}, and \ref{theorem:unbounded-counterexample}, all of which concern the isometric embedding of $c_0$ into $\pna(M)$ in the setting of a uniformly discrete metric space $M$. Consequently, we observe that for all the explicitly known metric spaces $M$ such that $c_0$ is not isometrically embedded in $\sna(M)$, it is actually possible to embed $c_0$ isometrically in $\pna(M)$. Finally, we prove the main result, Theorem \ref{Main-Theorem-c0-PNA}, which gives a partial answer to Question \ref{Q:Main} by showing that for any infinite pointed metric space $M$ there is a metric space $M_0 \subseteq M$ for which $\sna(M_0)$ contains an isometric copy of $c_0$. 

We start with the following analogue of Proposition \ref{prop:c0_characterizationQ} for pointwise norm-attaining Lipschitz functions. The proof of the result is an easy adaptation of Proposition \ref{prop:c0_characterizationQ} and \cite[Lemma 3.3]{DMQR23}. 

\begin{proposition}\label{prop:pna_c_0_33}
Let $M$ be an infinite uniformly discrete pointed metric space, and let $\{p_n\}_{n=1}^{\infty}$ be a sequence of distinct points in $M$. The following claims are equivalent:
\begin{itemize}
\item[\textup{(a)}] $\pna(M)$ contains $c_0$ isometrically with a sequence of functions $\{f_n\}_{n=1}^{\infty}$ that are isometrically a canonical basis of $c_0$ inside $\pna(M)$ and given by
\[
f_n( p_{n} ) = a_n, \quad f_n (p) = 0 \, \text{ if } p \in M \setminus \{p_n\}
\]
for some numbers $\{a_n\}_{n=1}^{\infty}\subset \bbr$.
\item[\textup{(b)}] For each $m\neq n\in \bbn$, the following holds
$$d(p_n, p_m)\geq R(p_n) + R(p_m).$$
\end{itemize}
\end{proposition}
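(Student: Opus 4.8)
The plan is to prove the equivalence of (a) and (b) for these ``one-point spike'' functions, following closely the template of Proposition \ref{prop:c0_characterizationQ} but accounting for the weaker requirement of pointwise (rather than strong) norm-attainment. The key simplification is that each $f_n$ is supported on the single point $p_n$, so the only nonzero slopes of $f_n$ are those involving $p_n$, and the largest such slope in absolute value is $|a_n|/R(p_n)$.

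For the implication $(b)\Rightarrow(a)$, I would first observe that the condition $\|f_n\|=1$ forces the choice $|a_n|=R(p_n)$ (up to sign we may take $a_n=R(p_n)$), since $\|f_n\|=\sup_{q\neq p_n}|a_n|/d(p_n,q)=|a_n|/R(p_n)$. Then, given $a=(a_n)_n\in c_0$, set $f_a:=\sum_n a_n f_n$ and compute $S(f_a,p,q)$ by cases. If neither $p$ nor $q$ is any $p_n$, the slope is $0$. If exactly one of them, say $p=p_n$, lies in $\{p_k\}$ and $q\notin\{p_k\}$, then $|S(f_a,p,q)|=|a_n|R(p_n)/d(p_n,q)\leq|a_n|\leq\|a\|$. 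The only genuinely two-point interaction is when $p=p_n$ and $q=p_m$ with $n\neq m$, where
\[
|S(f_a,p_n,p_m)|=\frac{|a_n R(p_n)-a_m R(p_m)|}{d(p_n,p_m)}\leq\frac{|a_n|R(p_n)+|a_m|R(p_m)}{d(p_n,p_m)}\leq\|a\|\cdot\frac{R(p_n)+R(p_m)}{d(p_n,p_m)}\leq\|a\|
\]
using exactly hypothesis (b). This shows $\|f_a\|\leq\|a\|$, and the reverse inequality together with pointwise norm-attainment follows because $\|a\|=\lim|a_{n_k}|$ for some subsequence (if the sup is not attained) or $\|a\|=|a_{k_0}|$ (if it is): in either case one checks that $\sup_{q\neq p_{k}}|S(f_a,p_{k},q)|$ approaches or equals $\|a\|$ at a suitable base point $p_k$, so $f_a\in\pna(M)$ with $\|f_a\|=\|a\|$. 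I expect this last point, establishing that the norm is \emph{attained pointwise} rather than merely approximated, to be the main obstacle, since for $c_0$-sequences whose supremum is not attained one must argue carefully that the supremum of slopes based at a single point equals $\|a\|$; the uniform discreteness (which makes $R(p_n)$ bounded below and $d(p_n,q)$ bounded above on the relevant pairs) is what rescues the argument.

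For the converse $(a)\Rightarrow(b)$, I would argue exactly as in Proposition \ref{prop:c0_characterizationQ}: the hypothesis that $\{f_n\}$ is isometrically equivalent to the canonical basis of $c_0$ means in particular that $\|f_n\|=1$ for each $n$ and $\|f_n\pm f_m\|=1$ for all $n\neq m$. The condition $\|f_n\|=1$ gives $|a_n|=R(p_n)$ as above. Evaluating $\|f_n\pm f_m\|\geq|S(f_n\pm f_m,p_n,p_m)|$ with the appropriate sign chosen to make the numerator $|a_n|R(p_n)/R(p_n)+|a_m|R(p_m)/R(p_m)$-type term add constructively yields
\[
\frac{R(p_n)+R(p_m)}{d(p_n,p_m)}\leq 1,
\]
which is precisely (b). The bulk of the work is the forward direction, and the delicate pointwise-attainment verification there is where I would concentrate the writing.
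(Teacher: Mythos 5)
Your argument is correct and is precisely the ``easy adaptation'' of Proposition \ref{prop:c0_characterizationQ} that the paper invokes without writing out: the one-point spikes make every slope estimate reduce to hypothesis (b), and the relaxation from strong to pointwise attainment is exactly that $R(p_n)$ need only be an infimum rather than an attained minimum. The one obstacle you single out as the main difficulty --- coefficient sequences $a\in c_0$ whose supremum norm is not attained --- is vacuous, since every nonzero $a\in c_0$ satisfies $\|a\|_\infty=|a_{k_0}|$ for some $k_0$ (because $a_n\to 0$), and at that $p_{k_0}$ the pointwise supremum equals $|a_{k_0}|$: condition (b) together with uniform discreteness forces the near-minimizers $q$ of $d(p_{k_0},\cdot)$ to lie outside $\{p_m\}_{m\neq k_0}$, where the slope is exactly $|a_{k_0}|R(p_{k_0})/d(p_{k_0},q)\to|a_{k_0}|$.
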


Again, there are simple uniformly discrete spaces that do not satisfy the condition in Proposition \ref{prop:pna_c_0_33}, such as the space $M$ where $d(x,y)=1$ for all $x\neq y \in M$. However, this metric space $M$ satisfies the assumption in Theorem \ref{theorem:c0-condition-1-improved}; hence $\sna (M)$ contains an isometric copy of $c_0$.

The following result shows that there is a wide class of metric spaces $M$, including all the spaces from Corollary \ref{Thm-Negative-c0-SNA}, such that $c_0$ is isometrically contained in $\pna(M)$. In particular, this is the case for the metric space considered in \cite[Theorem 4.1]{DMQR23} ($M=\{p_n\}_{n=1}^{\infty}$ endowed with the metric $d(p_n, p_m):=1+1/ {\max\{m,n\}}$), which is not covered by Proposition \ref{prop:pna_c_0_33}.



\begin{theorem}\label{theorem:bounded-counterexample2}
Let $M$ be an infinite uniformly discrete pointed metric space containing a sequence of distinct points $\{p_n\}_{n=1}^{\infty}$ satisfying these conditions:
\begin{enumerate}
\itemsep0.3em
\item[\textup{(i)}] $d(p_n, p_m)$ is monotonically decreasing with respect to $m$ and $n$; 
\item[\textup{(ii)}] $\displaystyle \displaystyle R(p_k) \geq \lim_m d(p_k,p_m) - \frac{\lim_n\lim_m d(p_n,p_m)}{2}$ for each $k \in \mathbb{N}$. 
\item[\textup{(iii)}] $\displaystyle \lim_m d(p_k, p_m) + \lim_m d(p_\ell, p_m) \leq \lim_n \lim_m d(p_n,p_m) + d(p_k, p_\ell)$ for every $k \neq \ell \in \mathbb{N}$. 
\end{enumerate}
Then $\pna(M)$ contains $c_0$ isometrically.
\end{theorem}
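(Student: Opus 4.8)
The plan is to generalize the construction from Theorem~\ref{theorem:c0-condition-1-improved}, where the functions were supported on pairs $\{p_\gamma,q_\gamma\}$, to functions supported on a single point $p_n$ but whose value is calibrated using the asymptotic distances $\lim_m d(p_n,p_m)$. The key idea is that pointwise norm-attainment is a weaker requirement than strong norm-attainment, so I have more freedom: I do not need the norm to be witnessed by an actual pair of points in $M$, only by a supremum of slopes from a fixed point. Concretely, for each $n\in\bbn$ I would define a bump function $f_n$ concentrated near $p_n$, set $f_n(p_n)$ to be a suitable value of order $\lim_m d(p_n,p_m)$ (shifted by the limit-of-limits $L:=\lim_n\lim_m d(p_n,p_m)$ as suggested by condition (ii)), and $f_n(p)=0$ for $p\neq p_n$. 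Then I would show that $\{f_n\}$ is isometrically equivalent to the canonical $c_0$-basis inside $\pna(M)$.

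First I would fix the normalization. Motivated by condition (ii), I expect the right choice to be
\[
f_n(p_n) := \lim_m d(p_n,p_m) - \frac{L}{2}, \qquad f_n(p)=0 \text{ for } p\neq p_n,
\]
with $L=\lim_n\lim_m d(p_n,p_m)$, and I would first verify that $\|f_n\|=1$ by computing $\sup_{q\neq p_n}|S(f_n,p_n,q)|$. Condition (ii) guarantees that $|f_n(p_n)|\le R(p_n)\cdot(\text{something})$ in the correct direction so that $\|f_n\|\le 1$, while the pointwise attainment at $p_n$ is realized by sending $q=p_m$ with $m\to\infty$, since $d(p_n,p_m)\to \lim_m d(p_n,p_m)$ and the slope tends to $1$ in the limit. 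This is exactly where the pointwise (rather than strong) notion is essential: the norm need not be attained at any single pair, only as a limiting supremum.

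Next I would take an arbitrary $a=(a_n)\in c_0$, set $f_a=\sum_n a_n f_n$ (a genuine Lipschitz function since the supports are singletons and $(a_n)\to 0$), and estimate $|S(f_a,p,q)|$ for every pair $(p,q)\in\widetilde{M}$. The relevant cases are: (1) one of $p,q$ lies outside $\{p_n\}$, handled by condition (ii) and monotonicity (i); and (2) $p=p_k$, $q=p_\ell$ with $k\neq\ell$, where I would expand
\[
|S(f_a,p_k,p_\ell)| = \frac{|a_k f_k(p_k)-a_\ell f_\ell(p_\ell)|}{d(p_k,p_\ell)} \le \|a\|\cdot\frac{f_k(p_k)+f_\ell(p_\ell)}{d(p_k,p_\ell)},
\]
and then invoke condition (iii) — which after substituting the definition of $f_n(p_n)$ rearranges exactly to $f_k(p_k)+f_\ell(p_\ell)\le d(p_k,p_\ell)$ — to bound this by $\|a\|$. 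This shows $\|f_a\|\le\|a\|$. The reverse inequality $\|f_a\|\ge\|a\|$, together with pointwise attainment of $f_a$, follows by fixing an index $k$ with $|a_k|$ close to $\|a\|$ (using $a\in c_0$, the supremum $\|a\|$ is attained at some $k_0$, or is approached) and letting $q=p_m$ with $m\to\infty$ from the fixed point $p_{k_0}$, exactly as in case~(2) of the proof of Proposition~\ref{thm:ell-infty-in-pna-01}.

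The main obstacle I anticipate is the careful bookkeeping in two places: first, confirming that the three hypotheses (i)--(iii) are precisely what is needed to force $\|f_a\|\le\|a\|$ in case~(2) and that the asymptotic definition of $f_n(p_n)$ is consistent (in particular that $f_n(p_n)\ge 0$ and that $L$ is well-defined by monotonicity of $d(p_n,p_m)$); and second, pinning down \emph{which} point realizes the pointwise attainment of the infinite sum $f_a$, since unlike the strong case this is a limiting argument and one must check that the sup of slopes from the chosen point $p_{k_0}$ genuinely reaches $\|a\|$ rather than merely approaching it from below without attainment. This subtlety — whether the supremum defining pointwise attainment is actually $\|f_a\|$ rather than strictly less — is the delicate point, and I would resolve it by exhibiting the explicit sequence $q=p_m$ along which the slope from $p_{k_0}$ converges up to $\|a\|$, using monotonicity (i) to control the tail contributions of the other $a_n f_n$ terms.
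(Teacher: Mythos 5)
There is a genuine gap, and it is fatal to the construction as written rather than a bookkeeping issue. With your normalization $f_n(p_n)=L(n)-L/2$ (where $L(n):=\lim_m d(p_n,p_m)$ and $L:=\lim_n L(n)$) and $f_n\equiv 0$ elsewhere, the slope from $p_n$ to $p_m$ is
\[
\frac{f_n(p_n)-f_n(p_m)}{d(p_n,p_m)}=\frac{L(n)-L/2}{d(p_n,p_m)}\longrightarrow \frac{L(n)-L/2}{L(n)}=1-\frac{L}{2L(n)}<1,
\]
since $L>0$ by uniform discreteness. So the sequence $q=p_m$ does \emph{not} drive the slope to $1$, and in fact $\|f_n\|=\sup_{q\neq p_n}\bigl(L(n)-L/2\bigr)/d(p_n,q)=(L(n)-L/2)/R(p_n)$, which condition (ii) only bounds \emph{above} by $1$. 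Concretely, for the motivating example $d(p_n,p_m)=1+1/\max\{n,m\}$ one gets $L(n)=L=1$, $R(p_n)=1$, so your $f_n$ has norm $1/2$, and the map $a\mapsto f_a$ is not an isometry. Your upper-bound analysis (case (2) via condition (iii)) is correct, but the lower bound $\|f_a\|\geq\|a\|$ fails, and no choice of a single value at $p_n$ alone can repair this: attaining value exactly $1$ pointwise at $p_n$ with a singleton support would force $f_n(p_n)=R(p_n)$ together with $d(p_n,p_m)\geq R(p_n)+R(p_m)$, which is precisely the hypothesis of Proposition~\ref{prop:pna_c_0_33} and is not implied by (i)--(iii).

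The missing idea is that each $f_n$ must also take a nonzero value on an \emph{infinite auxiliary set} of points so that the full oscillation across a pair is recovered. The paper indexes by prime powers: $f_n(p_{r_n})=L(r_n)-L/2$ and $f_n(p_{r_n^m})=-L/2$ for all $m>1$, with $\{r_n\}$ the primes, so the supports of the auxiliary sets are pairwise disjoint. Then the slope from $p_{r_n}$ to $p_{r_n^m}$ has numerator exactly $L(r_n)$ and denominator $d(p_{r_n},p_{r_n^m})\to L(r_n)$, so it tends to $1$; condition (ii) controls slopes to points outside the supports, and condition (iii) (equivalently $L(n)+L(m)\leq L+d(p_n,p_m)$) controls the cross terms, exactly as in your case (2). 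Your overall architecture (calibrate by $L(n)-L/2$, use (iii) for cross terms, attain pointwise in the limit $m\to\infty$) is the right one, but without the second level of the support the limiting slope falls short by the factor $1-L/(2L(n))$.
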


\begin{proof}
For simplicity, put $L(n,m) := d(p_n, p_m)$ for each $n$ and $m$, $L(n) := \lim_m L(n,m)$, and $L = \lim_n L(n)$. Note that $L \leq L(n) \leq L(n,m)$ for every $n,m \in \mathbb{N}$. 
Let $\{r_n\}_{n=1}^{\infty}$ be the sequence of all prime numbers in order. 
Let us point out the following inequalities:
\begin{equation}\label{eq:ineqLR1}
\inf_{j \in \N}  R(p_{j}) \leq R(p_{n}) \leq  L(n) \leq L(n, m).
\end{equation} 
From the assumption (ii), we have 
\begin{equation}\label{eq:ineqLR2} 
L(n) - R(p_{n}) \leq L(n) - \Big( L(n) - \frac{L}{2} \Big) = \frac{L}{2} \, \text{ for every } n \in \mathbb{N}. 
\end{equation} 
As $L = \inf_n L(n)$, notice from \eqref{eq:ineqLR2} that 
\begin{equation}\label{eq:ineqLR3}
L \leq 2 R (p_{n}) \, \text{ for every } n \in \mathbb{N}. 
\end{equation} 
Finally, the assumption (iii) is equivalent to say 
\begin{equation}\label{eq:ineqLR4}
L(n) + L(m) \leq L + L(n,m) \, \text{ for every } n\neq m \in \mathbb{N}. 
\end{equation} 

For each $n\in\bbn$, define $f_n$ as follows:
$$f_n(x):=\begin{cases}
L(r_n) - L/2  ,\quad &\text{if } x= p_{r_n}\\ 
-L/2 ,\quad &\text{if } x= p_{r_n^m} \text{ for some $m>1$}\\
0 ,\quad &\text{otherwise}. 
\end{cases}$$

Let $(a_n)_{n=1}^\infty \in c_0$. We will show that $f:= \sum_{n=1}^\infty a_nf_n$ belongs to $\PNA(M)$ and $\|f\| = \|(a_n)_n\|$. Fix any $n_0 \in \N$ such that $\|(a_n)_n\| = |a_{n_0}|$. Again, we split the cases:

(1) $p = p_{r_n}$, $q = p_{r_\ell^m}$ for some $n, \ell, m \in \mathbb{N}$. If $m =1$ and $n \neq \ell$, then by \eqref{eq:ineqLR2} and \eqref{eq:ineqLR4},  
\begin{align*} 
|S(f,p,q)| &= \frac{| a_n ( L(r_n) - \frac{L}{2} ) - a_\ell ( L(r_\ell) - \frac{L}{2}  ) |} {L(r_n, r_\ell)} \leq  \| a\| \frac{ (L(r_n) + L(r_\ell) - L)}{ L(r_n,r_\ell)} \leq \|a\| 
\end{align*} 

If $m>1$, then 
\begin{align*}
|S(f,p,q)| = \frac{| a_n ( L(r_n) - \frac{L}{2} ) - (a_\ell (-\frac{L}{2} )  ) |}{L(r_n, r_\ell^m)} \leq \|a\| \frac{ (L(r_n) - \frac{L}{2}) + \frac{L}{2}  } {L(r_n, r_\ell^m)} \leq \| a\| 
\end{align*} 
by \eqref{eq:ineqLR1}. 

(2) $p = p_{r_n^k}$, $q = p_{r_\ell^m}$ for some $k, m >1$ and $n, \ell \in \mathbb{N}$. 
In this case, we have
$$
|S(f,p,q)| = \frac{| - a_n \frac{L}{2} - (-a_\ell \frac{L}{2}) |}{L(r_n^k, r_\ell^m)} \leq \|a\| \frac{L}{L(r_n^k, r_\ell^m)} \leq \|a\|
$$
by \eqref{eq:ineqLR1} again. 

(3) $p,q \notin \{p_{r_{n}^m}:\, n,m\in\bbn\}$. \\
Clearly, we get $\sum_{n=1}^\infty a_nf_n(p)-\sum_{n=1}^\infty a_nf_n(q) = 0$.

Finally, observe that for $p = p_{r_{n_0}}$ and $q=p_{r_{n_0}^m}$, 
$$
\lim_m |S(f,p_{r_{n_0}}, p_{r_{n_0}^m})| = 
\lim_m \frac{ |a_{n_0} (L(r_{n_0}) -\frac{L}{2}) + a_{n_0} \frac{L}{2} |}{L(r_{n_0}, r_{n_0}^m)} = \frac{|a_{n_0}| L(r_{n_0}) } {L(r_{n_0})} = |a_{n_0}|.
$$
so we can conclude that $\{f_n\}_{n=1}^\infty$ is isometric to the basis of $c_0$ in $\PNA(M)$.
\end{proof}

As mentioned, Theorem \ref{theorem:bounded-counterexample2} can be applied to all the metric spaces $M= \{p_n\}_{n=1}^\infty$ satisfying the conditions in Corollary \ref{Thm-Negative-c0-SNA}.

\begin{cor}\label{cor:counterexample4.1}
There exists a bounded metric space $M$ such that $\PNA(M)$ isometrically embeds $c_0$, while $\sna(M)$ does not.
\end{cor}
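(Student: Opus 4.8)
The plan is to exhibit a single metric space $M$ that simultaneously falls under the scope of the negative result Corollary \ref{Thm-Negative-c0-SNA} (so that $\sna(M)$ cannot contain $c_0$ isometrically) and under the positive result Theorem \ref{theorem:bounded-counterexample2} (so that $\pna(M)$ does contain an isometric copy of $c_0$). The most economical choice is the concrete space already singled out in the discussion preceding Theorem \ref{theorem:bounded-counterexample2}, namely $M=\{p_n\}_{n=1}^{\infty}$ with
\[
d(p_n, p_m) := 1 + \frac{1}{\max\{n,m\}} \quad \text{for all } n \neq m,
\]
which is the space from \cite[Theorem 4.1]{DMQR23}. This space is clearly bounded (all distances lie in $(1,2]$) and uniformly discrete (all distances exceed $1$).

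The next step is to verify the two hypotheses. For the negative direction, I would check the conditions of Corollary \ref{Thm-Negative-c0-SNA}: the quantity $d(p_n,p_m)=1+1/\max\{n,m\}$ is decreasing in both $n$ and $m$, one has $\lim_m d(p_n,p_m)=1=:L>0$ for each fixed $n$, and $d(p_n,p_m)>1=L$ whenever $n\neq m$. Hence Corollary \ref{Thm-Negative-c0-SNA} applies and $\sna(M)$ does not contain $c_0$ isometrically. For the positive direction, I would confirm the three conditions of Theorem \ref{theorem:bounded-counterexample2}. Condition (i) is the monotone decrease just noted. For the remaining two, one computes $L(n)=\lim_m d(p_n,p_m)=1$ for every $n$, so $L=\lim_n L(n)=1$, and $R(p_n)=\inf_{m\neq n}d(p_n,p_m)$ equals the infimum of $1+1/\max\{n,m\}$ over $m\neq n$, which tends to $1$ as $m\to\infty$, giving $R(p_n)=1$. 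Then condition (ii) reads $R(p_k)=1\geq L(k)-L/2=1-\tfrac12=\tfrac12$, which holds, and condition (iii) reads $L(k)+L(\ell)=2\leq L+d(p_k,p_\ell)=1+(1+1/\max\{k,\ell\})$, which also holds. Therefore Theorem \ref{theorem:bounded-counterexample2} applies and $\pna(M)$ contains $c_0$ isometrically.

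I do not expect any genuine obstacle here: the corollary is essentially a juxtaposition of the two previously established theorems on a single well-chosen example, so the entire content is the routine arithmetic verification of the hypotheses, together with the observation that this space is simultaneously bounded, uniformly discrete, covered by Corollary \ref{Thm-Negative-c0-SNA}, and covered by Theorem \ref{theorem:bounded-counterexample2}. The only point requiring a little care is computing $R(p_n)$ correctly as an infimum (which is attained in the limit rather than at a single point) and confirming that each inequality is satisfied with room to spare, as done above. Alternatively, one could simply invoke the remark preceding the corollary, which already asserts that every space satisfying the conditions of Corollary \ref{Thm-Negative-c0-SNA} also satisfies those of Theorem \ref{theorem:bounded-counterexample2}, and observe that the displayed space is bounded; this reduces the proof to a one-line citation of the two results.
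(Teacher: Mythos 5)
Your proposal is correct and matches the paper's (implicit) argument exactly: the corollary is obtained by applying Corollary \ref{Thm-Negative-c0-SNA} and Theorem \ref{theorem:bounded-counterexample2} to the bounded, uniformly discrete space $M=\{p_n\}_n$ with $d(p_n,p_m)=1+1/\max\{n,m\}$ from \cite[Theorem 4.1]{DMQR23}, and your verification of the hypotheses (including $L(n)=L=R(p_n)=1$) is accurate. The only trivial slip is that the distances lie in $\left]1,3/2\right]$ rather than $\left]1,2\right]$, which does not affect anything.
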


\begin{example}\label{ex:counter_to_bounded-counterexample2}
However, there are metric spaces that are not covered by Proposition \ref{prop:pna_c_0_33} and Theorem \ref{theorem:bounded-counterexample2}. For instance, consider the metric space $M = \{0\} \cup \{ p_n\}_n$ endowed with the metric given by 
$d(p_n,p_m) = 1 + 1/(n+m)$ and $d(p_n,0) = 1/2 + 1/n$ for every $n, m \in \mathbb{N}$. It is easy to see that there is no sequence of points in $M$ satisfying (b) in Proposition \ref{prop:pna_c_0_33}. Moreover, let $\{q_n\}_n \subseteq M$ such that $d(q_n, q_m)$ is decreasing with respect to $n$ and $m \in \N$. Then $q_n \neq 0$ for every $n \in \N$ and $\{q_n\}_n$ is a subsequence of $\{p_n\}_n$, say $q_k = p_{n_k}$. Then for $k \neq \ell \in \N$, 
\[
1 + \frac{1}{n_k} + 1 + \frac{1}{n_\ell} > 1 + 1 + \frac{1}{n_k + n_\ell};
\]
hence the assumption (3) in Theorem \ref{theorem:bounded-counterexample2} cannot be satisfied either. 
\end{example}

As we will see, however, $\pna (M)$ still contains $c_0$ isometrically for the metric space $M$ from Example \ref{ex:counter_to_bounded-counterexample2}. Note that this metric space is not covered by any of the previous positive results from the article.

\begin{theorem}\label{thm:counter_to_bounded-counterexample2}
Let $M$ be an infinite uniformly discrete pointed metric space containing a sequence of distinct points $\{p_n\}_{n=1}^{\infty}$ satisfying these conditions:
\begin{enumerate}
\itemsep0.3em
\item[\textup{(i)}] For all $n\in\bbn$, $d(p_n, 0)=R(p_n)$.
\item[\textup{(ii)}] The sequence $\{d(p_n, 0)\}_{n=1}^{\infty}$ is decreasing with $n$ and converges to some positive number $D>0$.
\item[\textup{(iii)}] $2D\leq d(p_n, p_m)$ for all $n,m\in\bbn$.
\end{enumerate}
Then $\pna(M)$ contains $c_0$ isometrically.
\end{theorem}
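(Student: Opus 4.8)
The plan is to construct an explicit isometric copy of the canonical $c_0$-basis inside $\pna(M)$, using the distinguished point $0$ as a common point of pointwise norm-attainment. As in the proof of Theorem~\ref{theorem:bounded-counterexample2}, I would first split $\bbn$ into infinitely many infinite blocks via prime powers: writing $r_n$ for the $n$-th prime, the index sets $\{r_n^m : m \geq 1\}$ are pairwise disjoint and each of them is infinite. For each $n \in \bbn$ I would then define $f_n \in \lip(M)$ by $f_n(p_{r_n^m}) := D$ for every $m \geq 1$ and $f_n(x) := 0$ at every other point of $M$ (so $f_n(0) = 0$, and $f_n$ also vanishes at any points of $M$ lying outside $\{p_k\}_k$).

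First I would check that $\|f_n\| = 1$ with pointwise norm attained at $0$. Every nonzero slope of $f_n$ has the form $D/d(p_{r_n^m}, q)$ for some $q \neq p_{r_n^m}$ with $f_n(q) = 0$; by hypotheses~(i) and~(ii), $d(p_{r_n^m}, q) \geq R(p_{r_n^m}) = d(p_{r_n^m}, 0) \geq D$, so each such slope is at most $1$. Since $r_n^m \to \infty$ as $m \to \infty$, hypothesis~(ii) gives $d(p_{r_n^m}, 0) \downarrow D$, hence $S(f_n, 0, p_{r_n^m}) = D/d(p_{r_n^m}, 0) \to 1$, so that $\sup_{q \neq 0} |S(f_n, 0, q)| = 1 = \|f_n\|$ and $f_n \in \pna(M)$.

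For the isometry, I would fix $(c_n)_n \in c_0$, set $C := \|(c_n)_n\|_\infty$ and $f := \sum_n c_n f_n$; this is well defined and Lipschitz since the blocks are disjoint, and the decisive feature is that $|f(x)| \leq CD$ for all $x \in M$. The step I expect to be the main obstacle is the upper estimate $\|f\| \leq C$, and it is precisely here that the boundedness $|f| \leq CD$ combines with all three hypotheses. For a pair $(p_k, p_j)$ of distinct points both carrying nonzero values, $|f(p_k) - f(p_j)| \leq 2CD$ while $d(p_k, p_j) \geq 2D$ by~(iii), so the slope is at most $C$; for a pair $(p_k, q)$ with $q \neq 0$ and $f(q) = 0$, hypothesis~(i) gives $d(p_k, q) \geq d(p_k, 0) \geq D$, so the slope is at most $CD/D = C$; and every slope issuing from $0$ is at most $CD/d(p_k, 0) \leq C$ by~(ii). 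All other slopes vanish.

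Finally, the matching lower bound and the pointwise attainment would be obtained at once at the point $0$: choosing $n_0$ with $|c_{n_0}| = C$ (attained because $(c_n)_n \in c_0$) and letting $m \to \infty$ along $k = r_{n_0}^m$ yields $|S(f, 0, p_k)| = CD/d(p_{r_{n_0}^m}, 0) \to C$, so $\sup_{q \neq 0} |S(f, 0, q)| = C = \|f\|$. This shows that $(c_n)_n \mapsto \sum_n c_n f_n$ is an isometry whose image is a copy of $c_0$ contained in $\pna(M)$, which is the assertion.
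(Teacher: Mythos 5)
Your proposal is correct and follows essentially the same route as the paper's proof: the same prime-power block decomposition, the same functions $f_n$ taking the constant value $D$ on the $n$-th block and vanishing elsewhere, and the same case analysis using (iii) for pairs in distinct blocks, (i) for pairs involving a point where $f$ vanishes, and (ii) both for slopes from $0$ and for the limiting lower bound at $0$. No substantive differences to report.
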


\begin{proof}
Let $\{r_n\}_{n=1}^{\infty}$ be the sequence of all prime numbers in order, and for each $n\in\bbn$, define $f_n \in \Lip (M)$ as follows:
\[
f_n( p_{r_n^m} ) = D \, \text{ for every } m \in \mathbb{N}, \, \text{ and } \, f_n (p) = 0 \, \text{ otherwise}.
\]
Let $a=(a_n)_n \in c_0$, $f_a:=\sum_{n=1}^{\infty} a_n f_n$ and $(p,q)\in\widetilde{M}$ be given. 
Observe that if $p, q \notin \{ p_{r_n^m} : n, m \in \mathbb{N}\}$, then $S(f_a, p,q)=0$. If there exists $n \in \mathbb{N}$ such that $p, q \in \{p_{r_n^m}: m \in \mathbb{N}\}$, then $S(f_a, p,q)=0$. Thus, we consider the following cases: 
\begin{enumerate}
\itemsep0.3em
\item If $p=p_{r_n^m}$ for some $n, m \in \mathbb{N}$ but $q \notin \bigcup_{n=1}^{\infty} R_n$, then $|S(f_a, p, q)|\leq \|a\|$. Moreover, notice that $\sup_{m\in\bbn} |S(f_a, p_{r_n^m}, 0)|=|a_n|$; hence 
$$\sup_{n\in\bbn} \sup_{m\in\bbn} |S(f_a, p_{r_n^m}, 0)|=\|a\|.$$
\item Finally, if there are $(n,\ell) \in \widetilde{\mathbb{N}}$ such that $p = p_{r_n^m}$ and $q = p_{r_\ell^k}$ for some $m, k \in \mathbb{N}$, then
$$|S(f_a, p, q)|\leq \frac{|a_n| D + |a_m| D}{d(p_{r_n^m}, p_{r_\ell^k})}\leq \|a\|.$$
\end{enumerate}
Therefore, $f_a$ attains its pointwise norm at the point $0$ with $\|f_a\| = \|a\|$. This concludes the proof.
\end{proof}

The following result shows that there is a large class of metric spaces $M$ in which $c_0$ is isometrically contained in $\pna(M)$, including the metric spaces from Theorem \ref{thm:counterexamples-c0-SNA-2} and \cite[Remark 10]{CJ17}, and the metric space considered in \cite[Theorem 4.4]{DMQR23}: $M = \{p_n\}_{n=1}^\infty$ is a metric space endowed with the metric $d$ defined by
\begin{displaymath}
d(p_n,p_m)=\left\{\begin{array}{@{}cl}
\displaystyle \phantom{.} n+m - \eps_{\max\{n,m\}} & \text{if } n \neq m \\
\displaystyle \phantom{.} n & \text{if } m = 0 \\
\displaystyle \phantom{.} m & \text{if } n = 0 \\
\displaystyle \phantom{.} 0 & \text{otherwise},
\end{array} \right.
\end{displaymath}
where $\{\eps_n\}_{n=1}^\infty\subset\, ]0,\, \frac{1}{2}[$ is a strictly increasing sequence of positive numbers.


\begin{theorem}\label{theorem:unbounded-counterexample}
Let $M$ be an infinite pointed metric space such that there exists a sequence $\{(p_n,\eps_n)\}_{n=1}^\infty \subseteq M \times [0,+\infty[$ satisfying the following assertions:
\begin{enumerate}
\itemsep0.3em
\item[\textup{(i)}] $\dfrac{d(p_n,0) - \eps_n + d(p_m,0) - \eps_m}{d(p_n,p_m)} \leq 1$ for every $n,m \in \N$, and moreover, for any fixed $n \in \N$, the left-hand side converges to $1$ as $m$ tends to $\infty$,
\item[\textup{(ii)}] $0 \leq d(p_n,0)-\eps_n \leq R(p_n)$ for every $n\in\bbn$. 
\end{enumerate}
Then $\pna(M)$ contains $c_0$ isometrically.
\end{theorem}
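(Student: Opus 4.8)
The plan is to reuse the ``prime-power cluster'' construction from Theorems \ref{theorem:bounded-counterexample2} and \ref{thm:counter_to_bounded-counterexample2}, but to arrange the signs so that the pointwise norm is attained only in a limit. Throughout, I would abbreviate $\beta_n := d(p_n,0)-\eps_n$, so that hypothesis (i) reads $\tfrac{\beta_n+\beta_m}{d(p_n,p_m)}\le 1$ for all $n,m$, with $\tfrac{\beta_n+\beta_m}{d(p_n,p_m)}\to 1$ as $m\to\infty$ for each fixed $n$, while hypothesis (ii) reads $0\le\beta_n\le R(p_n)$. Let $\{r_n\}_{n=1}^\infty$ enumerate the primes. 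By unique factorization the points $\{p_{r_n^m}:n,m\in\bbn\}$ are pairwise distinct, and the clusters $R_n:=\{p_{r_n^m}:m\in\bbn\}$ are pairwise disjoint.

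For each $n$ I would define $f_n\in\Lip(M)$ by $f_n(p_{r_n})=\beta_{r_n}$, $f_n(p_{r_n^m})=-\beta_{r_n^m}$ for $m>1$, and $f_n\equiv 0$ off $R_n$, and first check that $\|f_n\|=1$. Indeed, the slope from $p_{r_n}$ to $p_{r_n^m}$ equals $\tfrac{\beta_{r_n}+\beta_{r_n^m}}{d(p_{r_n},p_{r_n^m})}$, which is $\le 1$ and tends to $1$ by (i); any slope to $0$ or to a point outside $R_n$ has the form $\tfrac{\beta_i}{d(p_i,\cdot)}$, bounded by $1$ since $\beta_i\le R(p_i)\le d(p_i,q)$ for $q\ne p_i$ by (ii); and a slope within $R_n$ is $\tfrac{|\beta_{r_n^j}-\beta_{r_n^k}|}{d}\le\tfrac{\max\{\beta_{r_n^j},\beta_{r_n^k}\}}{d}\le 1$, using that $|a-b|\le\max\{a,b\}$ for nonnegative reals together with $\beta_i\le R(p_i)$.

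Given $c=(c_n)\in c_0$, set $f_c:=\sum_n c_n f_n$, which is a well-defined pointwise sum because the clusters are disjoint. The estimate $\|f_c\|\le\|c\|$ I would obtain by the same case analysis on a pair $(p,q)\in\widetilde M$ as before: if $p,q$ lie in a common cluster $R_n$, the slope equals $|c_n|$ times a slope of $f_n$, hence is $\le\|c\|$; if $p\in R_n$ and $q\in R_\ell$ with $n\ne\ell$, the triangle inequality gives $|S(f_c,p,q)|\le\|c\|\,\tfrac{\beta_{i(p)}+\beta_{i(q)}}{d(p,q)}\le\|c\|$ by (i), where $i(\cdot)$ denotes the integer index of a cluster point; and if at most one of $p,q$ lies in $\bigcup_n R_n$, the slope is either $0$ or $\le\|c\|\,\tfrac{\beta_{i(p)}}{d(p_{i(p)},\cdot)}\le\|c\|$ by (ii). To finish, fix $n_0$ with $|c_{n_0}|=\|c\|$ and evaluate, using the limit in (i) along the subsequence of indices $r_{n_0}^m\to\infty$,
\[
\lim_{m\to\infty}|S(f_c,p_{r_{n_0}},p_{r_{n_0}^m})|=|c_{n_0}|\lim_{m\to\infty}\frac{\beta_{r_{n_0}}+\beta_{r_{n_0}^m}}{d(p_{r_{n_0}},p_{r_{n_0}^m})}=|c_{n_0}|=\|c\|.
\]
Since this lower bound for the pointwise norm at $p_{r_{n_0}}$ matches the upper bound $\|f_c\|\le\|c\|$, we conclude $\|f_c\|=\|c\|$ and that $f_c$ attains its pointwise norm at $p_{r_{n_0}}$; hence $c\mapsto f_c$ is a linear isometry of $c_0$ into $\pna(M)$.

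The calculations are routine once the sign pattern is fixed, and the one genuinely delicate point is precisely why the sign $-\beta_{r_n^m}$ is forced: the pointwise norm here must be realized as a limit rather than as a maximum. Pairing the peak value $+\beta_{r_n}$ against the negative values $-\beta_{r_n^m}$ makes the controlling intra-cluster slope equal to $\tfrac{\beta_{r_n}+\beta_{r_n^m}}{d(p_{r_n},p_{r_n^m})}$, which is exactly the quantity hypothesis (i) drives to $1$; any other sign convention would replace this sum by a difference and destroy the limit. The secondary subtlety, the intra-cluster Lipschitz bound, rests on the elementary inequality $|a-b|\le\max\{a,b\}$ for $a,b\ge 0$ combined with $\beta_i\le R(p_i)$ from (ii).
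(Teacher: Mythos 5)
Your proposal is correct and follows essentially the same route as the paper's proof: the identical prime-power cluster construction, the same sign pattern $+\beta_{r_n}$ at the peak and $-\beta_{r_n^m}$ on the tail, the same case analysis for the upper bound via (i) and (ii), and the same limiting slope computation at $p_{r_{n_0}}$. The only cosmetic difference is that you split the intra-cluster estimate into two subcases (using $|a-b|\le\max\{a,b\}$ for the tail-to-tail slopes), whereas the paper bounds all prime-power pairs at once by $\frac{\beta_j+\beta_k}{d(p_j,p_k)}\le 1$ from (i); both are valid.
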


\begin{proof}
Let $\{r_n\}_{n=1}^\infty \subseteq \N$ be a sequence of increasing prime numbers. Define $f_n \in \lip(M)$ for each $n \in \N$ by
\begin{displaymath}
f_n(x)=\left\{\begin{array}{@{}cl}
\displaystyle \phantom{.} \phantom{\int} d(p_{r_n},0) - \eps_{r_n} \phantom{\int} & \text{if } x=p_{r_n} \\
\displaystyle \phantom{.} \phantom{\int} -(d(p_{r_n^m},0) - \eps_{r_n^m}) \phantom{\int} & \text{if } x=p_{r_n^m} \text{ for some } m \geq 2 \\
\displaystyle \phantom{.} \phantom{\int}0\phantom{\int} & \text{otherwise}.
\end{array} \right.
\end{displaymath}

Let $(a_n)_{n=1}^\infty \in c_0$ be given. We will show that $f:=\sum_{n=1}^\infty a_nf_n \in \PNA(M)$ and $\|f\| = \|(a_n)_{n=1}^\infty\|$. Fix any $n_0 \in \N$ such that $\|(a_n)_n\| = |a_{n_0}|$. Let $p_j,p_k \in M$ with $j \neq k$ be given. Then, either we have \textup{(i)} $j = r_{n_1}^{m_1}$ and $k = r_{n_2}^{m_2}$ for some $(n_1,m_1) \neq (n_2,m_2)$, \textup{(ii)} $j = r_{n_1}^{m_1}$ for some $n_1, m_1 \in \N$ and $k \neq r_{n_2}^{m_2}$ for any $n_2, m_2 \in \N$, \textup{(iii)} $j,k \neq r_{n_1}^{m_1}$ for any $n_1, m_1 \in \N$. For the case \textup{(i)}, we have
$$
|S(f, p_{r_{n_1}^{m_1}}, p_{r_{n_2}^{m_2}})|  \leq \frac{|a_{n_1}|(d(p_{r_{n_1}^{m_1}},0) - \eps_{r_{n_1}^{m_1}}) + |a_{n_2}| (d(p_{r_{n_2}^{m_2}},0) - \eps_{r_{n_2}^{m_2}})}{d(p_{r_{n_1}^{m_1}}, p_{r_{n_2}^{m_2}})} \leq |a_{n_0}|,
$$
and the rest cases can be done similarly. Moreover, observe that
$$
\lim_m |S(f, p_{r_{n_0}},p_{r_{n_0}^m})|  = \lim_m |a_{n_0}|\, \frac{d(p_{r_{n_0}},0) - \eps_{r_{n_0}}+d(p_{r_{n_0}^m},0) - \eps_{r_{n_0}^m}}{d(p_{r_{n_0}},p_{r_{n_0}^m})} = |a_{n_0}|,
$$
and this shows that $\{f_n\}_{n=1}^\infty \subseteq \PNA(M)$ is isometric to the basis of $c_0$.
\end{proof}

\begin{remark}
If $M$ and $\{p_n\}_{n=1}^{\infty}$ are as in Theorem \ref{theorem:bounded-counterexample2}, then, if we set $p_1=0$ and for each $n\in\bbn$ we define
$$\varepsilon_n:= d(p_1, p_n) + \frac{\lim_n \lim_n d(p_n, p_m)}{2} - \lim_m d(p_n, p_m),$$
then we get that the sequence $\{(p_n, \eps_n)\}_{n=1}^\infty$ satisfies the conditions in Theorem \ref{theorem:unbounded-counterexample}. However, the basis functions of $c_0$ constructed by Theorem \ref{theorem:bounded-counterexample2} are different from the ones given by Theorem \ref{theorem:unbounded-counterexample}.  
\end{remark} 

As we mentioned above, Theorem \ref{theorem:unbounded-counterexample} can be applied to metric space considered in \cite[Theorem 4.4]{DMQR23}; hence the following corollary is immediate.

\begin{cor}
There exists a proper metric space $M$ for which $\PNA(M)$ contains an isometric copy of $c_0$, while $\sna(M)$ does not.
\end{cor}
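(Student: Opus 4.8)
The plan is simply to exhibit the concrete space that is bracketed by the two results surrounding the corollary, and to check that it is proper. I would take $M=\{p_n\}_{n=1}^\infty$ with base point $0$, equipped with the metric displayed immediately before Theorem~\ref{theorem:unbounded-counterexample}, namely $d(p_n,p_m)=n+m-\eps_{\max\{n,m\}}$ when $n\neq m$ and $d(p_n,0)=n$, where $\{\eps_n\}_{n}\subset\,]0,\tfrac12[$ is strictly increasing. By \cite[Theorem 4.4]{DMQR23} this $M$ already satisfies that $\sna(M)$ contains no isometric copy of $c_0$, so it only remains to confirm that $M$ is proper and that $\pna(M)$ does contain $c_0$ isometrically.

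For properness I would observe that $d(p_n,0)=n\to\infty$ and $d(p_n,p_m)\ge n+m-\tfrac12\to\infty$, so every closed ball meets only finitely many of the $p_n$ and is therefore a finite, hence compact, set; thus $M$ is proper (and, being uniformly discrete with separation constant $1$, complete). To obtain the $c_0$-embedding I would verify the hypotheses of Theorem~\ref{theorem:unbounded-counterexample} for the pair $\{(p_n,\eps_n)\}_{n=1}^\infty$, reusing the very sequence $\{\eps_n\}$ appearing in the metric. Since every $d(p_n,p_m)$ with $m\neq n$ exceeds $d(p_n,0)=n$, one has $R(p_n)=n$, so condition (ii) becomes $0\le n-\eps_n\le n$, which is immediate from $0<\eps_n<\tfrac12\le n$. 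For condition (i) the relevant quotient equals
\[
\frac{(n-\eps_n)+(m-\eps_m)}{\,n+m-\eps_{\max\{n,m\}}\,},
\]
which is at most $1$ because the monotonicity of $\{\eps_n\}$ gives $\eps_n+\eps_m\ge\eps_{\max\{n,m\}}$; and for fixed $n$ the numerator and denominator differ by the constant $-\eps_n$ once $m>n$, so the quotient tends to $1$ as $m\to\infty$. Hence Theorem~\ref{theorem:unbounded-counterexample} applies and $\pna(M)$ contains $c_0$ isometrically.

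There is no genuine obstacle here, as the statement is essentially immediate: the corollary is just the juxtaposition of the two results on either side of it. The only points needing a moment's care are the computation $R(p_n)=n$ (which rests on all non-basepoint distances exceeding $n$) and the inequality $\eps_n+\eps_m\ge\eps_{\max\{n,m\}}$ coming from monotonicity of $\{\eps_n\}$; the limiting clause in condition (i) is precisely what forces the norm attainment to be \emph{pointwise}, at the base point $0$, rather than strong, which is what separates this example from $\sna(M)$.
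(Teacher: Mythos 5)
Your proposal is correct and coincides with the paper's intended argument: the corollary is stated as immediate from applying Theorem~\ref{theorem:unbounded-counterexample} to the metric space of \cite[Theorem 4.4]{DMQR23}, which is exactly the example you take, with the non-embeddability of $c_0$ into $\sna(M)$ imported from that same reference. Your explicit verifications of properness, of $R(p_n)=n$, and of conditions (i) and (ii) simply fill in details the paper leaves implicit, and they check out.
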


\begin{example}\label{example:motivation-main-result}
Consider the metric space $M = \{p_n\}_{n=1}^\infty$ whose metric $d$ is given by $d(p_n,p_m) = 2 - \frac{1}{3^n}-\frac{2}{3^m}$ for every $n<m$. Then this metric space $M$ does not satisfy any of our previous results. Indeed, it is clear that it does not satisfy Propositions \ref{prop:c0_characterizationQ} and \ref{prop:pna_c_0_33} and Theorems \ref{theorem:bounded-counterexample2} and \ref{thm:counter_to_bounded-counterexample2}. Moreover, if it satisfied Theorem \ref{theorem:unbounded-counterexample}, then in order to ensure convergence to $1$, up to subsequence, for each $n\in\bbn$ we get $\varepsilon_n= \frac{2}{3}-\frac{1}{3^n}$ and also $\lim_n \varepsilon_n = \frac{2}{3}$, but then the fractions would be greater than $1$, which is a contradiction. Also, it cannot satisfy Theorem \ref{theorem:c0-condition-1-improved}, as if it did, there would exist strictly increasing sequences of naturals $\{\sigma_n\}_n$, $\{\tau_n\}_n$ such that for all $n\in \bbn$, $\sigma_n<\tau_n<\sigma_{n+1}$, and for all $n<m$, it must hold that
$$2-\frac{1}{3^{\sigma_n}} - \frac{2}{3^{\tau_n}} + 2-\frac{1}{3^{\sigma_m}} - \frac{2}{3^{\tau_m}} \leq 2\left(2-\frac{1}{3^{\sigma_n}} - \frac{2}{3^{\sigma_m}}\right).$$
But if we fix $n\in\bbn$ and tend $m\to\infty$, that inequality would imply that
$$\frac{1}{3^{\sigma_n}}\leq \frac{2}{3^{\tau_n}},$$
which is a contradiction. Finally, arguing similarly, the condition (4) from Theorem \ref{thm:1+1/n+1/m} also leads to a contradiction. However, we will see that $c_0$ is isometrically contained in $\sna(M)$, as shown in the Subcase I-(ii) of the proof of the main theorem.
\end{example}

Finally, we finish this section with our main result, which gives a partial positive answer to Question \ref{Q:Main}. 
This highlights once again how pointwise norm-attaining Lipschitz functions differ from strongly norm-attaining functions (see Theorem \ref{thm:counterexamples-c0-SNA-2}).

\begin{theorem}\label{Main-Theorem-c0-PNA}
Let $M$ be an infinite pointed metric space. Then there exists a metric subspace $M_0 \subseteq M$ such that $c_0$ is isometrically contained in $\pna(M_0)$. 
\end{theorem}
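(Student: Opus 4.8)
The plan is to reduce to the uniformly discrete case, extract a regular subsequence by diagonalization and Ramsey's theorem, and then route it into whichever of the earlier embedding theorems applies, reserving a direct construction for a single boundary regime. First, if $M$ is not uniformly discrete then $\sna(M)\subseteq\pna(M)$ already contains $c_0$ isometrically by \ref{SS}, so one takes $M_0=M$. Assume then that $M$ is uniformly discrete, fix a sequence of distinct points, and by a diagonal argument pass to a subsequence $\{p_n\}_{n=1}^\infty$ for which $L(n):=\lim_m d(p_n,p_m)$ exists in $(0,+\infty]$ for each $n$. The candidate will always be $M_0=\{p_n\}_{n=1}^\infty$ (with $0:=p_1$), so that each $R(p_n)$ is computed internally and is only made larger by thinning.

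The unifying observation is that Theorem \ref{theorem:unbounded-counterexample} is the engine: its hypotheses amount to choosing radii $b_n\in[0,R(p_n)]$ with $b_n+b_m\le d(p_n,p_m)$ for all $n,m$ and $\tfrac{b_n+b_m}{d(p_n,p_m)}\to 1$ as $m\to\infty$ for each fixed $n$ — that is, a packing of balls $B(p_n,b_n)$ that is asymptotically tight along the sequence. When $L:=\lim_n L(n)$ is finite, the natural choice $b_n:=L(n)-L/2$ makes the ratio tend to $1$ automatically (since $d(p_n,p_m)\to L(n)$ and $L(m)\to L$), and reduces the two required inequalities exactly to conditions (ii) and (iii) of Theorem \ref{theorem:bounded-counterexample2}, namely $R(p_n)\ge L(n)-L/2$ and $L(n)+L(m)\le L+d(p_n,p_m)$; this is the content of the Remark following Theorem \ref{theorem:unbounded-counterexample}. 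When $L=+\infty$ the same theorem still applies, now with the radii tied to the base point as in the space of \cite[Theorem 4.4]{DMQR23}. I would therefore apply Ramsey's theorem on $\bbn^{[2]}$ to arrange that the packing inequality holds for every pair of an infinite subset or fails for every pair, and a second time (in the former case) to decide $R(p_n)\ge b_n$ along an infinite subset.

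If the packing survives, Theorem \ref{theorem:unbounded-counterexample} produces the isometric copy of $c_0$ in $\pna(M_0)$ directly; in the degenerate situation where the radii collapse, the inequality $d(p_n,p_m)\ge R(p_n)+R(p_m)$ holds along a thinned subsequence and Proposition \ref{prop:pna_c_0_33} applies. The interesting case is the failure of the packing. If $R(p_n)<b_n$ persists, then each $p_n$ carries a genuine near neighbour $q_n$ with $d(p_n,q_n)=R(p_n)$ strictly below the asymptotic cluster separation, and one switches from single bumps to the difference functions on the pairs $(p_n,q_n)$ as in Theorem \ref{theorem:c0-condition-1-improved}, checking the separation inequality $d(p_n,q_n)+d(p_m,q_m)\le 2\min\{d(r,s):r\in\{p_n,q_n\},\ s\in\{p_m,q_m\}\}$. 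If instead the packing inequality $b_n+b_m\le d(p_n,p_m)$ fails for all pairs — the situation of Example \ref{example:motivation-main-result} — the distances are squeezed strictly below $L(n)$, and one must hand-build an isometric $c_0$ inside $\sna(M_0)$ from slope-attaining pairs adapted to the near-equal distances; this is the Subcase I-(ii) anticipated in Example \ref{example:motivation-main-result}.

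I expect the decisive difficulty to be exactly these failure subcases, and the last one most of all: because the distances cluster asymptotically, every slope estimate is tight to first order, so the basis functions must be calibrated with the right subtracted constants (as in the definitions in Theorems \ref{theorem:bounded-counterexample2} and \ref{theorem:unbounded-counterexample}) and each configuration of a pair $(p,q)$ relative to the support points must be bounded separately. By contrast, the diagonal extraction and the Ramsey reductions are routine; the genuine content is, in each failure regime, to exhibit a family of well-separated molecules realizing an isometric $c_0$.
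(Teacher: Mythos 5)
Your skeleton matches the paper's: reduce to the uniformly discrete case via \ref{SS}, extract a subsequence where the iterated limits of distances exist, run Ramsey's theorem on the packing dichotomy $\psi(n)+\psi(m)\le\phi(n,m)$ versus its negation (with $\phi(n,m)=d(p_n,p_m)-L$ and $\psi(n)=\lim_m\phi(n,m)$), and feed the good branch into Theorem \ref{theorem:unbounded-counterexample} with $b_n=L(n)-L/2$; your observation that conditions (ii)--(iii) of Theorem \ref{theorem:bounded-counterexample2} are exactly this specialization is indeed how the paper handles its Subcase I-(i). (Your extra Ramsey pass to decide $R(p_n)\ge b_n$ is unnecessary: once $b_m\ge 0$ for all $m$ --- true after discarding finitely many indices since $L(m)\to L>0$ --- the packing inequality gives $b_n\le d(p_n,p_m)-b_m\le d(p_n,p_m)$ for every $m$, hence $b_n\le R(p_n)$; so the branch ``$R(p_n)<b_n$ persists'' is vacuous and the appeal to Theorem \ref{theorem:c0-condition-1-improved} there is never needed.)

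The genuine gap is that the two remaining regimes are announced rather than proved. In the branch where the packing inequality fails for every pair ($\phi(n,m)<\psi(n)+\psi(m)$ throughout), you say one must ``hand-build an isometric $c_0$ inside $\sna(M_0)$'' --- this is precisely the paper's Subcase I-(ii) and it is the hardest step: the paper inductively selects a sparse sequence of pairs $(\sigma_n,\tau_n)$ with tolerances $\eps_n=\bigl(-\phi(\sigma_n,\tau_n)+\psi(\sigma_n)+\psi(\tau_n)\bigr)/6$, defines two-point-supported functions taking the asymmetric values $\tfrac12\bigl(L+\phi(\sigma_n,\tau_n)\pm(\psi(\sigma_n)-\psi(\tau_n))\bigr)$ at $p_{\sigma_n}$ and $p_{\tau_n}$, and verifies four separate cross-slope estimates. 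These are functions in the style of Theorem \ref{thm:1+1/n+1/m}, not of Theorems \ref{theorem:bounded-counterexample2} or \ref{theorem:unbounded-counterexample} as you suggest, and Example \ref{example:motivation-main-result} shows that Theorem \ref{thm:1+1/n+1/m} cannot simply be cited (its condition (4) fails), so the sparse selection of pairs cannot be skipped. Likewise, in the unbounded case you assert that Theorem \ref{theorem:unbounded-counterexample} ``still applies with the radii tied to the base point,'' but you do not construct the sequence: the paper must choose points with $d(p_k,p_{n+1})>n\max\{d(p_a,p_b)+c_a\}$ and define the constants $c_n$ recursively before the hypotheses $c_n+c_m\le d(p_n,p_m)$ and $(c_n+c_m)/d(p_n,p_m)\to 1$ can be verified. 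Identifying where the difficulty lies is not the same as resolving it; as written, your argument establishes the theorem only in the non-uniformly-discrete case and in Subcase I-(i).
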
 

\begin{proof}
If $M$ is not uniformly discrete, $\pna(M)$ contains an isometric copy of $c_0$ by \ref{SS}. Suppose that $M$ is uniformly discrete, and let us divide the proof into two cases: 

\textit{Case I}: $M$ contains a bounded sequence $\{p_n\}_n$. Then by compactness, passing to a subsequence if necessary, we may assume that 
\[
\lim_{n,m} d(p_n, p_m) = \lim_{n}\lim_{m} d(p_n, p_m) = L.
\]
For simplicity, put 
\[
\phi(n,m) := d(p_n,p_m) - L, \, \text{ and } \, \psi(n) := \lim_{m} \phi(n,m). 
\] 
Passing to a further subsequence, we assume that $|\phi(n,m)| < L/5$ for every $n \neq m\in \mathbb{N}$. 
Consider 
\begin{align*}
&A = \{ (n,m) \in \mathbb{N}^{[2]} : \phi(n,m) \geq \psi(n) + \psi(m) \}, \\
&B = \{ (n,m) \in \mathbb{N}^{[2]} : \phi(n,m) < \psi(n) + \psi(m) \}. 
\end{align*} 
Using Ramsey's theorem, there exists $C \in \{A,B\}$ and an infinite subset $S \subseteq \mathbb{N}$ such that $S^{[2]} \subseteq C$. 
We distinguish two subcases.  

\textit{Subcase I-\textup{(i)}}: $C=A$. That is, $\phi(n,m) \geq \psi(n) + \psi(m)$ for every $n \neq m \in S$. Define $M_0 := \{ p_n \}_{n \in S}$, put $p_1 = 0$ and set $\eps_n = L/2 + \phi(1,n) - \psi(n)$. Then 
\[
\frac{ d(0,p_n)-\eps_n + d(0,p_m) -\eps_m }{d(p_n,p_m)} = \frac{L +\psi(n) + \psi(m)}{L+\phi(n,m)} \leq 1
\]
and the left-hand side tends to $1$ when $m$ goes to infinity. Moreover, since $d(p_n,p_m) - L \geq \psi(n) + \psi(m)$, we have 
\[
R(p_n) \geq L + \psi(n) - \frac{L}{5} = \frac{4L}{5} + \psi(n),
\] 
so $0 \leq d(0, p_n) - \eps_n = L/2 + \psi(n) < R(p_n)$. Now, applying Theorem \ref{theorem:unbounded-counterexample} to $M_0 = \{ p_n \}_{n \in S}$, we conclude that $c_0$ is isometrically contained in $\pna (M_0)$. 

\textit{Subcase I-\textup{(ii)}}: $C=B$. That is, $\phi(n,m) < \psi(n)+\psi(m)$ for every $n \neq m \in S$. Put $M_0 := \{ p_n \}_{n \in S}$. For notational simplicity, we can assume up to relabeling that $S=\bbn$. 
Let us construct sequences $(\sigma_n)_n$ and $(\tau_n)_n$ of natural numbers inductively: let $\sigma_1 = 1$ and $\tau_1 = 2$. Next, if $\sigma_n$ and $\tau_n$ are given for some $n \in \mathbb{N}$, define 
\[
\eps_n = \frac{ - \phi(\sigma_n, \tau_n) + \psi(\sigma_n) + \psi(\tau_n) }{6},
\]
and find $\sigma_{n+1}$ and $\tau_{n+1} : = \sigma_{n+1} +1$ so that the following inequalities hold: if $i \geq \sigma_{n+1}$ and $j \geq \tau_{n+1} = \sigma_{n+1} + 1$, then 
\begin{align*}
|\phi( i, j )| < \eps_n, \, |\psi(i)| < \eps_n, \, |\phi(\sigma_n, i) - \psi(\sigma_n)| < \eps_n, \, \text{ and }\, |\phi(\tau_n, i) - \psi(\tau_n)| < \eps_n. 
\end{align*} 
Consider for each $n \in \N$
$$
f_n (p) 
:=\begin{cases}
\frac{1}{2} (L + \phi(\sigma_n, \tau_n) +\psi(\sigma_n) - \psi(\tau_n)), \quad &\text{if } p= p_{\sigma_n}\\
-\frac{1}{2} (L + \phi(\sigma_n, \tau_n) - \psi(\sigma_n) + \psi(\tau_n)), \quad &\text{if } p= p_{\tau_n}\\
0, &\text{else.}
\end{cases}
$$

Let $(a_n)_{n} \in c_0$ with $|a_{k_0}| = \|a\|$ for some $k_0 \in \mathbb{N}$, and let $h = \sum_{n=1}^\infty a_n f_n$. Let $u \neq v \in M_0$ be given. We will compute the possible values of the slope $|S(h, u, v)|$:
\begin{itemize}
\itemsep0.3em
\item If $u,v\notin \bigcup_{n\in\bbn} \{p_{\sigma_n}, p_{\tau_n}\}$, then $S(h, u, v)=0$.
\item If $u\in \{p_{\sigma_n}, p_{\tau_n}\}$ for some $n\in\bbn$ and $v\notin \bigcup_{n\in\bbn} \{p_{\sigma_n}, p_{\tau_n}\}$, then 
$$|S(h, u, v)|\leq |a_n| \frac{|f_n(u)|}{R(u)} \leq |a_n| \frac{|f_n(u)|}{4L/5}\leq\|a\|.$$
\item If there is some $n\in\bbn$ such that $u=p_{\sigma_n}$ and $v=p_{\tau_n}$, then 
$$|S(h, u, v)|=|a_n|\leq \|a\|.$$
\item Finally, if there exist $n<m\in \bbn$ such that $u\in\{p_{\sigma_n}, p_{\tau_n}\}$ and $v\in\{p_{\sigma_m}, p_{\tau_m}\}$, we have 4 possibilities:
\end{itemize}
\begin{enumerate}
\item If $u=p_{\sigma_n}$ and $v=p_{\sigma_m}$, then
$$|S(h, u, v)|\leq \|a\|\frac{2L+\phi(\sigma_n, \tau_n) + \phi(\sigma_m, \tau_m) + \psi(\sigma_n) + \psi(\sigma_m) - \psi(\tau_n) - \psi(\tau_m)}{2L+2\phi(\sigma_n, \sigma_m)}.$$
\item If $u=p_{\sigma_n}$ and $v=p_{\tau_m}$, then
$$|S(h, u, v)|\leq \|a\|\frac{2L+\phi(\sigma_n, \tau_n) + \phi(\sigma_m, \tau_m) + \psi(\sigma_n) - \psi(\sigma_m) - \psi(\tau_n) + \psi(\tau_m)}{2L+2\phi(\sigma_n, \tau_m)}.$$
\item If $u=p_{\tau_n}$ and $v=p_{\sigma_m}$, then
$$|S(h, u, v)|\leq \|a\|\frac{2L+\phi(\sigma_n, \tau_n) + \phi(\sigma_m, \tau_m) - \psi(\sigma_n) + \psi(\sigma_m) + \psi(\tau_n) - \psi(\tau_m)}{2L+2\phi(\tau_n, \sigma_m)}.$$
\item Finally, if $u=p_{\tau_n}$ and $v=p_{\tau_m}$, then
$$|S(h, u, v)|\leq \|a\|\frac{2L+\phi(\sigma_n, \tau_n) + \phi(\sigma_m, \tau_m) - \psi(\sigma_n) - \psi(\sigma_m) + \psi(\tau_n) + \psi(\tau_m)}{2L+2\phi(\tau_n, \tau_m)}.$$
\end{enumerate}
We claim that each fraction in the right-hand side in (1)--(4) is bounded above by $\|a\|$. Since the proofs are similar, we only give the proof for (1). Indeed, observe that 
\begin{align*}
-2\phi(\sigma_n&, \sigma_m) + \phi(\sigma_n, \tau_n) + \phi(\sigma_m, \tau_m) + 2\psi(\sigma_n) + \psi(\sigma_m) - \psi(\tau_m)\\
&= \phi(\sigma_n, \tau_n) + \phi(\sigma_m, \tau_m) + \psi(\sigma_m) + (-\psi(\tau_m)) + 2(\psi(\sigma_n) - \phi(\sigma_n, \sigma_m))\\
&\leq \phi(\sigma_n, \tau_n) +5\varepsilon_n < \psi(\sigma_n) + \psi(\tau_n).
\end{align*}
This implies that $|S(h,u,v)|\leq \|a\|$ in the case of (1). Finally, note that $|S(h, p_{\sigma_{k_0}}, q_{\sigma_{k_0}})|=\|a\|$, and so $h$ is strongly norm-attaining and has norm $\|a\|$.

\textit{Case II}: $M$ does not contain a bounded sequence. Motivated by \cite[Theorem 5]{CJ17}, we will inductively construct an appropriate sequence of points $\{p_n\}_{n=1}^{\infty}\subset M$ and a sequence of positive numbers $\{c_n\}_{n=1}^{\infty}\subset (0,\infty)$ as follows. Fix some $p_1\in M$ and some $c_1>0$. Now, if for some $n\in\bbn$, the points $\{p_k\}_{k=1}^n\subset M$ and the numbers $\{c_k\}_{k=1}^{n}\subset (0,\infty)$ are given, find $p_{n+1}\in M$ satisfying that for all $1\leq k\leq n$,
$$d(p_k, p_{n+1})>n \max\{d(p_a, p_b) + c_a:\, 1\leq a,b\leq n\},$$
and define
$$c_{n+1}:= \max\{d(p_a, p_b):\, 1\leq a,b\leq n+1\} - \max\{d(p_a, p_b) + c_a:\, 1\leq a,b\leq n\}.$$
From now on, we will restrict ourselves to the metric subspace $M_0=\{p_n\}_{n=1}^{\infty}$. Note that, for all $n\in\bbn$, there exist $1\leq k_n, j_n<n$ such that the following holds:
$$c_n>0,\quad R(p_n)=d(p_{k_n}, p_n),\quad \text{and}\quad \max\{d(p_a, p_b):\, 1\leq a,b\leq n\} = d(p_{j_n}, p_n).$$

Given $n<m\in \bbn$, 
\begin{align*}
c_n+c_m &= c_n  + \max\{d(p_a, p_b):\, 1\leq a,b\leq m\} - \max\{d(p_a, p_b) + c_a:\, 1\leq a,b\leq m-1\} \\
&\leq c_n + d(p_{j_m}, p_m) - d(p_{j_m}, p_n) - c_n\leq d(p_n, p_m).
\end{align*}

As a consequence, note also that for all $n\in\bbn$, 
$$c_n\leq c_n + c_{k_n}\leq d(p_n, p_{k_n}) = R(p_n).$$

Moreover, note that for $n<m\in \bbn$, 
\begin{align*}
1&\geq \frac{c_n+c_m}{d(p_n, p_m)} \\
&= \frac{c_n + \max\{d(p_a, p_b):\, 1\leq a,b\leq m\} - \max\{d(p_a, p_b) + c_a:\, 1\leq a,b\leq m-1\}}{d(p_n, p_m)}\\
&= \frac{d(p_{j_m}, p_m)}{d(p_n, p_m)} - \frac{\max\{d(p_a, p_b) + c_a:\, 1\leq a,b\leq m-1\} - c_n}{d(p_n, p_m)}\\
&\geq 1 - \frac{\max\{d(p_a, p_b) + c_a:\, 1\leq a,b\leq m-1\}}{d(p_n, p_m)} \geq 1-\frac{1}{m-1},
\end{align*}
which converges to $1$ when $m\to\infty$.

Denote now $\{r_n\}_{n=1}^{\infty}$ to the increasing sequence of all prime numbers, and for each $n\in\bbn$, define the Lipschitz function $f_n: M_0 \rightarrow \bbr$ by
$$f_n(x):=\begin{cases}
c_{r_n},\quad &\text{if }x=p_{r_n},\\
-c_{r_n^m},\quad &\text{if }x=p_{r_n^m}, \text{ for some }m>1,\\
0,\quad &\text{else.}
\end{cases}$$

Let $(a_n)_n \in c_0$ be such that $\|a\|=|a_{k_0}|$ for some $k_0\in\bbn$, and let $f:=\sum_{n=1}^{\infty} a_n f_n$. We will compute the slopes $|S(f, x, y)|$ for $(x, y)\in \widetilde{M_0}$ studying all the possible cases:
\begin{enumerate}
\item If $x,y\notin \{p_{r_j^k}:\, j,k\in\bbn\}$, then $|S(f, x, y)|=0$.
\item If $x=p_{r_n^m}$ for some $n,m\in \bbn$ but $y\notin \{p_{r_j^k}:\, j,k\in\bbn\}$, then
$$|S(f, x, y)|\leq |a_n| \frac{c_{r_n^m}}{R(p_{r_n^m})}\leq \|a\|.$$
\item If there are $n,m,k\in\bbn$ with $m\neq k$ such that $x=p_{r_n^m}$ and $y=p_{r_n^k}$, then
$$|S(f, x, y)|\leq |a_n|\frac{c_{r_n^m} + c_{r_n^k}}{d(p_{r_n^m}, p_{r_n^k})}\leq \|a\|.$$
Moreover, note that 
$$\lim_{m\to\infty} |S(f, p_{r_{k_0}}, p_{r_{k_0}^m})| = \lim_{m\to\infty} |a_{k_0}|\frac{c_{r_{k_0}} + c_{r_{k_0}^m}}{d(p_{r_{k_0}}, p_{r_{k_0}^m})}=\|a\|.$$
\item Finally, if there exist $n,m,j,k\in\bbn$ with $n\neq j$ such that $x=p_{r_n^m}$ and $y=p_{r_j^k}$, then
$$|S(f, x, y)|\leq \|a\| \frac{c_{r_n^m} + c_{r_j^k}}{d(p_{r_n^m}, p_{r_j^k})}\leq \|a\|.$$
\end{enumerate}
Therefore, $f$ has norm $\|a\|$, and it attains its pointwise norm at the point $p_{r_{k_0}}$.  
\end{proof}


\section{Embedding of \texorpdfstring{$\ell_1$}{l1} into norm-attaining Lipschitz functions}\label{Section-PNA-Ell-1}

In this section, we will provide some further spaceability results on the set of strongly norm-attaining and pointwise norm-attaining Lipschitz functions. These results will, once more, show how different these two sets are. First, we will provide some sufficient conditions and necessary conditions in order to embed $\ell_1$ isometrically into $\sna(M)$. Second, we will show that, in contrast, none of those necessary conditions for $\ell_1$-embedding into the set of strongly norm-attaining Lipschitz functions $\sna(M)$ are not necessary anymore for the set of pointwise norm-attaining Lipschitz functions $\pna(M)$.

For simplicity, given a point $p$ in a metric space $M$ and $r>0$, we denote by $B(p,r)$ the closed ball centered at $p$ with radius $r$. 



\begin{theorem}\label{theorem:ell1-tents-1}
Let $\Gamma$ be an index set of cardinality $2^{\aleph_0}$, and let $M$ be a pointed metric space containing a set $\{p_\gamma, q_\gamma \}_{\gamma\in \Gamma}$ such that $d(p_\alpha,p_\beta)\geq d(p_\alpha, q_\alpha)+d(p_\beta, q_\beta)$ for all $\alpha\neq\beta\in\Gamma$. Then $\sna(M)$ contains $\ell_1$ isometrically.
\end{theorem}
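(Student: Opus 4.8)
The plan is to build the copy of $\ell_1$ out of disjointly supported ``peak'' (tent) functions, one for each index $\gamma\in\Gamma$, and to exploit the hypothesis $|\Gamma|=2^{\aleph_0}$ to index these peaks by the set $\{-1,1\}^{\mathbb{N}}$ of \emph{all} sign sequences. Concretely, I would first fix a bijection between $\Gamma$ and $\{-1,1\}^{\mathbb N}$, so that each index is a sequence $s=(s_n)_n$ with associated pair $(p_s,q_s)$, and set $r_s:=d(p_s,q_s)>0$. For each $s$ define the peak $\phi_s(x):=\max\{0,\,r_s-d(x,p_s)\}$, which is $1$-Lipschitz, satisfies $\phi_s(p_s)=r_s$ and $\phi_s(q_s)=0$, and is strictly positive exactly on the open ball $\{x:d(x,p_s)<r_s\}$.

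The first key step is to check, using the hypothesis $d(p_\alpha,p_\beta)\ge r_\alpha+r_\beta$ together with the triangle inequality, that these open balls are pairwise disjoint; hence at every point of $M$ at most one $\phi_s$ is nonzero. Consequently, writing $\sigma(s,b):=\sum_n s_n b_n$, for each $n\in\mathbb N$ the pointwise sum $g_n:=\sum_{s} s_n\,\phi_s$ is well defined (each point sees a single term), and more generally $f_b:=\sum_{s}\sigma(s,b)\,\phi_s$ is well defined for every $b=(b_n)\in\ell_1$. Linearity of $b\mapsto f_b$ is immediate, and $f_{e_n}=g_n$.

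The second step is to show $\|f_b\|=\|b\|_1$ with strong attainment. For the upper bound I would verify $|S(f_b,x,y)|\le\|b\|_1$ in all cases: when $x,y$ lie in the same ball it reduces to $|\sigma(s,b)|\le\|b\|_1$; when they lie in different balls the separation estimate $d(x,y)\ge \phi_s(x)+\phi_t(y)$ (again from $d(p_s,p_t)\ge r_s+r_t$) gives $|f_b(x)-f_b(y)|\le\|b\|_1(\phi_s(x)+\phi_t(y))\le\|b\|_1\,d(x,y)$; the remaining case, with one point outside all balls, is handled in the same way. For the lower bound I would use the cardinality hypothesis: choose $s^\ast\in\{-1,1\}^{\mathbb N}=\Gamma$ with $s^\ast_n=\sign(b_n)$ on the support of $b$, so that $\sigma(s^\ast,b)=\|b\|_1$ and, since $f_b(q_{s^\ast})=0$ while $f_b(p_{s^\ast})=\sigma(s^\ast,b)\,r_{s^\ast}$, one gets $S(f_b,p_{s^\ast},q_{s^\ast})=\sigma(s^\ast,b)=\|b\|_1$. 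Thus $\|f_b\|=\|b\|_1$ and $f_b$ strongly attains its norm at $(p_{s^\ast},q_{s^\ast})\in\widetilde M$, i.e. $f_b\in\sna(M)$.

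Putting this together, $b\mapsto f_b$ is a linear isometry of $\ell_1$ into $\lip(M)$ with range inside $\sna(M)$ (the range is the closed span of $\{g_n\}$, since $\|f_b-\sum_{n\le N}b_ng_n\|=\sum_{n>N}|b_n|\to0$). A minor point is that each $g_n$, hence $f_b$, must vanish at the base point: as at most one ball contains $0$, one simply replaces $f_b$ by $f_b-f_b(0)$, which changes neither the slopes nor the norm-attainment, or invokes the base-point irrelevance noted in the introduction. The main conceptual obstacle is the realization that disjointly supported peaks by themselves only yield a copy of $c_0(\Gamma)$; obtaining $\ell_1$ requires precomposing with the classical isometric embedding $\ell_1\hookrightarrow\ell_\infty(\{-1,1\}^{\mathbb N})$, $b\mapsto(\sigma(s,b))_s$, which is exactly what forces the use of a continuum-sized index set. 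The only genuinely technical part, the cross-support slope bound, is routine given the separation hypothesis.
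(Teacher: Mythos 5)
Your proposal is correct and follows essentially the same route as the paper: disjointly supported tent functions $\max\{0,\,d(p_\gamma,q_\gamma)-d(p_\gamma,\cdot)\}$ indexed by all sign sequences (using $|\Gamma|=2^{\aleph_0}$), combined into basis functions $e_n=\sum_\gamma s^{(\gamma)}_n f_\gamma$, with strong attainment of $\sum_n a_n e_n$ at the pair $(p_{\gamma_0},q_{\gamma_0})$ whose sign sequence matches $\sign(a_n)$. Your write-up simply makes explicit the disjointness of the supports and the cross-support slope bound, which the paper leaves as routine verifications.
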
 

\begin{proof}
Given $\Gamma$, let $S=\{s^{(\gamma)}\}_{\gamma\in\Gamma}$ be the set of all sequences of signs $s^{(\gamma)}:=(s^{(\gamma)}_1, s^{(\gamma)}_2,\ldots)$ such that $s^{(\gamma)}_j\in\{-1,1\}$ for all $j\in\bbn$. 

For each $\gamma\in\Gamma$, let $f_\gamma:M\rightarrow \mathbb{R}$ be the Lipschitz function defined as
$$
f_\gamma(p):= \max\{0, d(p_\gamma,q_\gamma) - d(p_\gamma,p)\}.
$$
For each $n\in\mathbb{N}$, define the function $e_n:M \rightarrow \mathbb{R}$ as
$$
e_n(p):=\begin{cases}
s^{(\gamma)}_n f_\gamma(p),\quad &\text{if $p\in B(p_\gamma, d(p_\gamma, q_\gamma))$ for some $\gamma\in\Gamma$}\\
0,\quad &\text{else.}
\end{cases}
$$
It is not difficult to check $\|e_n\| = 1$. 
Let $a=(a_n)_n \in \ell_1$ and $f:=\sum_{n=1}^{\infty}a_n e_n$. 
Then $\|f\|\leq \|a\|$. We will show that $f\in\sna(M)$ with $\|f\|=\|a\|$. Indeed, let $(b_n)_n$ be the sequence defined as follows: for each $n\in\bbn$, if $a_n=0$, then $b_n=1$, and else, $b_n=\sign(a_n)$.
and let $\gamma_0\in\Gamma$ be such that $s^{(\gamma_0)}=b$. Then, $f(q_{\gamma_0})=0$ and $f(p_{\gamma_0})=\|a\|  d(p_{\gamma_0}, q_{\gamma_0})$.
\end{proof}

It is shown in \cite[Proposition 5.1, Theorem 5.2]{DMQR23} that if a metric space $M$ satisfies that $\dens(M)=\Gamma$ for some infinite cardinal $\Gamma$, then there exists a set $\{p_\gamma, q_\gamma\}_{\gamma \in \Gamma}$ such that $d(p_\alpha,p_\beta) \geq d(p_\alpha,q_\alpha)+d(p_\beta,q_\beta)$ for all $\alpha\neq\beta \in \Gamma$. By Theorem \ref{theorem:ell1-tents-1}, we obtain the following immediate consequence. 

\begin{corollary}\label{cor:density_character}
If $X$ is a normed space of density character $\dens(X)\geq 2^{\aleph_0}$, then $\sna(X)$ contains $\ell_1$ isometrically.
\end{corollary}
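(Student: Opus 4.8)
The plan is to view $X$ as a pointed metric space, taking the origin $0$ as the distinguished point and the norm-induced metric $d(x,y) = \|x-y\|$, and then to combine the cited separation result of \cite[Proposition 5.1, Theorem 5.2]{DMQR23} with Theorem \ref{theorem:ell1-tents-1}. First I would set $\Gamma_0 := \dens(X)$, which by hypothesis is an infinite cardinal with $\Gamma_0 \geq 2^{\aleph_0}$. Since $\dens(X) = \Gamma_0$, the cited result produces a family of pairs $\{p_\gamma, q_\gamma\}_{\gamma \in \Gamma_0} \subseteq \widetilde{X}$ satisfying the tent-separation inequality
$$d(p_\alpha, p_\beta) \geq d(p_\alpha, q_\alpha) + d(p_\beta, q_\beta) \quad \text{for all } \alpha \neq \beta \in \Gamma_0.$$

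The second step is a cardinality reduction. Because $\Gamma_0 \geq 2^{\aleph_0}$, I would fix a subset $\Gamma \subseteq \Gamma_0$ of cardinality exactly $2^{\aleph_0}$ and pass to the subfamily $\{p_\gamma, q_\gamma\}_{\gamma \in \Gamma}$. The separation inequality is a condition on pairs of distinct indices, so it is inherited by any subfamily; hence $\{p_\gamma, q_\gamma\}_{\gamma \in \Gamma}$ still satisfies the hypothesis of Theorem \ref{theorem:ell1-tents-1}, now with an index set of the required cardinality $2^{\aleph_0}$. Applying that theorem directly yields an isometric copy of $\ell_1$ inside $\sna(X)$, which is exactly the claim.

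The argument is essentially immediate once these two ingredients are combined, so there is no genuine obstacle; the only point requiring a little care is the cardinality bookkeeping. Theorem \ref{theorem:ell1-tents-1} is stated for an index set of cardinality \emph{exactly} $2^{\aleph_0}$ (its proof matches the pairs bijectively with the $2^{\aleph_0}$ sign sequences in $\{-1,1\}^{\mathbb{N}}$), whereas the density hypothesis only guarantees \emph{at least} that many tent pairs. Selecting a subfamily of the exact cardinality resolves this mismatch, and no further estimates are needed.
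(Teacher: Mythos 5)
Your proposal is correct and follows essentially the same route as the paper: invoke \cite[Proposition 5.1, Theorem 5.2]{DMQR23} to produce the separated family of tent pairs indexed by $\dens(X)$ and then apply Theorem \ref{theorem:ell1-tents-1}. The paper treats the corollary as immediate and does not even spell out the restriction to a subfamily of cardinality exactly $2^{\aleph_0}$, so your explicit cardinality bookkeeping is a harmless (and slightly more careful) elaboration of the same argument.
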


\begin{remark}
Combining Corollary \ref{cor:density_character} with the fact \ref{KRS}, we observe that under Continuum Hypothesis, a Banach space $X$ is not separable if and only if $\sna(X)$ contains $\ell_1$.
\end{remark}

However, we will find that the condition in  Theorem \ref{theorem:ell1-tents-1} is sufficient, but not necessary. As a matter of fact, there are metric spaces $M$ not satisfying those conditions but for which $\ell_1$ is also isometrically contained in $\sna(M)$, as the following result shows.

\begin{proposition}\label{theorem:ell1-discrete-1}
Let $\Gamma$ be an index set of cardinality $2^{\aleph_0}$, and let $M=\bigcup_{\gamma\in\Gamma}\{p_\gamma, q_\gamma\}$ such that for all $\alpha\neq\beta\in\Gamma$, $|\{p_\alpha, q_\alpha, p_\beta, q_\beta\}|=4$, and for all $(p,q)\in \widetilde{M}$, $d(p,q)=1$. Then $\sna(M)$ contains $\ell_1$ isometrically. 
\end{proposition}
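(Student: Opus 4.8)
The plan is to imitate the proof of Theorem~\ref{theorem:ell1-tents-1}, but since the separation hypothesis $d(p_\alpha,p_\beta)\geq d(p_\alpha,q_\alpha)+d(p_\beta,q_\beta)$ fails for the discrete metric (it would read $1\geq 2$), the tent functions $f_\gamma$ must be replaced by two-valued functions adapted to the pair structure. First I would invoke the cardinality hypothesis: since $|\Gamma|=2^{\aleph_0}=|\{-1,1\}^{\bbn}|$, fix a bijection $\gamma\mapsto s^{(\gamma)}=(s^{(\gamma)}_n)_n$ between $\Gamma$ and the set of all sign sequences $\{-1,1\}^{\bbn}$. The guiding idea, exactly as in Theorem~\ref{theorem:ell1-tents-1}, is that encoding every sign pattern into the index set forces simultaneous norm-attainment for every coefficient sequence $a\in\ell_1$.

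Next I would define, for each $n\in\bbn$, the function $e_n\colon M\to\bbr$ by $e_n(p_\gamma)=\tfrac12 s^{(\gamma)}_n$ and $e_n(q_\gamma)=-\tfrac12 s^{(\gamma)}_n$ for every $\gamma\in\Gamma$; this is well defined because the assumption $|\{p_\alpha,q_\alpha,p_\beta,q_\beta\}|=4$ for $\alpha\neq\beta$ means the pairs $\{p_\gamma,q_\gamma\}$ are pairwise disjoint, so each point of $M$ carries a unique value. Because the metric is the discrete one, the Lipschitz norm of any $g\in\lip(M)$ equals its oscillation $\sup_{p}g(p)-\inf_{p}g(p)$, so in particular $\|e_n\|=1$. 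The central computation is then the following: given $a=(a_n)_n\in\ell_1$, the series $\sum_n a_n s^{(\gamma)}_n$ converges absolutely (being dominated by $\sum_n|a_n|$), so $f:=\sum_{n}a_ne_n$ is well defined with $f(p_\gamma)=\tfrac12\sum_n a_n s^{(\gamma)}_n$ and $f(q_\gamma)=-\tfrac12\sum_n a_n s^{(\gamma)}_n$. Since $|\sum_n a_n s^{(\gamma)}_n|\leq\|a\|$ for every $\gamma$, the range of $f$ is contained in $[-\tfrac12\|a\|,\tfrac12\|a\|]$, whence $\|f\|=\sup f-\inf f\leq\|a\|$.

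To obtain the reverse inequality and, simultaneously, strong norm-attainment, I would choose the sign sequence $b$ with $b_n=\sign(a_n)$ whenever $a_n\neq0$ (and $b_n=1$ otherwise) and let $\gamma_0\in\Gamma$ be the index with $s^{(\gamma_0)}=b$. Then $\sum_n a_n s^{(\gamma_0)}_n=\sum_n|a_n|=\|a\|$, so $f(p_{\gamma_0})=\tfrac12\|a\|=\sup f$ and $f(q_{\gamma_0})=-\tfrac12\|a\|=\inf f$; hence $|S(f,p_{\gamma_0},q_{\gamma_0})|=|f(p_{\gamma_0})-f(q_{\gamma_0})|=\|a\|=\|f\|$, which gives both $\|f\|=\|a\|$ and $f\in\sna(M)$ at once. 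The only bookkeeping point is the base point: replacing each $e_n$ by $e_n-e_n(0)$ (a constant shift, again with absolutely convergent defining series at $0$) lands us inside $\lip(M)$ without changing oscillations or the norm-attainment behaviour, by the base-point invariance recorded in the introduction. Consequently the map $a\mapsto\sum_n a_n\bigl(e_n-e_n(0)\bigr)$ is a linear isometry of $\ell_1$ onto a closed subspace of $\lip(M)$ whose nonzero elements all lie in $\sna(M)$, which is the desired isometric copy.

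I expect the main difficulty to be conceptual rather than computational: recognizing that, with the tent construction unavailable, the discrete structure can still be exploited through the oscillation formula for the norm together with the two-point functions $\pm\tfrac12 s^{(\gamma)}_n$, and that the role of the $2^{\aleph_0}$ pairwise-disjoint pairs is precisely to guarantee that the maximizing sign pattern $\sign(a)$ is realized by some single index $\gamma_0$ for every $a\in\ell_1$ simultaneously. Once this design is in place, the remaining steps are the routine verification that $\|f\|$ equals $\|a\|$ via the oscillation and the identification of the attaining pair, as sketched above.
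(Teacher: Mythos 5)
Your proposal is correct and follows essentially the same route as the paper: the same two-valued functions $e_n$ taking values $\pm\tfrac12 s^{(\gamma)}_n$ on the pair $\{p_\gamma,q_\gamma\}$, the same use of the bijection between $\Gamma$ and $\{-1,1\}^{\bbn}$, and the same choice of $\gamma_0$ realizing the sign pattern of $a$ to get strong attainment at $(p_{\gamma_0},q_{\gamma_0})$. Your explicit oscillation formula for the norm and the basepoint normalization $e_n-e_n(0)$ are minor additions in rigor over the paper's version, not a different argument.
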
 

\begin{proof}
Given $\Gamma$, let $S=\{s^{(\gamma)}\}_{\gamma\in\Gamma}$ be the set of all sequences of signs $s^{(\gamma)}:=(s^{(\gamma)}_1, s^{(\gamma)}_2,\ldots)$ such that $s^{(\gamma)}_j\in\{-1,1\}$ for all $j\in\bbn$.  
For each $\gamma\in\Gamma$, let $f_\gamma:M\rightarrow \mathbb{R}$ be the Lipschitz function defined as
\[
f_\gamma (p_\gamma) = 1/2, \quad  f_\gamma(q_\gamma) = -1/2, \quad f_\gamma(p) = 0 \text{ if } x \in M \setminus \{p_\gamma, q_\gamma\}. 
\]

For each $n\in\mathbb{N}$, define the Lipschitz function $e_n:M\rightarrow \mathbb{R}$ as
$$e_n(p):=\begin{cases}
s^{(\gamma)}_n f_\gamma(p),\quad &\text{if $p\in B(p_\gamma, d(p_\gamma, q_\gamma))$ for some $\gamma\in\Gamma$}\\
0,\quad &\text{else.}
\end{cases}$$

It is not difficult to check that $\|e_n\|=1$ for every $n \in \mathbb{N}$. 
Let $a=(a_n)_n \in \ell_1$ and   $f:=\sum_{n=1}^{\infty}a_n e_n$. Then $\|f\|\leq \|a\|$. We will now show that $f\in\sna(M)$ with $\|f\|=\|a\|$. Indeed, let $(b_n)_n$ be the sequence defined as follows: for each $n\in\bbn$, if $a_n=0$, then $b_n=1$, and else, $b_n=\sign(a_n)$. Now, take $\gamma_0\in\Gamma$ be such that $s^{(\gamma_0)}=b$. Then, $S(f, p_{\gamma_0}, q_{\gamma_0})= \|a\|$. 
\end{proof}

Next, we discuss some necessary conditions for $M$ in order to satisfy that $\sna(M)$ contains $\ell_1$ isometrically, which complement \cite[Theorem 2]{KR22}. The following useful lemma is geometrically clear:

\begin{lemma}\label{lemma:ell1-1}
Let $M$ be a pointed metric space such that $\ell_1$ is isometrically contained in $\sna(M)$ with a basis $\{g_n\}_n\subseteq \sna(M)$ isometrically equivalent to the canonical basis of $\ell_1$. Let $(a_n)_n \in\ell_1$, $I=\{n\in\mathbb{N}:\, a_n\neq 0\}$ be given, and let $f:=\sum_{n=1}^{\infty} a_n g_n \in \sna(M)$. Then, we have the following:
\begin{enumerate}[label=\normalfont{(\alph*)}]
\itemsep0.3em
\item\label{lemma:ell1-1_a} If $S(f, p, q)=\|f\|$ for some $(p,q)\in\widetilde{M}$, then $g_n$ satisfies that $S(g_n, p, q) = \sign(a_n)$ for each $n \in I$. The converse is also true. 
\item\label{lemma:ell1-1_b} Suppose that $a_i \neq 0$ and $a_j \neq 0$ for some $(i,j) \in \widetilde{\mathbb{N}}$. Let $(b_n)_n \in \ell_1$ be given by $b_j =-a_j$ and $b_n=a_n$ for all $n\in\bbn\backslash\{j\}$. If $f$ strongly attains its norm at some $(p,q) \in \widetilde{M}$, then $g := \sum_{n=1}^\infty b_ng_n$ does not strongly attain its norm at the pair $(p,q)$. 
\end{enumerate}
\end{lemma}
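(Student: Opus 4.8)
The plan is to exploit the defining property of an isometric $\ell_1$-basis, namely that $\|\sum_n c_n g_n\| = \sum_n |c_n|$ for every $(c_n)_n \in \ell_1$, together with the elementary slope bound $|S(g_n,p,q)| \le \|g_n\| = 1$. Both parts will then fall out of analyzing when the triangle-type inequality $\sum_n a_n S(g_n,p,q) \le \sum_n |a_n|$ is saturated. I would first record the bookkeeping that makes the slopes behave additively: since $(a_n)_n \in \ell_1$, the partial sums of $\sum_n a_n g_n$ are Cauchy in $\lip(M)$, so $f$ is well defined with $\|f\| = \sum_n |a_n|$, and because evaluation is continuous (as $|h(p)| \le \|h\|\, d(p,0)$) we get pointwise convergence $f(p) = \sum_n a_n g_n(p)$. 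Consequently $S(f,p,q) = \sum_n a_n S(g_n,p,q)$, the series converging absolutely thanks to $|S(g_n,p,q)| \le 1$.

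For part \ref{lemma:ell1-1_a}, the forward implication comes from the equality case: if $S(f,p,q) = \|f\| = \sum_n |a_n|$, then chaining $\sum_n |a_n| = \sum_n a_n S(g_n,p,q) \le \sum_n |a_n|\,|S(g_n,p,q)| \le \sum_n |a_n|$ forces every term to saturate, i.e. $a_n S(g_n,p,q) = |a_n|$ for all $n$; dividing by $a_n$ for $n \in I$ yields $S(g_n,p,q) = \sign(a_n)$. The converse is a direct substitution: if $S(g_n,p,q) = \sign(a_n)$ for each $n \in I$, then $S(f,p,q) = \sum_{n \in I} a_n \sign(a_n) = \sum_n |a_n| = \|f\|$, since the coefficients outside $I$ vanish.

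For part \ref{lemma:ell1-1_b}, I would first note $\|g\| = \sum_n |b_n| = \sum_n |a_n| = \|f\|$, and reduce to the case $S(f,p,q) = \|f\|$ by replacing $(f,g)$ with $(-f,-g)$ when $S(f,p,q) = -\|f\|$ (this leaves both $|S(g,p,q)|$ and $\|g\|$ unchanged). Writing $g = f - 2a_j g_j$ gives $S(g,p,q) = S(f,p,q) - 2a_j S(g_j,p,q)$; since $j \in I$, part \ref{lemma:ell1-1_a} supplies $S(g_j,p,q) = \sign(a_j)$, so that $S(g,p,q) = \|f\| - 2|a_j| = \|g\| - 2|a_j|$. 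As $a_j \ne 0$, this is strictly below $\|g\|$, and the hypothesis that $a_i \ne 0$ for some $i \ne j$ gives $\|g\| \ge |a_i| + |a_j| > |a_j|$, so $S(g,p,q) > -\|g\|$ as well. Hence $|S(g,p,q)| < \|g\|$, i.e. $g$ does not strongly attain its norm at $(p,q)$.

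The only genuinely delicate point is the strict lower bound in part \ref{lemma:ell1-1_b}: it is precisely the assumption of two nonzero coefficients that yields $|a_j| < \|g\|$ and thus prevents $S(g,p,q)$ from reaching $-\|g\|$. Everything else is routine once the additivity of slopes and the equality case of the triangle inequality are in place, so I expect no further obstacle.
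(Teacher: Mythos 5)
Your proof is correct and follows essentially the same route as the paper: both parts rest on the additivity $S(f,p,q)=\sum_n a_n S(g_n,p,q)$ and the equality case of the triangle inequality in $\ell_1$. The only cosmetic difference is in part (b), where the paper derives a contradiction from the incompatible sign patterns of $S(g_i,p,q)$ and $S(g_j,p,q)$ forced by applying (a) to both $f$ and $g$, whereas you compute $S(g,p,q)=\|g\|-2|a_j|$ directly and check it lies strictly between $-\|g\|$ and $\|g\|$; both arguments are valid and rely on the same input.
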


\begin{proof}
(a) If $S(f, p, q)=\|f\|$, then 
\begin{align}\label{eq:chains}
& S(f, p, q)=\sum_{n\in I}|a_n| \geq \left| \sum_{n\in I} a_n S(g_n, p, q) \right| = \left| \sum_{n\in I} a_n \frac{g_n(q)-g_n(p)}{d(p, q)} \right| {=}  \left| S(f, p, q)\right|,
\end{align}
This shows that $|a_n|=|a_n S(g_n, p, q)|$; hence $g_n$ strongly attains its norm at the pair $(p, q)$ for all $n\in I$. Moreover, as we can remove some of the absolute values in \eqref{eq:chains}, each $a_n S(g_n, p, q)$ is positive. This completes the proof and all the steps we have done can be reversed.

(b) We can assume that $S(f, p, q) = \|f\|$, since otherwise we can just swap the points $q$ and $p$. By \ref{lemma:ell1-1_a}, we see $S(g_{i}, p, q) = \sign(a_i)$ and $S(g_{j}, p, q)=\sign(a_j)$. However, if $g$ strongly attains its norm at the pair $(p, q)$, then again by \ref{lemma:ell1-1_a}, we have $S(g_{i}, p, q) = \sign(a_i)$ and $S(g_{j}, p, q) = -\sign(a_j)$, or we have $S(g_{i}, p, q) = -\sign(a_i)$ and $S(g_{j}, p, q) = \sign(a_j)$. This is a contradiction. 
\end{proof}


\begin{proposition}\label{prop:uncountable_metric}
Let $M$ be a pointed metric space. If $\ell_1$ is isometrically contained in $\sna(M)$, then each basis function must attain its norm at least at uncountably many different pairs of points. In particular, the cardinality of $M$ is at least uncountable.
\end{proposition}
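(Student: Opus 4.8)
The plan is to adapt the sign-encoding argument from the proof of Theorem~\ref{theorem:negative-example-ell1}, but this time extracting information about each \emph{individual} basis function rather than only about the cardinality of $M$. Let $\{g_n\}_{n}\subseteq\sna(M)$ be isometrically equivalent to the canonical basis of $\ell_1$. For every sequence of signs $s=(s_n)_n\in\{-1,1\}^{\mathbb{N}}$ I would consider
\[
f_s:=\sum_{n=1}^\infty 2^{-n}s_n\,g_n,
\]
which is a well-defined element of $\sna(M)$ with $\|f_s\|=\sum_n 2^{-n}=1$, since the correspondence with the $\ell_1$-basis is isometric. As $f_s$ strongly attains its norm, there is a pair at which the slope has modulus $1$; reversing that pair if necessary, I may fix $(p_s,q_s)\in\widetilde{M}$ with $S(f_s,p_s,q_s)=1=\|f_s\|$.

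The key step is to apply Lemma~\ref{lemma:ell1-1}\ref{lemma:ell1-1_a} with $a_n=2^{-n}s_n$, so that the index set is $I=\mathbb{N}$. This yields $S(g_n,p_s,q_s)=\sign(2^{-n}s_n)=s_n$ for \emph{every} $n\in\mathbb{N}$. In particular $|S(g_n,p_s,q_s)|=1=\|g_n\|$, so each basis function $g_n$ strongly attains its norm at the pair $(p_s,q_s)$, and it does so simultaneously for all choices of $s$. Next I would show that the assignment $s\mapsto(p_s,q_s)$ is injective: if $s\neq t$, choose $n_0$ with $s_{n_0}\neq t_{n_0}$, and observe that $(p_s,q_s)=(p_t,q_t)$ would force $s_{n_0}=S(g_{n_0},p_s,q_s)=S(g_{n_0},p_t,q_t)=t_{n_0}$, a contradiction. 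Consequently $\{(p_s,q_s):s\in\{-1,1\}^{\mathbb{N}}\}$ consists of $2^{\aleph_0}$ distinct pairs, and by the previous observation every $g_n$ attains its norm at each of them. This proves that each basis function attains its norm at (at least) $2^{\aleph_0}$ different pairs of points, which is in particular uncountable.

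For the final cardinality assertion, these $2^{\aleph_0}$ distinct pairs all belong to $\widetilde{M}\subseteq M\times M$, so $|M\times M|\geq 2^{\aleph_0}$. Were $M$ countable, we would have $|M\times M|\leq\aleph_0<2^{\aleph_0}$, a contradiction; hence $M$ is uncountable.

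I do not anticipate a real obstacle, as the argument rests entirely on Lemma~\ref{lemma:ell1-1}\ref{lemma:ell1-1_a}. The only point requiring a little care is the normalization of the attaining pair so that the slope equals $+1$ rather than $-1$; this is precisely what makes the exact sign identities $S(g_n,p_s,q_s)=s_n$ available, and hence what drives the injectivity of $s\mapsto(p_s,q_s)$.
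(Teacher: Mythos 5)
Your proof is correct and follows essentially the same route as the paper: form $f_s=\sum_n a_n s_n g_n$ for each sign sequence $s$, normalize the strongly attaining pair, and read off $S(g_n,\cdot,\cdot)=s_n$ via Lemma~\ref{lemma:ell1-1}\ref{lemma:ell1-1_a}. The only (cosmetic) difference is that you deduce injectivity of $s\mapsto(p_s,q_s)$ directly from these sign identities, whereas the paper packages the same deduction as Lemma~\ref{lemma:ell1-1}\ref{lemma:ell1-1_b} and counts the pairs modulo the identification of $s$ with $-s$.
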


\begin{proof} 
Let $\{g_n\}_n \subseteq \sna(M)$ be isometric to the canonical basis of $\ell_1$ and fix $(a_n)_n \in \ell_1$ with $a_n>0$ for all $n\in \mathbb{N}$. 
Let $\mathcal{S}$ be the family of all sequences $(s_n)_n$ such that $s_n\in\{-1, 1\}$ for all $n\in\mathbb{N}$. 
For each $s=(s_n)_n \in \mathcal{S}$, consider
$f_s=\sum_{n=1}^{\infty} s_n a_n g_n \in \sna (M)$ and say $f_s$ attains strongly its norm at $(r_s, t_s) \in \widetilde{M}$. Without loss of generality, we assume that $S(f_s, r_s, t_s)>0$. By \ref{lemma:ell1-1_a} of Lemma \ref{lemma:ell1-1}, we then see that $S(g_n, r_s, t_s)=\sign(s_n)$ for every $n \in \mathbb{N}$. If $\{s, -s\}\neq \{\ell, -\ell\} \subset \mathcal{S}$, then \ref{lemma:ell1-1_b} of Lemma \ref{lemma:ell1-1} shows that $(r_s, t_s) \neq (r_\ell, t_\ell)$. 
\end{proof} 

Therefore, if $\ell_1$ is isometrically contained in $\sna(M)$, then $M$ must be non-separable (see \ref{S-Nec-Ell1}) and, moreover, the basis functions of $\ell_1$ must attain their norm at an uncountable amount of different places. 

As promised, we observe that none of these conditions are necessary for the set of pointwise norm-attaining Lipschitz functions.

\begin{theorem}\label{Theorem-ell1-pna-1}
There exists a countable pointed metric space $M$ such that $\pna(M)$ contains $\ell_1$ isometrically and, in addition, there exists a sequence of functions $\{f_n\}_{n=1}^{\infty}$ in $\pna(M)$ isometrically equivalent to the canonical basis of $\ell_1$ such that for each $a=(a_n)_n\in\ell_1\backslash \{0\}$, the function $g_a:=\sum_{n=1}^{\infty} a_n f_n$ attains its pointwise norm only at the point $0\in M$.
\end{theorem}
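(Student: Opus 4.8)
The plan is to realize $M$ as a weighted star whose leaves encode all finite sign patterns, with radii chosen so that the $\ell_1$-norm is attained from the root $0$ only as a \emph{strict} supremum, hence never as a genuine maximum at any single leaf. Fix a strictly decreasing sequence $\epsilon_m \downarrow 0$ (for instance $\epsilon_m = 1/m$) and set
\[
M := \{0\} \cup \bigl\{ p_\sigma :\, \sigma \in \textstyle\bigcup_{m \geq 1} \{-1,1\}^m \bigr\},
\]
which is countable. Writing $r_\sigma := 1 + \epsilon_{|\sigma|}$, I would endow $M$ with the star (tree) metric $d(0,p_\sigma) := r_\sigma$ and $d(p_\sigma, p_\tau) := r_\sigma + r_\tau$ for $\sigma \neq \tau$; this is routinely checked to be a metric. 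Finally, define $f_n \in \lip(M)$ by $f_n(0) := 0$, by $f_n(p_\sigma) := \sigma_n$ when $n \leq |\sigma|$, and by $f_n(p_\sigma) := 0$ otherwise.

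First I would record that for $a = (a_n)_n \in \ell_1$ the function $g_a := \sum_n a_n f_n$ is well defined, with $g_a(p_\sigma) = \sum_{n \leq |\sigma|} a_n \sigma_n$ (a finite sum) and $|g_a(p_\sigma)| \leq \|a\|_1$. The crucial point is that $r_\sigma > 1$ forces every slope to stay strictly below $\|a\|_1$: for root-to-leaf slopes one has $|S(g_a,0,p_\sigma)| \leq \|a\|_1 / r_\sigma < \|a\|_1$, and for leaf-to-leaf slopes $|S(g_a, p_\sigma, p_\tau)| \leq (|g_a(p_\sigma)| + |g_a(p_\tau)|)/(r_\sigma + r_\tau) < \|a\|_1$, again because $|g_a(p_\sigma)| \leq \|a\|_1 < \|a\|_1\, r_\sigma$. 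Thus $\|g_a\| \leq \|a\|_1$. For the reverse inequality, and simultaneously for the attainment at $0$, I would test against the leaf $\sigma^{(m)}$ of length $m$ agreeing with $\sign(a)$, which gives
\[
S(g_a, 0, p_{\sigma^{(m)}}) = \frac{\sum_{n \leq m} |a_n|}{1 + \epsilon_m} \xrightarrow[m \to \infty]{} \|a\|_1 ,
\]
so that $\sup_{q} |S(g_a, 0, q)| = \|a\|_1$. This shows at once that $\|g_a\| = \|a\|_1$ (so $a \mapsto g_a$ is a linear isometry and $\{f_n\}$ is isometrically the canonical basis of $\ell_1$) and that $g_a$ attains its pointwise norm at $0$.

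It then remains to prove that no leaf is an attainment point. Here I would use the uniform gap produced by $\epsilon_{|\sigma|} > 0$: for a fixed leaf $p_\sigma$, every slope out of it obeys $|S(g_a, p_\sigma, q)| \leq 2\|a\|_1/(2 + \epsilon_{|\sigma|})$, since the numerator is at most $2\|a\|_1$ while $d(p_\sigma, q) \geq r_\sigma + 1 = 2 + \epsilon_{|\sigma|}$ for a leaf $q$ (the case $q = 0$ is even better, giving $|g_a(p_\sigma)|/r_\sigma < \|a\|_1$). As $\epsilon_{|\sigma|} > 0$ is fixed, this bound is strictly below $\|a\|_1$, whence $\sup_q |S(g_a, p_\sigma, q)| < \|a\|_1 = \|g_a\|$ and $g_a$ does not attain its pointwise norm at $p_\sigma$. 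Consequently every nonzero $g_a$ attains its pointwise norm only at $0$, as desired.

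The main obstacle, and the reason for introducing the radii $r_\sigma = 1 + \epsilon_{|\sigma|}$ rather than the naive value $1$, is the \emph{finitely supported} case. With radii identically equal to $1$ one still obtains an isometric $\ell_1$ with pointwise attainment at $0$, but then for finitely supported $a$ the leaf $p_{\sign(a)}$ realizes the full norm and becomes an attainment point as well, violating the ``only at $0$'' requirement. Letting $\epsilon_m \downarrow 0$ converts the would-be maximum at the root into a genuine strict supremum while preserving a fixed positive gap at each individual leaf, which is precisely what decouples the root from the leaves; verifying that this gap persists uniformly over $q$ for every $a$, whether finitely or infinitely supported, is the only delicate bookkeeping in the argument.
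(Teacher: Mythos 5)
Your proof is correct. It follows the same underlying strategy as the paper's — encode all finite sign patterns as points of a countable space, let $f_n$ read off the $n$-th sign, and rig the geometry so that for every $a$ the value $\|a\|_1$ is approached from $0$ only as a strict supremum while every other point keeps a fixed positive gap — but the concrete realization differs. The paper keeps all distances in $\{1,2\}$ (a two-level cluster space indexed by pairs $(j,k)$) and instead dampens the function values by a factor $1-c^{-k}$, so the supremum at $0$ is approached within a single cluster as $k\to\infty$; you keep the function values exactly $\pm 1$ and instead inflate the radii to $1+\epsilon_{|\sigma|}$ on a weighted star, so the supremum is approached along patterns of growing length that agree with $\sign(a)$ on longer and longer initial segments. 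Your version is arguably a bit leaner (one point per pattern, no auxiliary index $k$, and the uniform leaf gap $2\|a\|_1/(2+\epsilon_{|\sigma|})<\|a\|_1$ is immediate), and your closing remark correctly isolates why perturbing the radii is indispensable for the ``only at $0$'' clause. The only cosmetic point to settle is the convention for $\sigma^{(m)}_n$ when $a_n=0$ (any choice of sign works, since those terms contribute nothing to $\sum_{n\leq m}a_n\sigma^{(m)}_n$); the paper sidesteps this by allowing $0$ entries in its sign patterns.
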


\begin{proof}
Let $S= \{ (r_n)_n \in \{-1, 0, 1\}^{\bbn}:\, \text{there exists } N \in \mathbb{N} \text{ such that } r_k = 0 \text{ for all } k \geq N \}$. Note that $S$ is countable and contains all the possible finite sequences of signs. Let us denote by $\{ s^{(j)} : j \in \mathbb{N} \} = S$. 
Define $M$ as a metric union of spaces as follows:
\[
M = \{0\} \cup \{p_{j, k} : j, k \in \mathbb{N} \} 
\]
endowed with the following metric:
$$d(p,q):=\begin{cases}
0,\quad &\text{if $p=q$}\\
1,\quad &\text{if }p\neq q \text{ and }0\in\{p,q\}\\
1,\quad &\text{if }p=p_{j, k},\, q=p_{j, m} \text{ for some }j\in\bbn,\, m,k\in \bbn,\, m\neq k\\
2,\quad &\text{if }p=p_{j, k},\, q=p_{i , m} \text{ for some }j, i \in\bbn,\, j\neq i,\, m,k\in \bbn.
\end{cases}$$

Fix $c>1$. For each $n\in\bbn$, define the Lipschitz function $f_n:M\rightarrow \bbr$ by
$$f_n(p)=\begin{cases}
s^{(j)}_n \left( 1-c^{-k} \right),\quad &\text{if }p=p_{j, k} \, \text{ for some }k\in\bbn,\, j\in \bbn \\
0,\quad &\text{if }p=0. 
\end{cases}$$

Let $n\in\bbn$, and $(u,v)\in \widetilde{M}$ be fixed. We will compute $|S(f_n, u, v)|$ by considering all the possible cases:
\begin{enumerate}
\item If $u=0$ and $v=p_{j, k}$ for some $j\in\bbn$ and $k\in\bbn$, then
\begin{equation}\label{eq:10k}
|S(f_n, u, v)|= |s^{(j)}_n| \left( 1-c^{-k} \right)\leq \left( 1-c^{-k} \right).
\end{equation} 
Since there exists $s^{(j)}$ for some $j\in \bbn$ such that $s^{(j)}_n\neq 0$, by letting $k \rightarrow \infty$ in \eqref{eq:10k}, we have 
$$\sup_{v\in M\backslash\{0\}} |S(f_n, 0, v)|=1.$$
\item If there exist $j\in\bbn$ and $m> k\in\bbn$ such that $u=p_{j, k}$ and $v=p_{j, m}$, then
$$|S(f_n, u, v)|= |s^{(j)}_n| \left( c^{-k}-c^{-m} \right)\leq \left( c^{-k}-c^{-m} \right)<c^{-k}.$$
In particular, for each $k\in \bbn$
$$\sup_{m\in\bbn\backslash\{k\}} |S(f_n, p_{j, k}, p_{j, m})|\leq c^{-1}.$$
\item If there exist $j,j'\in\bbn$ with $j\neq j'$ and $m \geq k \in\bbn$ such that $u=p_{j, k}$ and $v=p_{j', m}$, then
$$|S(f_n, u, v)|\leq \frac{|s^{(j)}_n|\left( 1-c^{-k} \right)+|s^{(j')}_n|\left( 1-c^{-m} \right)}{2} \leq \frac{2-c^{-k}}{2}.$$
In particular, for each $k\in\bbn$ we get
$$\sup_{j\in\bbn\backslash\{j'\};\, m\in\bbn}|S(f_n, p_{j, k}, p_{j', m})|\leq \frac{2-c^{-1}}{2}.$$
\end{enumerate}
Thus, $f_n$ attains its pointwise norm only at the point $0$ and $\|f_n\| =1$ for each $n \in \mathbb{N}$. 

Let $a=(a_n)_n\in\ell_1\backslash \{0\}$, and let $g_a:=\sum_{n=1}^{\infty} a_n f_n$. We will show that $g_a$ attains its pointwise norm at the point $0$ (and only at that point). First observe that $0$ is the only point at which $g_a$ may attain its pointwise norm: let $x_a\in M$ be such that $g_a$ attains its pointwise norm at $x_a$. Putting $I_a = \{ n \in \mathbb{N} : a_n \neq 0 \}$, 
\begin{align*}
\sum_{n\in I_a} |a_n| = \sup_{y\neq x_a} \frac{|g_a(y) - g_a(x_a)|}{d(x_a, y)} \leq \sum_{n\in I_a} \sup_{y\neq x_a} \left| a_n \frac{f_n(y) - f_n(x_a)}{d(x_a, y)}\right| \leq  \sum_{n\in I_a} |a_n|,
\end{align*}
so all these inequalities must be equalities, which implies that $f_n$ also must attain its pointwise norm at $x_a$ for each $n \in I_a$. 
This shows that $x_a$ should be $0$. 

It remains to check that $g_a$ attains its pointwise norm at $0$. 
Let $\varepsilon>0$ and choose $N \in\bbn$ such that $\sum_{n=1}^{N} |a_n|\geq \|a\|-\varepsilon$. 
Note that there exists some $j\in\bbn$ such that $s^{(j)} \in S$ is defined as follows: 
$$\begin{cases}
s^{(j)}_n=\sign(a_n),\quad &\text{if }1\leq n\leq N \quad \text{and}\quad a_n\neq 0\\
s^{(j)}_n=1,\quad &\text{if }1\leq n\leq N \quad \text{and}\quad a_n= 0\\
s^{(j)}_n=0,\quad &\text{for all }n>N.
\end{cases}$$
It follows that for each $k\in\bbn$,
$$|S(g_s, 0, x_{j, k})|=\sum_{n=1}^{N} |a_n|  \left( 1-c^{-k}\right) \geq (\|a\| - \eps)(1-c^{-k}).$$
This implies that $g_a$ attains its pointwise norm at $0$. 
\end{proof}



\section{Appendix: Other types of norm-attaining Lipschitz functions}\label{Section-NA-Lips}

In this appendix section, we study the relations between pointwise norm-attaining Lipschitz functions and several other existing Lipschitz norm-attainment concepts. We also provide some results on an isometric embedding of $\ell_\infty$ into the set of pointwise norm-attaining Lipschitz functions and the set of Lipschitz functions that attain their norm locally directionally. The results in this appendix will hopefully give a better understanding of the geometrical intuitions underlying the notion of pointwise norm-attainment. 

We start with recalling several norm-attainment concepts from \cite{CCM20, Godefroy16, KMS16}. 

\begin{definition}\label{def:various_definition}
    Let $M$ be a pointed metric space, and let $X$ and $Y$ be real Banach spaces.
\begin{itemize}
\item We say that $f\in\lip(M, Y)$ \textit{attains its norm toward $z\in \|f\| S_Y$} if there exists $\{(p_n, q_n)\}_{n=1}^{\infty}\subset \widetilde{M}$ such that $S(f,p_n,q_n) \longrightarrow z$.
The set of Lipschitz functions $f\in\lip(M, Y)$ that attain their norm toward some vector in $Y$ is denoted by $\lipa(M, Y)$. \label{def:lipa}
\item We say that $f\in\lip(X, Y)$ \textit{attains its norm directionally in the direction $u\in S_X$ toward $z\in \|f\| S_Y$} if there exists $\{(p_n, q_n)\}_{n=1}^{\infty}\subset \widetilde{X}$ such that
$$
S(f,p_n,q_n)\longrightarrow z\quad \text{and}\quad \frac{p_n-q_n}{\|p_n - q_n \|}\longrightarrow u.
$$
The set of Lipschitz functions $f\in\lip(X, Y)$ that attain their norm directionally toward some vector in $Y$ is denoted by $\dira(X, Y)$.
\item We say that $f\in\lip(X, Y)$ \textit{attains its norm locally directionally at a point $\overline{x}$ in the direction $u\in S_X$ toward $z\in \|f\| S_Y$} if there exists $\{(p_n, q_n)\}_{n=1}^{\infty}\subset \widetilde{X}$ such that
$$
S(f,p_n,q_n)\longrightarrow z,\quad \frac{p_n-q_n}{\|p_n-q_n\|}\longrightarrow u\quad \text{and}\quad p_n,q_n\longrightarrow \overline{x}.
$$
The set of Lipschitz functions $f\in\lip(X, Y)$ that attain their norm locally directionally toward some vector in $Y$ is denoted by $\ldira(X, Y)$.
\item We say that $f\in\lip(X, Y)$ \textit{attains its norm at a point $x\in X$ through a derivative in the direction $e\in S_X$} if 
$$
f'(x,e):=\lim_{t\to 0}\frac{f(x+te)-f(x)}{t}\in Y\text{ exists and } \quad\|f'(x,e)\|=\|f\|.
$$
The set of Lipschitz functions $f\in\lip(X, Y)$ that attain their norm at a point through a derivative in a direction $e\in S_X$ is denoted by $\der(X, Y)$.
\label{def:der}
\end{itemize}
\end{definition}

The following relations are known to hold for any infinite metric space $M$ and any real Banach spaces $X$ and $Y$ (see \cite{Choi23,CCM20}): 
\begin{enumerate}[label=(R\arabic*)]
\itemsep0.3em
\item $\sna(M,Y)\subset \lipa(M, Y)$ and $\sna(X,Y)\subset\dira(X,Y)$;
\item $\der(X, Y)\subset\ldira(X, Y)\subset\dira(X, Y)\subset\lipa(X, Y)$;
\item if $Y$ has the Radon-Nikod\'ym property, then $\sna(X, Y)\subset \der(X, Y)$;
\item arguing by compactness, if $Y$ is finite-dimensional, then $\lipa(M,Y)=\lip(M,Y)$ and, if moreover $X$ is finite-dimensional, then $\dira(X, Y)=\lip(X, Y)$. \label{R_compact}
\end{enumerate}


By definition, it is clear that if $f \in \D(X,Y)$ with a point $x \in X$ and in the direction $e \in S_X$, then $f$ attains its pointwise norm at the point $x$. That is, $\D(X,Y) \subseteq \pna (X,Y)$. However, it is not difficult to observe that this inclusion is strict in general. Moreover, we have the following.

\begin{example}\label{ex:der+}
There exists a function $f\in\Lip (\mathbb{R})$ which attains its pointwise norm at $0$ but such that its left and right derivatives at $0$ do not exist. 
\end{example}

\begin{proof}
We shall inductively construct sequences $\{s_n\}_n$ and $\{t_n\}_n$ of points in $\mathbb{R}^2$ as follows:
\begin{itemize}
\itemsep0.3em
\item $t_1 := (t_1 (1), t_1 (2)) := (1, 1/2)\,$ and $s_1 := \left( {t_1(1)} (1/2 + 1/2^3) , t_1(2) \right)$; 
\item $f_1$ be the line with slope $1-1/2^2$ passing through $s_1$.
\end{itemize} 
Suppose that we have constructed the points $s_k$ and $t_k$, and the line $f_{k}$ for $k=1,\ldots, n$ with $n \in \mathbb{N}$. Then 
\begin{itemize}
\itemsep0.3em
\item $t_{n+1} = (t_{n+1} (1), t_{n+1} (2))$ be the intersection point of the lines $f_n$ and $y=x/2$; 
\item $s_{n+1} := \left( {t_{n+1}(1)} (1/2 + 1/2^{n+1} ) , t_{n+1}(2) \right)$; 
\item $f_{n+1}$ be the line with slope $1-{1}/{2^{n+2}}$ passing through $s_{n+1}$. 
\end{itemize} 

Now, let $f:\mathbb{R}\rightarrow\mathbb{R}$ be the odd function with $f(0)=0$ such that the graph of $f |_{\mathbb{R}^+}$ is the union of line segments $[s_n, t_n]$ and $[s_n, t_{n+1}]$ with $n \in \mathbb{N}$ in $\mathbb{R}^2$ and $f(x)=1/2$ for $x\in[1, +\infty[$. Then $\|f\| =1$, $S(f,0,t_n) \rightarrow 1/2$, and $S(f, 0, s_n) \rightarrow 1$. It is clear that $f$ attains its pointwise norm at $0$, but the left and right derivatives of $f$ do not exist at $0$. 
\end{proof}

\vspace{0.3em}

\begin{center}
\begin{figure}[H]
\centering
\includegraphics[scale=6]{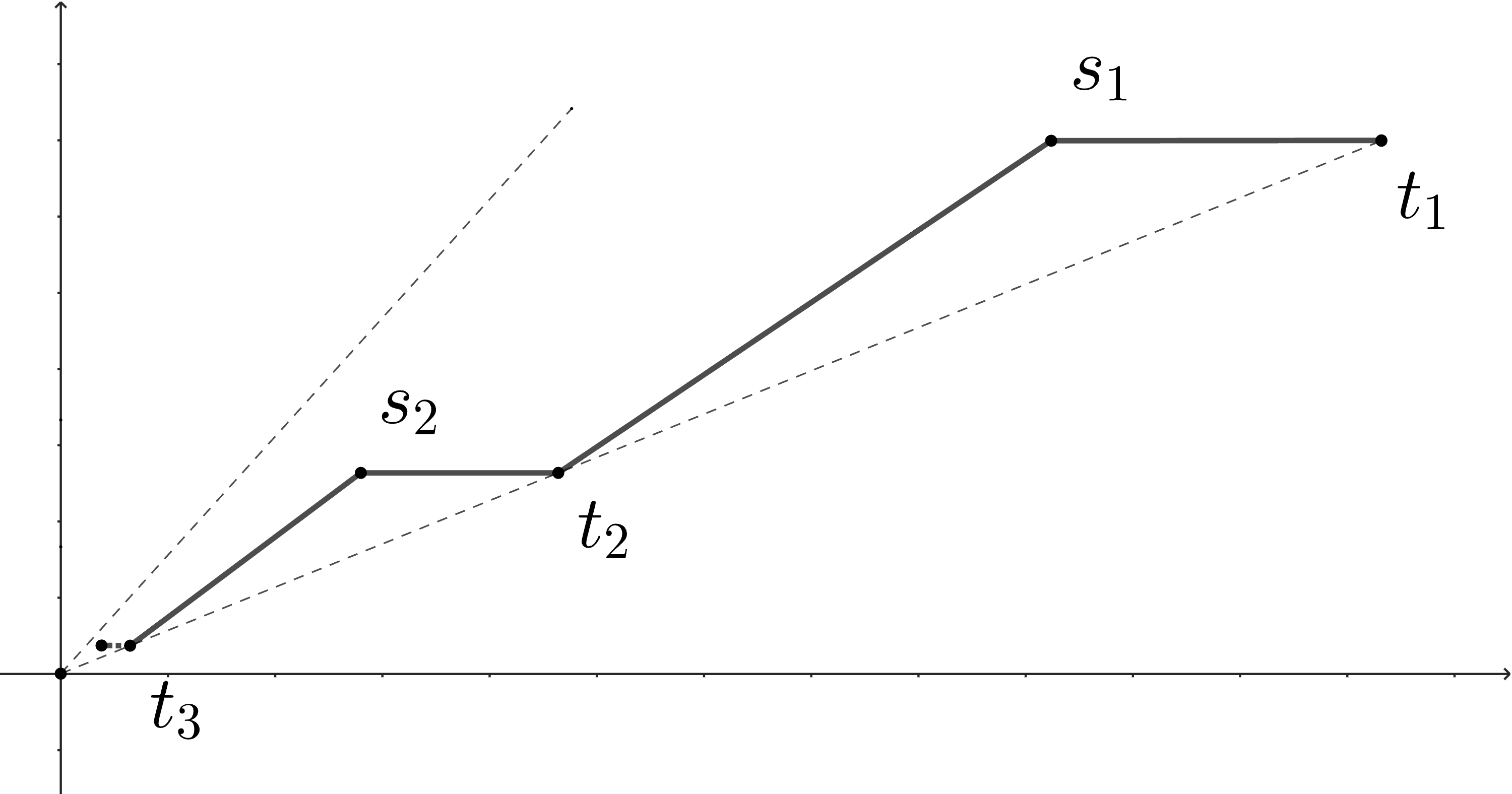}
\caption{The graph over $\bbr^+$ of the function $f \in \Lip (\mathbb{R})$ in Example \ref{ex:der+}}     \label{figure2}
\end{figure}
\end{center}
\vspace{-1em}

Next, we move to a relation between the sets $\pna(\bbr)$ and $\ldira(\bbr)$. Note that both norm-attainment notions occur towards a particular point $p\in \bbr$, and both notions are strictly weaker than $\der(\bbr)$, so it is natural to wonder if there is some relation between them. However, the following examples illustrate that in general, neither set belongs to the other.

\begin{example}\label{ex:ldira-not-pna}
$\ldira\rr$ is not contained in $\pna\rr$. Indeed, consider the function $g \in \Lip ([0,1])$ given in Proposition \ref{thm:ell-infty-in-pna-01} (see Figure \ref{figure}). 
Extend the function $g$ to $\mathbb{R}$ by setting $g(x)=0$ for every $x \not\in [0,1]$. Arguing as in Proposition \ref{thm:ell-infty-in-pna-01}, we deduce that the function $g$ cannot belong to $\pna \rr$. However, the function $g$ belongs to $\ldira \rr$, attaining its norm only towards the point $0$. 
\end{example}

\begin{example}
$\pna(\bbr)$ is not contained in $\ldira(\bbr)$. Indeed, consider the Lipschitz function $f$ on $\mathbb{R}$ defined by $f(x) = \frac{x^2}{|x|+2}  \,\, \text{ for every } x \in \mathbb{R}$. The following claims are all straightforward to check.
\begin{itemize}
\itemsep0.3em
\item $f\in\lip\rr$ with $\|f\|=1$, but $f\notin\ldira\rr$. To show this, observe that
$$f'(x)=\begin{cases}
1-\frac{4}{(x+2)^2},  \quad &\text{if $x>0$}\\
0,\quad &\text{if $x=0$}\\
-1+\frac{4}{(x-2)^2},\quad &\text{if $x<0$}.
\end{cases}$$
This shows our claim as the absolute values of slopes of $f$ are strictly bounded by $1$ around every point.
\item $f$ attains its pointwise norm at every point $p\in\bbr$. Indeed, if $p\geq 0$, then
$$\lim_{n\to\infty}S(f,p,n)=\lim_{n\to\infty}\frac{{n^2}{(n+2)^{-1}}-{p^2}{(p+2)^{-1}} }{n-p}=\lim_{n\to\infty}\frac{n}{n+2}=1.$$
Similarly, if $p<0$, we get
$\lim_{n\to\infty} |S(f, -n, p)| =1.$
\end{itemize}
\end{example}

We finish this section by presenting some $\ell_\infty$-embedding results as promised. 
Recall again from \cite[Theorem 1]{CJ17} that $\ell_\infty$ is isometrically contained in $\lip (M)$ for any infinite metric space $M$. Thus, \ref{R_compact} implies that $\dira(X)$ and $\lipa(M)$ contain an isometric copy of  $\ell_\infty$ for any real Banach space $X$ and any infinite metric space $M$. 

In the following, we shall observe that the sets $\pna (X)$ and $\ldira (X)$ contain an isometric copy of $\ell_\infty$ for any Banach space $X$. Before giving the proofs, we first present the following preliminary lemma. 


\begin{lemma}\label{lemma-extension}
Let $f\in\lip(\bbr)$ be an even function and $X$ a Banach space. If $f \in \pna(\bbr)$ or attains its norm in any sense in Definition \ref{def:various_definition}, 
then the function $\widetilde{f}\in\lip(X)$ given by $\widetilde{f}(x):=f(\|x\|)$ also attains its norm in the same sense as $f$ with $\|\widetilde{f}\|=\|f\|$. 
\end{lemma}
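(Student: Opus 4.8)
The plan is to fix a unit vector $u \in S_X$ and transport the relevant ``norm-attaining data'' for $f$ from $\bbr$ into $X$ along the ray $t \mapsto |t|\,u$, exploiting that $f$ is even so that $\widetilde{f}(|t|u) = f(|t|) = f(t)$. I would first record the norm identity $\|\widetilde f\| = \|f\|$. The bound $\|\widetilde f\| \le \|f\|$ is immediate from $\big|\|x\| - \|y\|\big| \le \|x - y\|$, since for $\|x\| \ne \|y\|$ one has
\[
\frac{|f(\|x\|) - f(\|y\|)|}{\|x-y\|} = \frac{|f(\|x\|) - f(\|y\|)|}{\big|\|x\| - \|y\|\big|}\cdot \frac{\big|\|x\|-\|y\|\big|}{\|x-y\|} \le \|f\|,
\]
and the slope is $0$ when $\|x\| = \|y\|$. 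For the reverse inequality, restricting to points $su, tu$ with $s,t \ge 0$ gives $S(\widetilde f, su, tu) = S(f, s, t)$, and since $f$ is even, replacing $s,t$ by $|s|,|t|$ and using $\big||s|-|t|\big| \le |s-t|$ shows that the supremum of $|S(f,s,t)|$ over $s,t \ge 0$ already equals $\|f\|$.

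The same two observations drive the attainment transfer. For each notion I would take the witness for $f$ and replace every scalar argument $p$ by $|p|\,u \in X$. Evenness guarantees the function values match, while the reverse triangle inequality $\big||p| - |q|\big| \le |p - q|$ ensures $|S(\widetilde f, |p|u, |q|u)| \ge |S(f,p,q)|$; combined with the universal ceiling $|S(\widetilde f,\cdot,\cdot)| \le \|\widetilde f\| = \|f\|$, any sequence (or point) realizing $\|f\|$ for $f$ is pushed to one realizing $\|\widetilde f\|$ for $\widetilde f$ by a squeeze. Concretely, for $\pna$, if $f$ attains its pointwise norm at $p$ (WLOG $p \ge 0$) then $\widetilde f$ does so at $pu$; for $\lipa$---which is automatic here since $\lipa(X,\bbr) = \lip(X,\bbr)$ by \ref{R_compact}---and for $\dira$ and $\ldira$, given a witnessing sequence $(p_n,q_n)$ I set $x_n = |p_n|u$ and $y_n = |q_n|u$. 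The direction $\tfrac{x_n - y_n}{\|x_n - y_n\|} = \sign(|p_n| - |q_n|)\,u$ takes values in the two-point set $\{\pm u\}$, so after passing to a subsequence it is constant and hence convergent in $S_X$; for $\ldira$ the base points satisfy $x_n, y_n \to |\bar x|\,u$, supplying the required limit point.

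The delicate point---and the main obstacle---is the behaviour near $0$, where the witnessing arguments may straddle the origin. Two degeneracies must be checked. First, $|p_n| = |q_n|$ forces $p_n = -q_n$, whence evenness gives $S(f,p_n,q_n)=0$; since we only ever deal with sequences along which $|S(f,p_n,q_n)| \to \|f\| > 0$ (the case $\|f\| = 0$ being trivial), such indices are excluded for large $n$ and the pairs $(x_n,y_n)$ are genuinely distinct. Second, for $\der$ I would differentiate $\widetilde f$ at $x_0 = xu$ in the direction $u$, where $f'(x,\pm 1)$ attains $\|f\|$; because an even differentiable function has $f'(0) = 0$, the norm-attaining point $x$ must be nonzero when $\|f\| > 0$, so WLOG $x>0$ and then $\|xu + tu\| = |x + t| = x + t$ for small $t$, giving $\tfrac{d}{dt}\widetilde f(xu + tu)\big|_{t=0} = f'(x)$ with $|f'(x)| = \|f\| = \|\widetilde f\|$. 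Thus $\widetilde f$ attains its norm in the same sense as $f$ in every case.
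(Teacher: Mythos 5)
Your proof is correct. The paper actually states Lemma \ref{lemma-extension} without any proof, so there is no argument to compare against; your route --- establishing $\|\widetilde f\|=\|f\|$ by combining the reverse triangle inequality with the restriction to a ray $t\mapsto tu$, and then pushing each witnessing point or sequence forward via $p\mapsto |p|u$ --- is exactly the natural one the authors presumably had in mind, and you correctly handle the only delicate points (the degenerate pairs with $|p_n|=|q_n|$, which evenness forces to have zero slope, and the fact that $x\neq 0$ in the $\der$ case since an even function differentiable at $0$ has vanishing derivative there).
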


Let us mention that the correspondence $f \in \lip(\bbr) \mapsto \widetilde{f} \in \lip (X)$ in Lemma \ref{lemma-extension} shows, in particular, that whenever the set $\pna(\bbr)$ (respectively, $\ldira(\bbr)$) contains an isometric copy of linear space $Z$ consisting of even functions, then the set $\pna(X)$ (respectively, $\ldira(X)$) also contains $Z$ isometrically. 




\begin{proposition}
For any Banach space $X$, the set $\pna(X)$ contains an isometric copy of $\ell_\infty$ isometrically, while $\pna(X)\neq \lip(X)$.
\end{proposition}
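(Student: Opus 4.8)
The plan is to transfer the one–dimensional results of Proposition \ref{thm:ell-infty-in-pna-01} to $X$ by means of the even–function correspondence $f\mapsto\widetilde f$, $\widetilde f(x)=f(\|x\|)$, of Lemma \ref{lemma-extension} and the remark following it.

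For the isometric embedding of $\ell_\infty$, I would first produce such a copy inside $\pna(\bbr)$ \emph{using only even functions}, after which the remark after Lemma \ref{lemma-extension} immediately yields the corresponding copy in $\pna(X)$. To this end I would symmetrize the tent functions $\{f_n\}_{n=1}^\infty$ of Proposition \ref{thm:ell-infty-in-pna-01}, defining the even functions $g_n(x):=f_n(|x|)$ on $\bbr$; these lie in $\lip(\bbr)$, vanish at $0$, and have pairwise disjoint supports symmetric about $0$. For $a=(a_n)_n\in\ell_\infty$ and $g:=\sum_n a_n g_n$ I would verify, exactly as in Proposition \ref{thm:ell-infty-in-pna-01}, that $\|g\|=\|a\|$ and $g\in\pna(\bbr)$: inside the support of $g_n$ the slope equals $\pm a_n$; for a pair of points lying in different supports the inequality $f_n(p)\le p$ (valid since each $f_n$ is $1$–Lipschitz and vanishes at $0$) gives $|S(g,p,q)|\le\|a\|$, the only new case being $p>0>q$, which is handled by writing $p-q=p+|q|$; and if $\sup_n|a_n|$ is not attained, the slopes $S(g,0,c_n)$ from $0$ to the peaks $c_n$ of $g_n$ tend to $\|a\|$, so $g$ attains its pointwise norm at $0$. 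Thus $\overline{\operatorname{span}}\{g_n\}$ is an isometric copy of $\ell_\infty$ in $\pna(\bbr)$ consisting of even functions.

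For the strict inclusion I would take $g\in\lip(\bbr)$ to be the even extension of the cone function of Proposition \ref{thm:ell-infty-in-pna-01} (so $g(t)=g(|t|)$, $\|g\|=1$) and set $\widetilde g(x):=g(\|x\|)$. Since $g$ is even and belongs to $\ldira(\bbr)$, Lemma \ref{lemma-extension} gives $\|\widetilde g\|=\|g\|=1$. The point is then the elementary bound $\|x-y\|\ge\bigl|\,\|x\|-\|y\|\,\bigr|$, which yields
\[
|S(\widetilde g,x,y)|=\frac{|g(\|y\|)-g(\|x\|)|}{\|x-y\|}\le |S(g,\|x\|,\|y\|)|
\]
for all $x\neq y$ (the right-hand side being read as $0$ when $\|x\|=\|y\|$). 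Consequently $\sup_{y\neq x}|S(\widetilde g,x,y)|\le\sup_{t\neq\|x\|}|S(g,\|x\|,t)|<1=\|\widetilde g\|$ for every $x\in X$, where the last strict inequality is precisely the fact that $g\notin\pna(\bbr)$ (the slopes of $g$ from any fixed point stay below $1$, as shown in Proposition \ref{thm:ell-infty-in-pna-01}, the across–$0$ slopes being controlled by $g(t)\le\eps t$). Hence $\widetilde g\in\lip(X)\setminus\pna(X)$, proving $\pna(X)\neq\lip(X)$.

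The bookkeeping of slopes is routine; the step that I expect to need the most care is checking that symmetrizing about $0$ introduces no new large slopes — the cross terms with $p>0>q$ in the $\ell_\infty$ construction and the across–$0$ slopes of the cone function in the strictness part — but in both situations the $1$–Lipschitz/vanishing–at–$0$ estimates ($f_n(p)\le p$ and $g(t)\le\eps t$) close the argument.
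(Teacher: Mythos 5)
Your proposal is correct and follows essentially the same route as the paper: symmetrize the tent functions of Proposition \ref{thm:ell-infty-in-pna-01} to even functions on $\bbr$, transfer to $X$ via Lemma \ref{lemma-extension} and the remark following it, and for the strict inclusion symmetrize the cone function and check via $\|x-y\|\ge\bigl|\,\|x\|-\|y\|\,\bigr|$ that its radial extension lies outside $\pna(X)$. The only difference is that you spell out the slope estimates (the cross terms through $0$ and the bound $\sup_{y\neq x}|S(\widetilde g,x,y)|\le\sup_{t\neq\|x\|}|S(g,\|x\|,t)|$) that the paper leaves as ``clear.''
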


\begin{proof}
Consider the functions $\{f_n\}_{n=1}^\infty \subseteq \pna ([0,1])$ constructed in Proposition \ref{thm:ell-infty-in-pna-01}.
For each $n \in \mathbb{N}$, let $h_n$ be an extension of $f_n$ to $\mathbb{R}$ defined as follows: $h_n(x)=f_n(|x|)$ for every $x \in [-1,1]$, and $h_n(x)=0$ for every $x \not\in [-1,1]$. Then $\{h_n\}_{n=1}^\infty$ is contained in $\pna (\mathbb{R})$ and is isometrically equivalent to the canonical vectors in $\ell_\infty$. Finally, Lemma \ref{lemma-extension} shows that $\pna(X)$ contains an isometric copy of $\ell_\infty$. For the second assertion, take the function $g$ considered in Example \ref{ex:ldira-not-pna}. Define a symmetrization $\widetilde{g}$ of $g$ as $\widetilde{g}(x)= g(|x|)$ and extend it to $X$. It is clear that this extension does not belong to $\pna (X)$.  
\end{proof}






\begin{proposition}
For any Banach space $X$, the set $\ldira(X)$ contains an isometric copy of $\ell_\infty$ isometrically. 
\end{proposition}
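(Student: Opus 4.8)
The plan is to follow the same route used for $\pna(X)$: reduce to the real line through Lemma \ref{lemma-extension} and then produce an isometric copy of $\ell_\infty$ inside $\ldira(\bbr)$ whose members are even functions. By the remark following Lemma \ref{lemma-extension}, it suffices to exhibit a linear subspace $Z\subseteq \ldira(\bbr)$, isometric to $\ell_\infty$ and consisting of even functions; the symmetrization $f\mapsto \widetilde f$ with $\widetilde f(x)=f(\|x\|)$ will then carry $Z$ isometrically into $\ldira(X)$, because this correspondence is linear and, by Lemma \ref{lemma-extension}, isometric and norm-attainment-preserving.

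To build $Z$, I would reuse the even extensions $h_n(x):=f_n(|x|)$ (set equal to $0$ for $|x|>1$) of the tent functions $f_n$ from Proposition \ref{thm:ell-infty-in-pna-01}, exactly as in the preceding proposition on $\pna(X)$. Since the tents have pairwise disjoint supports, the series $h:=\sum_{n=1}^\infty a_nh_n$ is well defined pointwise for every $a=(a_n)_n\in\ell_\infty$ (which is what yields a genuine copy of $\ell_\infty$, not merely $c_0$), and it is already recorded there that $\|h_n\|=1$ and $\|h\|=\|a\|$. The only new point is to verify that $h$ attains its norm in the local-directional sense rather than just pointwise.

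For this I would split into the two cases of Proposition \ref{thm:ell-infty-in-pna-01}. If $\|a\|=|a_{k_0}|$ for some $k_0$, then on one side of the $k_0$-th tent the function $h$ is affine with slope $\pm\|a\|$, hence differentiable at interior points of that segment with derivative of modulus $\|h\|$; thus $h\in\der(\bbr)\subseteq\ldira(\bbr)$. If instead $\|a\|$ is only approached, take a subsequence with $|a_{n_k}|\to\|a\|$ and, passing to a further subsequence of constant sign (possible since $\|a\|>0$), arrange $a_{n_k}\to\|a\|$ or $a_{n_k}\to-\|a\|$. On the ascending side of the $n_k$-th tent, whose location tends to $0$, choose $p_k<q_k$ inside it both tending to $0$; then $S(h,p_k,q_k)=a_{n_k}\to\pm\|a\|=:z$ with $|z|=\|h\|$, the normalized directions $(p_k-q_k)/|p_k-q_k|=-1$ are constant, and $p_k,q_k\to 0$. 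This is precisely local-directional attainment at $0$, so $h\in\ldira(\bbr)$ in either case.

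The main obstacle is this second case: local-directional attainment demands that the slopes converge to a single value $z$ with $|z|=\|h\|$ and that the approximating pairs converge to one common base point, not merely that $|S(h,p_k,q_k)|\to\|h\|$. Both requirements are resolved by the geometry of the construction, since all tents share the accumulation point $0$ and passing to a subsequence of constant sign turns convergence of the absolute values into convergence of the slopes themselves. With $Z:=\{\sum_n a_nh_n:\,a\in\ell_\infty\}\subseteq\ldira(\bbr)$ thus established as an isometric copy of $\ell_\infty$ by even functions, the remark after Lemma \ref{lemma-extension} yields an isometric copy of $\ell_\infty$ in $\ldira(X)$.
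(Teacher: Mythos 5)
Your proof is correct, and while it shares the paper's overall frame --- produce an isometric copy of $\ell_\infty$ inside $\ldira(\bbr)$ consisting of even functions and transfer it to $\ldira(X)$ via Lemma \ref{lemma-extension} and the remark following it --- the core construction is genuinely different. The paper starts from an \emph{arbitrary} isometric copy of $\ell_\infty$ in $\lip([0,1])$ (the one from \cite[Theorem 5]{CJ17}), extends each function evenly and constantly outside $[-1,1]$, and then argues by compactness of $[0,1]$: any sequence of pairs whose slopes tend to $\pm\|f\|$ has a subsequence with $p_n\to p$ and $q_n\to q$, and either $p\neq q$ (strong attainment, hence affinity with extremal slope on a segment of $\bbr$, hence membership in $\der(\bbr)\subseteq\ldira(\bbr)$) or $p=q$ (local directional attainment by definition). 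You instead reuse the explicit tent functions of Proposition \ref{thm:ell-infty-in-pna-01} and verify local directional attainment by hand, splitting on whether $\sup_n|a_n|$ is attained. Your case analysis is sound: the disjointly supported tents accumulate at $0$, which supplies the common base point, and passing to a constant-sign subsequence upgrades $|S(h,p_k,q_k)|\to\|h\|$ to genuine convergence of the slopes to a single $z$ with $|z|=\|h\|$, which is exactly what the definition of $\ldira$ demands; in the attained case $h$ is affine with slope $\pm\|a\|$ on a segment, so it even lies in $\der(\bbr)$. The paper's route is slightly more general (it needs no information about where the norm is approached), while yours is more concrete and exhibits the attainment point and direction explicitly. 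The one step you leave implicit is that $\bigl\|\sum_n a_nh_n\bigr\|=\|a\|$ survives the even extension to all of $\bbr$, i.e.\ that slopes taken across $0$ or involving points outside $[-1,1]$ do not exceed $\|a\|$; this is the analogue of the paper's display \eqref{eq:g_a}, is routine, and is not a genuine gap.
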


\begin{proof}
Let $Y\subset \lip([0,1])$ be isometrically isomorphic to $\ell_\infty$ (as in \cite[Theorem 5]{CJ17}). For each $a\in\ell_\infty$, let $f_a\in Y$ be its corresponding function, so $\|f_a\|=\|a\|$. Define $g_a\in \lip(\bbr)$ as
$$g_a(x):=\begin{cases}
f_a(|x|),\quad &\text{if } |x| \leq 1 \\
f_a(1),\quad &\text{if } |x| \geq 1
\end{cases}$$
Note that 
\begin{equation}\label{eq:g_a}
\sup\{|S(g_a, x, y)|:\, (x,y)\in\widetilde{\bbr}\}=\sup\{|S(g_a, x, y)|:\, (x,y)\in\widetilde{[0,1]}\}
\end{equation} 
since if $x<0$, $0<y<1$, and $z>1$, the following inequalities clearly hold
$$\frac{|g_a(y)-g_a (x)|}{d(x,y)}\leq \frac{|g_a (y)-g_a (0)|}{d(0,y)} \quad \text{and}\quad \frac{|g_a (z)-g_a (y)|}{d(y,z)}\leq \frac{|g_a (1)-g_a (y)|}{d(y,1)}.$$

Now, observe that the mapping $\psi: Y\rightarrow  \lip(\bbr)$ given by $\psi(a):= g_a$ for all $a\in\ell_\infty$ is a linear isometric isomorphism, and every mapping in $\psi(Y)$ is even. We claim that $\psi (Y) \subseteq \ldira(\mathbb{R})$. Let $f\in\psi(Y)$ be given. By \eqref{eq:g_a} we can take two sequences of points $\{p_n\}_n$ and $\{q_n\}_n$ in [0,1] that converge to some points $p$ and $q$ respectively, such that 
$$S(f,p_n,q_n)\rightarrow z\in \{-\|f\|, \|f\|\}.$$
If $p\neq q$, then $f$ strongly attains its norm at the pair $(p, q)$; hence $f \in \ldira(\mathbb{R})$. 
And if $p=q$, $f$ attains its norm locally directionally at the point $p$ by definition. Therefore $\ell_\infty$ is isometrically contained in $\ldira(\bbr)$ consisting only of even functions, and so, it is isometrically contained in $\ldira(X)$ for every Banach space $X$ by Lemma \ref{lemma-extension}.
\end{proof}





\textbf{Acknowledgement}. The authors are grateful to Ram\'on J. Aliaga, Miguel Mart\'{\i}n, and Andr\'es Quilis for fruitful conversations on the topic of the paper. The second author was supported by a KIAS Individual Grant (MG086601) at Korea Institute for Advanced Study. 
The third and fourth authors are supported by Basic Science Research Program, National Research Foundation of Korea (NRF), Ministry of Education, Science and Technology [NRF-2020R1A2C1A01010377].

\end{document}